\definecolor{frenchblue}{rgb}{0.0, 0.45, 0.73}
\definecolor{electricyellow}{rgb}{1.0, 1.0, 0.0}
 \definecolor{davy\'sgrey}{rgb}{0.33, 0.33, 0.33}
	\definecolor{floralwhite}{rgb}{1.0, 0.98, 0.94}
	\definecolor{honeydew}{rgb}{0.94, 1.0, 0.94}
\newsavebox{\pullback}
\sbox\pullback{%
\begin{tikzpicture}%
\draw (0,0) -- (1ex,0ex);%
\draw (1ex,0ex) -- (1ex,1ex);%
\end{tikzpicture}}
\tikzset{
  dotted/.style={pattern=dots,pattern color=#1},
  dotted/.default=black
}
\tikzset{
  fdotted/.style={pattern=crosshatch dots,pattern color=#1},
  fdotted/.default=black
}
\tikzset{
  scopedlines/.style={pattern=north east lines,pattern color=#1},
  scopedlines/.default=black
}
\tikzset{
  hrlines/.style={pattern=horizontal lines,pattern color=#1},
  hrlines/.default=black
}
\newcommand*{\DashedArrow}[1][]{\mathbin{\tikz [baseline=-0.25ex,-latex, dashed,#1] \draw [#1] (0pt,0.5ex) -- (1.3em,0.5ex);}}
\def\P{\ensuremath{\mathbf{P}}}
\def\R{\ensuremath{\mathbf{R}}}
\def\Z{\ensuremath{\mathbf{Z}}}
\def\codim{\mathop{\mathrm{codim}}\nolimits}
\def\Cone{\mathop{\mathrm{cone}}\nolimits}
\def\Nef{\mathop{\mathrm{Nef}}\nolimits}
\def\Pic{\mathop{\mathrm{Pic}}\nolimits}
\def\Stab{\mathop{\mathrm{Stab}}\nolimits}
\DeclareMathSymbol\bbDelta  \mathord{bbold}{"01}
\renewcommand{\qedsymbol}{{$\Box$}}
\def\wGL2{\ensuremath{\widetilde{\mathrm{GL}_2^+(\R)}}}
\newenvironment{proof1}{\vspace{0.2cm}\paragraph{\bf\textit{Proof  of Theorem \ref{Thm:baselociHK}, items \textit{(1)}, \textit{(2)}, \textit{(3)}:}}}{\hfill \qedsymbol \medskip}
\newenvironment{proof3}{\vspace{0.2cm}\paragraph{\bf\textit{Proof  of Theorem \ref{Thm:duality}}}}{\hfill \qedsymbol \medskip}
\newenvironment{proof2}{\vspace{0.2cm}\paragraph{\bf\textit{Proof  of Theorem \ref{Thm:baselociHK}, item \textit{(4)}:}}}{\hfill \qedsymbol \medskip}
\newtheorem{maintheorem}{Theorem}
\newtheorem{corollary}{Corollary}
\newtheorem{Thm}{Theorem}[section]
\newtheorem{Prop}[Thm]{Proposition}
\newtheorem*{ThmA*}{Theorem A}
\newtheorem*{CorB*}{Corollary B}
\newtheorem*{ThmB*}{Theorem C}
\newtheorem{Lem}[Thm]{Lemma}
\newtheorem{Cor}[Thm]{Corollary}
\newtheorem*{Quesone*}{Question 1}
\newtheorem*{Questwo*}{Question 2}
\theoremstyle{definition}
\newtheorem{Def}[Thm]{Definition}
\newtheorem*{Def*}{Definition}
\newtheorem*{Rem*}{Remark}
\newtheorem{Rem}[Thm]{Remark}
\newtheorem{Ex}[Thm]{Example}
\newtheoremstyle{italicsname}
 {3pt}
 {3pt}
 {\itshape}
 {}
 {\itshape}
 {.}
 {.5em}
 {\thmname{#1}\thmnumber{\@ifnotempty{#1}{ }#2}%
 \thmnote{ {\the\thm@notefont(#3)}}}
\theoremstyle{italicsname}
\numberwithin{equation}{section}
\newcommand{\mylabel}[2]{#2\def\@currentlabel{#2}\label{#1}}
\title[Asymptotic base loci on HK manifolds]{Asymptotic base loci on hyper-Kähler manifolds}
\author[Francesco Antonio Denisi, Ángel David Ríos Ortiz]{Francesco Antonio Denisi,  Ángel David Ríos Ortiz}
\address{Université de Lorraine, Institut Elie Cartan de Lorraine, F-54506 Vandœuvre-lès-Nancy Cedex, France}
\curraddr{Université Paris Cité, Bâtiment Sophie Germain, 8 Place Aurélie Nemours, F-75205 Paris, France}
\email{denisi@imj-prg.fr}
\address{Max-Planck-Institut f\"{u}r Mathematik in den Naturwissenschaften, Inselstrasse 22, 04103 Leipzig, DE}
\curraddr{Universit\'e Paris-Saclay, CNRS, Laboratoire de Math\'ematiques d'Orsay, B\^at.\ 307, 91405 Orsay, France}
\email{angel-david.rios-ortiz@universite-paris-saclay.fr}
\pgfplotsset{
compat=newest,
every axis plot/.append style={no marks,thick},
every axis/.style={
  axis lines=middle,
  width=7cm,
  height=3cm,
  }
}
\subjclass[2020]{Primary: 14J42}
\keywords{Complex Algebraic Geometry, Hyper-Kähler Manifolds, Asymptotic Base Loci}
\begin{document}

\begin{abstract} \noindent
Given a projective hyper-Kähler manifold $X$, we study the asymptotic base loci of big divisors on $X$. We provide a numerical characterization of these loci and study how they vary while moving a big divisor class in the big cone, using the divisorial Zariski decomposition, and the Beauville-Bogomolov-Fujiki form. We determine the dual of the cones of $k$-ample divisors $\mathrm{Amp}_k(X)$, for any $1\leq k \leq \mathrm{dim}(X)$, answering affirmatively (in the case of projective hyper-Kähler manifolds) a question asked by Sam Payne. We provide a decomposition for the effective cone $\mathrm{Eff}(X)$ into chambers of Mori-type, analogous to that for Mori dream spaces into Mori chambers. To conclude, we illustrate our results with several examples.
\end{abstract}

\maketitle
\section*{\bf \scshape Introduction}
A compact hyper-Kähler manifold (briefly HK manifold) is a simply connected compact Kähler manifold $X$ such that $H^0(X,\Omega^2_X)\cong \mathbf{C} \sigma$, where $\sigma$ is a holomorphic symplectic form. Compact HK manifolds form one of the three building blocks of compact Kähler manifolds with vanishing first (real) Chern class and only a few deformation classes of such objects are known to date, namely the $\mathrm{K3}^{[n]}$, $\mathrm{Kum}_n$ deformation classes, for any $n\in \mathbf{N}$ (cf.\ \cite{Beau}), and the deformation classes OG6, OG10, discovered by O'Grady (cf.\ \cite{OG1}, \cite{OG2} ). Thanks to the work \cite{Beau} of Beauville, there exists a quadratic form $q_X$ on $H^2(X,\mathbf{C})$, behaving like the intersection form on a surface. The quadratic form $q_X$ (whose explicit definition can be found, for example, in \cite[Definition 22.8]{GHJ2003}), is non-degenerate and integer-valued on $H^2(X,\mathbf{Z})$, and is known as Beauville-Bogomolov-Fujiki (BBF for short) form.
It follows that $H^2(X,\mathbf{Z}$) (resp.\ $\mathrm{Pic}(X)$) is endowed with a lattice structure. We refer the reader to \cite[Section 23]{GHJ2003}, or  \cite{Huybrechts1999}, for all the basic properties (with related proofs) of the BBF form.
\vspace{0.2cm}

Ample, nef, and semiample divisors play a central role in algebraic geometry. If $Y$ is any  normal, complex projective variety, to detect whether a Cartier divisor $D$ is ample, nef or semiample, one introduces the augmented base locus, the restricted base locus and the stable base locus of $D$, respectively, which are known as \textit{asymptotic base loci}. In symbols, we denote these by $\mathbf{B}_+(D),\mathbf{B}_-(D), \mathbf{B}(D)$, respectively. An integral Cartier divisor $D$ is ample (resp.\ nef, semiample) if and only if $\mathbf{B}_+(D)=\emptyset$ (resp.\ $\mathbf{B}_-(D)=\emptyset$, $\mathbf{B}(D)=\emptyset$). 
\vspace{0.2cm}

It has been established by Takayama (cf.\ \cite{Takayama}) that, whenever $Y$ is smooth and $K_Y\equiv 0$, given a big $\mathbf{Q}$-divisor $D$ on $Y$ (i.e.\ for any sufficiently divisible and large $k \in \mathbf{N}$ we have that $h^0(Y,\mathscr{O}_Y(kD)) $ grows like $ k^{\mathrm{dim}(Y)}$), all the irreducible components of $\mathbf{B}_+(D),\mathbf{B}_-(D), \mathbf{B}(D)$ are uniruled. 
The above result was generalized to $\mathbf{R}$-divisors, and to adjoint divisors by Boucksom, Broustet, Pacienza (cf.\ \cite[Theorem A]{BBP2013}). Moreover, in  \cite[Proof of Theorem A]{BBP2013}, one sees that for any irreducible component of the augmented (resp.\ restricted, stable) base locus of a big $\mathbf{R}$-divisor on $Y$, there exists a birational model $Y'$ of $Y$ on which the considered component can be contracted. Furthermore, \cite[Theorem A]{BBP2013} also implies that $\mathbf{B}(D)=\mathbf{B}_{-}(D)$ in this case, for any big divisor $D$.
\vspace{0.2cm}

In this paper, there are two major results. Before stating the first, we introduce some terminology.
\vspace{0.2cm}

We say that a big Cartier $\mathbf{R}$-divisor $D$ on a normal complex projective variety $Y$ is stable (resp.\ unstable) if $\mathbf{B}_+(D)=\mathbf{B}_-(D)$ (resp.\ $\mathbf{B}_-(D) \subsetneq \mathbf{B}_+(D)$). The stable divisor classes on $Y$ form a dense open subset of $\mathrm{Big}(Y)$, the big cone of $Y$ (i.e.\ the cone in $N^1(Y)_{\mathbf{R}}$ spanned by big integral divisor classes),  by \cite[Proposition 1.26]{Ein2}. Following \cite[Definition 2.8]{Bau}, we say that a real number $\lambda$ is a destabilizing number for a big, integral Cartier divisor $D$, with respect to an ample, integral Cartier divisor $A$, if $\mathbf{B}_+(D-\alpha A) \subsetneq \mathbf{B}_+(D-\beta A)$
for all rational numbers $\alpha,\beta$ with $\alpha < \lambda < \beta$. 
\vspace{0.2cm}

In general, given a big Cartier divisor $D$ on $Y$, the irreducible components of  $\mathbf{B}_+(D), \mathbf{B}_-(D)$ cannot be described using the intersections of $D$ with the curves on the variety (see Example \ref{Ex:notnumdet}). We recall that any pseudo-effective $\mathbf{R}$-divisor on a projective manifold admits a divisorial Zariski decomposition. 
\vspace{0.2cm}

In the case of a smooth projective surface $S$, the augmented and restricted base loci can be characterized by using the divisorial Zariski decomposition. In particular, Bauer, Küronya, and Szemberg prove in \cite{Bau} that if $D$ is any big divisor on $S$, $\mathbf{B}_+(D)$ is the union of the negative irreducible, reduced curves intersecting $P(D)$ trivially, while $\mathbf{B}_-(D)$ is the union of the curves supporting $N(D)$, where $P(D)$ (resp.\ $N(D)$) is the positive (resp.\ negative) part of the divisorial Zariski decomposition of $D$. Also, they show that all the destabilizing numbers are rational numbers and that $D$ is unstable if and only if there is a negative irreducible curve $C$ on $S$ such that $C \not \subset \mathrm{Supp}(N(D))$, and $P(D) \cdot C=0$. Here the point is that $P(D)$ encodes most of the positivity of $D$, and so it is possible to characterize $\mathbf{B}_+(D)$ using $P(D)$ and its intersections with the curves contained in $S$, while $\mathbf{B}_-(D)$ is determined by $N(D)$. In particular, on projective K3 surfaces, which are the projective hyper-Kähler manifolds of dimension 2, the above tells us that all the asymptotic base loci of big divisors are a finite union of smooth rational curves.
\vspace{0.2cm}

 We will say that a curve $C$ on a projective hyper-Kähler manifold $X$ intersects negatively (resp.\ trivially) a divisor $D$ and is contractible on some birational model of $X$, if there exist another projective hyper-Kähler manifold $X'$, a birational map $f\colon X \DashedArrow X'$, with $C \not \subset \mathrm{Ind}(f)$ (the indeterminacy locus of $f$), such that $f_*(D) \cdot f_*(C)<0$ (resp.\ $f_*(D) \cdot f_*(C)=0$) and $f_*(C)$ lies on an extremal face $F$ of $\overline{\mathrm{NE}(X)}$ which is contractible, where $f_*(C)$ is the strict transform of $C$ via $f$. We recall that a curve is extremal if its class in the Mori cone spans an extremal ray. We will denote the set of rational contractible curves by $\mathrm{Cont}(X)$.
 \vspace{0.2cm}

To conclude this prelude, we recall that the divisorial Zariski decomposition of pseudo-effective divisors on projective HK manifolds is characterized with respect to the BBF form (see Subsection \ref{divZardecomp}). Furthermore, as in the case of surfaces, the positivity of any pseudo-effective divisor on $X$ is encoded by the positive part of this decomposition. Hence, if $D$ is a big divisor on $X$, one may expect to find a characterization of $\mathbf{B}_+(D),\mathbf{B}_-(D),\mathbf{B}(D)$ and of the stable classes, using the positive part and the negative part of the divisorial Zariski decomposition of $D$, together with the intersections of $D$ with the rational curves contained in the variety.

\begin{maintheorem}\label{Thm:baselociHK}
Let $X$ be a projective hyper-Kähler manifold and $D$ a big $\mathbf{R}$-divisor on $X$. 
\begin{enumerate}
\item All the irreducible components of $\mathbf{B}_+(D),\mathbf{B}_-(D)=\mathbf{B}(D)$ are algebraically coisotropic. In particular, they have dimension at least $\mathrm{dim}(X)/2$ (whenever the asymptotic base loci are not empty). 

\item The divisorial irreducible components of $\mathbf{B}_+(D)$ are the prime exceptional divisors which are $q_X$-orthogonal to $P(D)$. For any irreducible component $V$ of $\mathbf{B}_+(D)$ there exist rational, contractible curves covering $V$, and
 intersecting $P(D)$ negatively, or trivially, on some birational hyper-Kähler model of $X$. 
 
 \item The divisorial irreducible components of $\mathbf{B}_-(D)$ are the prime exceptional divisors supporting $N(D)$. For any non-divisorial irreducible component of $\mathbf{B}_-(D)= \mathbf{B}(D)$ there exist rational, contractible curves covering the component, and intersecting $P(D)$ negatively on some birational hyper-Kähler model of $X$. 
  \item $P(D)$ is unstable if and only if $\mathbf{B}_+(P(D))$ contains a curve $C\in \mathrm{Cont}(X)$, such that $C \not \subset \mathrm{Ind}(f)$ for some birational map $f \colon X \DashedArrow X'$ of projective HK manifolds, with $f_*(P(D))$ nef, and $f_*(P(D)) \cdot f_*(C)=0$. Furthermore, all the destabilizing numbers are rational numbers.
  \end{enumerate}
\end{maintheorem}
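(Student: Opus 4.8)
The plan is to obtain all four items from a handful of structural facts: (i) the $q_X$-description of the divisorial Zariski decomposition $D=P(D)+N(D)$ recalled in Subsection~\ref{divZardecomp} — the components of $N(D)$ are prime exceptional divisors forming a $q_X$-negative-definite family, $P(D)$ is $q_X$-orthogonal to each of them and is modified nef; (ii) the birational geometry of projective hyper-Kähler manifolds — a birational map $f\colon X\DashedArrow X'$ between them is an isomorphism in codimension one, intertwines the symplectic forms, and induces a $q_X$-isometry on $N^1(X)_{\mathbf{R}}$; a big class is modified nef exactly when its strict transform is nef on some birational hyper-Kähler model; and a nef and big class is semiample because $K\equiv0$; (iii) Nakamaye's theorem, identifying $\mathbf{B}_+$ of a nef and big class with its null locus; (iv) the uniruledness, by \cite{Takayama,BBP2013}, of the components of the asymptotic base loci of a big divisor on a manifold with numerically trivial canonical class, and the existence there of a birational model contracting each such component; and (v) the Fujiki relation, coupling top self-intersection numbers to $q_X$. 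A recurring tool is that $f_*$ identifies spaces of sections of powers of line bundles, hence identifies $\mathbf{B}$, $\mathbf{B}_+$, $\mathbf{B}_-$ away from $\mathrm{Ind}(f)\cup\mathrm{Ind}(f^{-1})$, so the asymptotic base loci can be read off on whichever model is convenient.

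\emph{Items (1)--(3).} The equality $\mathbf{B}_-(D)=\mathbf{B}(D)$ is \cite[Theorem~A]{BBP2013}. The divisorial components of $\mathbf{B}_-(D)$ are exactly the components of $N(D)$, hence prime exceptional; writing $\mathbf{B}_+(D)=\mathbf{B}(D-\varepsilon A)$ for $0<\varepsilon\ll1$ and using the $q_X$-formula for the negative part, a prime exceptional divisor $E$ lies in $N(D-\varepsilon A)$ for all small $\varepsilon$ precisely when $q_X(E,P(D))=0$, which settles the divisorial assertions. For the non-divisorial parts and the coisotropy statement, pass to a birational hyper-Kähler model $f\colon X\DashedArrow X'$ on which $A:=f_*P(D)$ is nef and big; base-point-freeness gives a birational contraction $\phi\colon X'\to Z$ with $A=\phi^*H$, $H$ ample, and Nakamaye's theorem gives $\mathbf{B}_+(A)=\mathrm{Null}(A)=\mathrm{Exc}(\phi)$. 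The fibres of the crepant contraction $\phi$ are covered by rational curves spanning extremal rays of the contractible face it contracts; $\mathrm{Exc}(\phi)$ has algebraically coisotropic components by the structure theory of birational contractions of hyper-Kähler manifolds; and, by the Fujiki relation, any prime exceptional divisor $q_X$-orthogonal to $A$ is $\phi$-contracted. Transporting back through the symplectic map $f$, and adding $N(D)$ (whose components are prime exceptional, hence uniruled and coisotropic), yields items (1), (2) and (3) after checking that the curves so produced belong to $\mathrm{Cont}(X)$ and cover the relevant components; the dimension bound is the elementary fact that a coisotropic subvariety of a $2n$-dimensional symplectic manifold has dimension $\geq n$.

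\emph{Item (4).} Write $P:=P(D)$, so $N(P)=0$, and fix a birational hyper-Kähler model $f\colon X\DashedArrow X'$ attached to a cell of the chamber decomposition of $\mathrm{Big}(X)$ whose closure contains $P$; then $A:=f_*P$ is nef and big. The key claim is that $P$ is unstable if and only if $A$ is not ample. If $A$ is ample, $P$ lies in the interior of that cell and is therefore stable (cf.\ \cite{Ein2}; equivalently, $\mathbf{B}_+(P)=\mathbf{B}_-(P)$ can be checked directly on a common resolution). If $A$ is not ample, it is semiample but not ample, its contraction $\phi\colon X'\to Z$ is nontrivial, and $\mathbf{B}_+(A)=\mathrm{Exc}(\phi)\neq\emptyset$ while $\mathbf{B}_-(A)=\mathbf{B}(A)=\emptyset$; transporting through $f$ one finds $\mathbf{B}_-(P)\subsetneq\mathbf{B}_+(P)$, so $P$ is unstable. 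In the unstable case, pick a rational curve $C'$ in a fibre of $\phi$ that spans an extremal ray of the contractible face, avoids $\mathrm{Ind}(f^{-1})$, and lies in a component of $\mathrm{Exc}(\phi)$ transported to $X$; its strict transform $C$ on $X$ is then rational, contained in $\mathbf{B}_+(P)$, satisfies $C\not\subset\mathrm{Ind}(f)$, $f_*P=A$ nef, $f_*P\cdot f_*C=A\cdot C'=0$, and $f_*C=C'$ witnesses $C\in\mathrm{Cont}(X)$; this proves the ``only if'' direction. For ``if'', a curve $C$ as in the statement makes $A=f_*P$ nef with $A\cdot f_*C=0$ on the nonzero effective class $f_*C$, hence not ample, hence $P$ is unstable by the criterion above, and $C\subset\mathbf{B}_+(P)\setminus\mathbf{B}_-(P)$ because $\mathbf{B}_-(P)$ transports into $\mathrm{Ind}(f)$, which does not contain $C$. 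Finally, every destabilizing number is rational: $t\mapsto\mathbf{B}_+(D-tA)$ is constant off a finite set of rational values of $t$, since the divisorial part of $\mathbf{B}_+(D-tA)$ changes only when some $q_X(E,P(D-tA))$ changes sign ($E$ prime exceptional, $t\mapsto N(D-tA)$ piecewise-linear with rational breakpoints, the relevant intersection numbers integral), and the non-divisorial part changes only at the rational walls of the nef chambers of the birational models that the segment $D-tA$ meets.

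The two genuinely delicate points I foresee are, first, upgrading the birational contractions supplied by \cite{BBP2013} on arbitrary smooth models to contractions on \emph{hyper-Kähler} models — this is precisely what forces the curves in the asymptotic base loci into $\mathrm{Cont}(X)$, and it draws on the Minimal Model Program for hyper-Kähler manifolds and the $q_X$-description of the movable cone; and second, the transport of $\mathbf{B}$, $\mathbf{B}_+$ and $\mathbf{B}_-$ through indeterminacy loci of birational maps, where one must exclude the possibility that a component of an asymptotic base locus is hidden inside $\mathrm{Ind}(f)$ in a way that breaks the stated equivalences — this is handled using that such loci have codimension $\geq2$ and, for the stability statement, the chamber structure of $\mathrm{Big}(X)$. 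Throughout, the surface case \cite{Bau} — with ``negative curve'' replaced by ``rational contractible curve on a hyper-Kähler birational model'' — is the template.
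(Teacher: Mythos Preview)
Your approach matches the paper's closely. Items~(2)--(4) follow exactly the route taken there: run a log MMP for $(X,\epsilon P(D))$ to a model $X'$ on which $P':=f_*P(D)$ is nef and big; identify $\mathbf{B}_+(P')$ with the exceptional locus of the associated contraction (via \cite{Ein1,BCL2014}); cover the fibres by rational curves using \cite{Kaw91}; and transport the description back through $f$, tracking which curves get flipped and which survive (Propositions~\ref{Prop:B+MBM}, \ref{Prop:unstabledivisors}, \ref{Prop:destabnumbers}, Corollary~\ref{CorToThm:stablebaselocus}). For the divisorial part of $\mathbf{B}_+$, your route via $N(D-\varepsilon A)$ is not quite the paper's and would require you to control how the Boucksom--Zariski chamber of $D-\varepsilon A$ relates to that of $D$ when $D$ lies on a wall; the paper instead proves $\mathbf{B}_+(P(D))_{\mathrm{div}}=\mathrm{Null}_{q_X}(P(D))$ directly (Proposition~\ref{lem:B+positivepart}) by passing to the nef model and converting $(P')^{2n-1}\cdot E'=0$ into $q_{X'}(P',E')=0$ via the Fujiki relation, which is cleaner.

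For item~(1), your ``transport back through the symplectic map $f$'' does work, but you should spell out why coisotropy of $V$ holds at \emph{every} smooth point, including those in $\mathrm{Ind}(f)$. The fix is short once stated: for $V$ of codimension $d$ the restricted form $\sigma|_{T_pV}$ always has rank $\ge 2n-2d$ (its radical is contained in $(T_pV)^{\perp_\sigma}$, which has dimension $d$), so coisotropy (rank $=2n-2d$) is a \emph{closed} condition on $V_{\mathrm{reg}}$; since it holds on the dense open set $V_{\mathrm{reg}}\setminus\mathrm{Ind}(f)$, it holds everywhere. The paper takes a different and logically stronger route here (Proposition~\ref{Prop:B_+isalgebraicallycoisotropic}): it shows directly that $V\subset S_iX$ by transporting the MRC fibration of $V'=f_*V$ and using properness of the relevant Hilbert scheme to produce a rationally chain connected subvariety of dimension $\ge i$ through every point of $V$, then invokes Voisin's criterion (Theorem~\ref{Thm:Orbitimpliesalgebraicallycoisotropic}). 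This buys the additional information $V\subset S_iX$, which is of independent interest for the Voisin filtration on $\mathrm{CH}_0(X)$, but is not needed if all you want is algebraic coisotropy.
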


Let us state one particular consequence of item (1) of the theorem above, that should be of independent interest.

\begin{corollary}
    Let $f:X\dashrightarrow X'$ be a birational map between projective HK manifolds. Then (any irreducible component of) the indeterminacy locus of $f$ is algebraically coisotropic.
\end{corollary}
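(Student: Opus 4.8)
The plan is to realize the indeterminacy locus of $f$ as the stable base locus of a big divisor on $X$, and then to invoke item (1) of Theorem~\ref{Thm:baselociHK}. Recall that a birational map between projective hyper-Kähler manifolds is an isomorphism in codimension one, so we may fix open subsets $U\subseteq X$ and $U'\subseteq X'$, with complements of codimension $\geq 2$, such that $f$ restricts to an isomorphism $U\xrightarrow{\ \sim\ }U'$. Fix an ample divisor $A'$ on $X'$ and let $D$ be the (unique) divisor class on $X$ whose restriction to $U$ is $(f|_U)^{*}(A'|_{U'})$, i.e.\ the strict transform of $A'$. Since $X,X'$ are smooth and $X\setminus U$, $X'\setminus U'$ have codimension $\geq 2$, extension of sections across these loci yields, for every $m\geq 1$, isomorphisms $H^0(X,mD)\cong H^0(U,mD|_U)\cong H^0(U',mA'|_{U'})\cong H^0(X',mA')$ (the middle one via $f|_U$), compatible with multiplication; in particular $D$ is big.

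The key step is to prove that $\mathrm{Ind}(f)=\mathbf{B}(D)$. First I would observe that, for $m\gg 0$, the linear system $|mA'|$ is base-point free and defines a closed embedding $\iota_m\colon X'\hookrightarrow\mathbf{P}\bigl(H^0(X',mA')^{\vee}\bigr)$; using the identification of section spaces above, a direct check on the dense open $U$ shows that $\varphi_{|mD|}=\iota_m\circ f$ as rational maps on $X$. Two things follow. On the one hand, $|mD|$ has no fixed component: a fixed prime divisor would meet $U$ and would correspond via $f|_U$ to a prime divisor contained in the (empty) base locus of $|mA'|$; hence the base scheme of $|mD|$ agrees set-theoretically with the indeterminacy locus of $\varphi_{|mD|}$. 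On the other hand, since $\iota_m$ is a closed immersion, $\mathrm{Ind}(\iota_m\circ f)=\mathrm{Ind}(f)$. Combining, $\mathrm{Bs}\,|mD|=\mathrm{Ind}(f)$ for all sufficiently large and divisible $m$, and this common value is the stable base locus $\mathbf{B}(D)$.

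Therefore $\mathrm{Ind}(f)=\mathbf{B}(D)=\mathbf{B}_-(D)$ for the big divisor $D$, and item (1) of Theorem~\ref{Thm:baselociHK} shows that every irreducible component of $\mathbf{B}_-(D)$ is algebraically coisotropic, which is the assertion. The only delicate point is the equality $\mathrm{Ind}(f)=\mathbf{B}(D)$: one must check that passing from $A'$ to its strict transform $D$ does not create spurious base locus — this is precisely where the codimension-one triviality of $f$ enters, through the no-fixed-component argument — and that composing with the projective embedding $\iota_m$ does not change the indeterminacy locus. Everything else is either formal or is supplied by Theorem~\ref{Thm:baselociHK}.
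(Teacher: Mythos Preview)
Your argument is correct and follows essentially the same route the paper has in mind: the Corollary is stated as a direct consequence of item~(1) of Theorem~\ref{Thm:baselociHK}, and the mechanism linking $\mathrm{Ind}(f)$ to a base locus is exactly the one spelled out in the proof of Corollary~\ref{CorToThm:stablebaselocus} and in Remark~\ref{Rem:inducedratmap}, namely identifying $H^0(X,mD)\cong H^0(X',mA')$ for $D=f^*A'$ and concluding $\mathbf{B}(D)=\mathbf{B}_-(D)=\mathrm{Ind}(f)$. Your write-up makes this explicit and is a clean self-contained version of that derivation.
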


\begin{Rem*}
Following the notation of Theorem \ref{Thm:baselociHK}, let $D$ be any big divisor on $X$, $V$ an irreducible component of $\mathbf{B}_+(D)$ (or $\mathbf{B}_-(D)=\mathbf{B}(D)$), and $\{C_t\}_t$ rational curves covering $V$, like those in the statement. Then, the birational model over which $C_t$ can be contracted might depend on $t$. Furthermore, we provide an example showing that, in general, we cannot avoid considering the birational hyper-Kähler models of the variety (cf.\ Subsection \ref{exampleHT}).
\end{Rem*}

We now introduce some more terminology, to state the second main result of this paper.
\vspace{0.2cm}

Let $Y$ be any smooth complex projective variety.  The cone $\mathrm{Amp}_k(X)$ (where $k$ ranges from 1 to $\mathrm{dim}(Y)$) is the convex cone of divisor classes $\alpha$ on $Y$ with $\mathrm{dim}\left(\mathbf{B}_+(\alpha)\right)<k$.  It is known that $\overline{\mathrm{NE}(Y)}^{\vee}$, the dual of the Kleiman-Mori cone $\overline{\mathrm{NE}(Y)}$, is equal to the nef cone $\mathrm{Nef}(Y)=\overline{\mathrm{Amp}(Y)}=\overline{\mathrm{Amp}_1(Y)}$ (see \cite[Theorem 1.1]{Ein1} for the last equality). This classical duality result is known as "Kleiman's criterion for amplitude" (cf.\ \cite[Theorem 1.4.29]{Laz}), and it holds for any projective (non-necessarily smooth) variety. Later, Boucksom, Demailly, Păun, and Peternell proved that the dual of the pseudo-effective cone $\overline{\mathrm{Eff}(Y)}=\overline{\mathrm{Amp}_{d}(Y)}$ (where $d=\mathrm{dim}(Y)$) is equal to the cone of mobile curves $\overline{\mathrm{Mob}(Y)} \subset \overline{\mathrm{NE}(Y)}$, where $\mathrm{Mob}(Y) \subset \mathrm{NE}(Y)$ is the cone spanned by the classes of curves moving in a family covering $Y$ (cf.\ \cite[Theorem 2.2]{BDPP}). Sam Payne proved that on a complete, $\mathbf{Q}$-factorial toric variety $T$, the closure of the cone $\mathrm{Amp}_k(T)$ is dual to the cone $\mathrm{bMob}_k(T)$ of curves which are birationally $k$-mobile, i.e.\ the closure of the cone spanned by classes which on some small $\mathbf{Q}$-factorial modification $T'$ of $T$ are represented by curves sweeping out the birational transform of $k$-dimensional subvarieties of $T$ (cf.\ \cite[Theorem 2]{Payne2005}). Furthermore, Payne asked whether such a result could have been generalized to other classes of varieties. Payne's result for toric varieties was generalized by Choi (cf.\ \cite[Theorem 1.1, Corollary 1.1]{Choi2012}) to Mori dream spaces and $\mathbf{Q}$-factorial FT varieties (see \cite[Definition 4.4]{Choi2012} for the definition of FT variety). 
\vspace{0.2cm}

There are two obstructions for projective HK manifolds to be Mori dream spaces: the group of birational self maps could be infinite, and, in general, it is not known whether a nef, integral divisor which is not big is semiample. The latter condition is always satisfied for the known deformation classes of HK manifolds, hence a projective HK manifold belonging to one of the known deformation classes is a Mori dream space if and only if its group of birational self maps is finite (see \cite[Corollary 3.11]{Den2}), and Choi's result applies in these cases. Theorem \ref{Thm:duality} gives an affirmative answer to the question asked by Sam Payne (actually, as it is mentioned in Payne's article, it seems that the original question was raised by Robert Lazarsfeld and Olivier Debarre) in the case of arbitrary projective HK manifolds.
\vspace{0.2cm}

We are now ready to state the second main result of this paper.

\begin{maintheorem}\label{Thm:duality}
    Let $X$ be a projective HK manifold of dimension $2n$. Then
    \begin{equation}
        \overline{\mathrm{Amp}_k(X)}=\mathrm{bMob}_k(X)^{\vee},
    \end{equation}
    for any $1<k<2n$, where $\mathrm{dim}(X)=2n$.
    Furthermore, if $k=1,\dots,n$, we have $\overline{\mathrm{Amp}_k(X)}=\mathrm{Nef}(X)=\overline{\mathrm{NE}(X)}^{\vee}$.
\end{maintheorem}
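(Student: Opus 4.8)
The plan is to split according to $k$. For $1\le k\le n$, the equalities $\overline{\mathrm{Amp}_k(X)}=\mathrm{Nef}(X)=\overline{\mathrm{NE}(X)}^{\vee}$ follow at once from item~(1) of Theorem~\ref{Thm:baselociHK}: a big $\mathbf{R}$-divisor $D$ has $\mathbf{B}_+(D)$ either empty or of dimension $\ge n$, while a non-big class has $\mathbf{B}_+(D)=X$, so for $k\le n$ the inequality $\dim\mathbf{B}_+(\alpha)<k$ forces $\mathbf{B}_+(\alpha)=\emptyset$; hence $\mathrm{Amp}_k(X)=\mathrm{Amp}(X)$ and Kleiman's criterion identifies its closure with $\overline{\mathrm{NE}(X)}^{\vee}$. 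For the identity $\overline{\mathrm{Amp}_k(X)}=\mathrm{bMob}_k(X)^{\vee}$ with $1<k<2n$ I would first reduce to interior points: since $\mathrm{bMob}_k(X)\subseteq\overline{\mathrm{NE}(X)}$ (via strict transforms the generating classes are honestly effective on $X$) one has $\mathrm{bMob}_k(X)^{\vee}\supseteq\mathrm{Nef}(X)$, so $\mathrm{bMob}_k(X)^{\vee}$ is a closed convex cone with non-empty interior, hence equals the closure of its interior; moreover $\mathrm{bMob}_{2n}(X)=\overline{\mathrm{Mob}(X)}=\overline{\mathrm{Eff}(X)}^{\vee}$ and $\mathrm{bMob}_{2n}(X)\subseteq\mathrm{bMob}_k(X)$, so every interior point of $\mathrm{bMob}_k(X)^{\vee}$ is big. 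It therefore suffices to prove: (a) every $\alpha\in\mathrm{Amp}_k(X)$ lies in $\mathrm{bMob}_k(X)^{\vee}$; and (b) every big $\alpha\notin\overline{\mathrm{Amp}_k(X)}$ lies outside $\mathrm{bMob}_k(X)^{\vee}$.

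For (a), as $\mathrm{bMob}_k(X)^{\vee}$ is closed it is enough to pair $\alpha\in\mathrm{Amp}_k(X)$ with a generator $\gamma$ of $\mathrm{bMob}_k(X)$. Such $\gamma$ is represented, on some birational HK model $X'$ (a small $\mathbf{Q}$-factorial modification, since $K_X$ is trivial), by a curve sweeping out the birational transform of a $k$-dimensional subvariety $W\subseteq X$; a general such curve avoids the codimension $\ge 2$ indeterminacy locus, so, taking strict transforms of the covering family back to $X$, we may assume $\gamma=[C_t]$ for curves $\{C_t\}$ sweeping out $W$ itself. As $W$ is irreducible of dimension $k>\dim\mathbf{B}_+(\alpha)$, we have $W\not\subseteq\mathbf{B}_+(\alpha)$, so a general $C_t$ is not contained in $\mathbf{B}_+(\alpha)$; writing $\alpha\equiv A+E$ with $A$ ample and $E\ge 0$ not containing $C_t$ gives $\alpha\cdot\gamma=A\cdot C_t+E\cdot C_t>0$.

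For (b), let $\alpha$ be big with $\alpha\notin\overline{\mathrm{Amp}_k(X)}$. Since $\alpha+\epsilon A$ is big and still lies in the open complement of $\overline{\mathrm{Amp}_k(X)}$ for an ample $A$ and $0<\epsilon\ll 1$, and $\mathbf{B}_+(\alpha+\epsilon A)\subseteq\mathbf{B}_-(\alpha)$, the locus $\mathbf{B}_-(\alpha)$ has a component $V$ of dimension $\ge k$. If $N(\alpha)\neq 0$, then — the $q_X$-Gram matrix of the prime exceptional divisors supporting $N(\alpha)$ being negative definite (divisorial Zariski decomposition, Subsection~\ref{divZardecomp}) — the identity $\sum a_i\,q_X(N(\alpha),E_i)=q_X(N(\alpha),N(\alpha))<0$ with all $a_i>0$ forces $q_X(N(\alpha),V)<0$ for some component $V$ of $\mathrm{Supp}(N(\alpha))$; by item~(3) of Theorem~\ref{Thm:baselociHK} this $V$ is a divisorial (hence $(2n{-}1)$-dimensional, so of dimension $\ge k$) component of $\mathbf{B}_-(\alpha)$, it is contractible on a suitable birational HK model $f\colon X\dashrightarrow X_V$, and the fibre curves $C_t$ of its contraction sweep out $f_*V$ with extremal class $\ell=[C_t]$ satisfying $D\cdot\ell=c\,q_X(D,f_*V)$ for some $c>0$; using that $f$ preserves $q_X$, $P(\cdot)$, $N(\cdot)$ and that $q_X(P(\alpha),V)=0$, we get $\alpha\cdot\ell=c\,q_X(N(\alpha),V)<0$, with $[\ell]\in\mathrm{bMob}_k(X)$ since the $C_t$ provide a $(k{-}1)$-dimensional subfamily sweeping out a $k$-dimensional subvariety of $f_*V$, the birational transform of a $k$-dimensional subvariety of $X$. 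If $N(\alpha)=0$, then $\alpha=P(\alpha)$ is movable, and non-nef because $\mathrm{Nef}(X)\subseteq\overline{\mathrm{Amp}_k(X)}$; by item~(3) of Theorem~\ref{Thm:baselociHK} a component $V$ of $\mathbf{B}_-(\alpha)$ of dimension $\ge k$ is then non-divisorial and is covered by rational contractible curves $\{C_t\}$ with $C_t\not\subseteq\mathrm{Ind}(f)$ and $f_*P(\alpha)\cdot f_*C_t<0$ for some birational HK model $f\colon X\dashrightarrow X'$; since $N(\alpha)=0$ this gives $\alpha\cdot C_t<0$, while $[C_t]\in\mathrm{bMob}_k(X)$ because on $X'$ the curve $f_*C_t$ sweeps out a $k$-dimensional subvariety of $f_*V$, the birational transform of a $k$-dimensional subvariety of $X$.

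The main obstacle is step (b): from a high-dimensional component of $\mathbf{B}_-(\alpha)$ one must extract a single curve class that is \emph{simultaneously} $k$-mobile and strictly negative against $\alpha$, and a more mobile curve is harder to keep negative — this is precisely why the divisorial and non-divisorial cases must be separated, and why one needs, respectively, the negative-definiteness of the exceptional configuration together with the identity $D\cdot\ell=c\,q_X(D,\cdot)$ for contracted extremal rays, and the strict negativity of the covering curves against $P(\alpha)$ on a birational model furnished by item~(3) of Theorem~\ref{Thm:baselociHK}. Secondary points, handled throughout by passing to strict transforms of curves disjoint from indeterminacy loci, are the compatibility of the intersection pairing and of the loci $\mathbf{B}_{\pm}$ under the small modifications implicit in the definition of $\mathrm{bMob}_k(X)$, and the fact — used to replace $\mathbf{B}_+$ by $\mathbf{B}_-$ in (b) — that $\mathrm{Amp}_k(X)$ need not be open for $k>1$.
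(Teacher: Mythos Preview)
Your overall architecture is close to the paper's, but the two arguments are organised differently and your version has a genuine gap in step~(b).

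\medskip
\textbf{Comparison.} The paper does not split into $N(\alpha)\neq 0$ and $N(\alpha)=0$. Instead it first shows $\mathrm{bMob}_{2n-1}(X)^{\vee}\subset\overline{\mathrm{Mov}(X)}$ (using that every prime exceptional divisor is contractible on a model, so its fibre-class lies in $\mathrm{bMob}_{2n-1}$), hence any $D\in\mathrm{int}\bigl(\mathrm{bMob}_k(X)^{\vee}\bigr)$ is automatically in $\mathrm{Int}(\mathrm{Mov}(X))$. It then works with $\mathbf{B}_+(D)$ directly and only needs a curve with $D\cdot\alpha\le 0$ (non-strict), which contradicts $D$ lying in the \emph{interior} of the dual cone. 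Your route---perturbing to pass from $\mathbf{B}_+$ to $\mathbf{B}_-$ and aiming for a strict inequality---is legitimate, and your $N(\alpha)\neq 0$ case is fine (indeed it reproduces the paper's argument that non-movable classes are outside $\mathrm{bMob}_{2n-1}(X)^{\vee}\supset\mathrm{bMob}_k(X)^{\vee}$). But this detour forces you to produce a \emph{strictly} negative curve, which is where the gap appears.

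\medskip
\textbf{The gap.} In the case $N(\alpha)=0$ you invoke item~(3) of Theorem~\ref{Thm:baselociHK} to obtain a \emph{single} birational model $f\colon X\dashrightarrow X'$ and a family $\{C_t\}$ covering the component $V$ of $\mathbf{B}_-(\alpha)$ with $C_t\not\subset\mathrm{Ind}(f)$ and $f_*\alpha\cdot f_*C_t<0$ for all $t$. That is stronger than what item~(3) actually asserts: the Remark immediately following Theorem~\ref{Thm:baselociHK} says explicitly that the model on which $C_t$ is contracted may depend on $t$. In particular, item~(3) as stated gives you, for each point of $V$, \emph{some} curve and \emph{some} model, but not a single numerical class on a single model whose curves sweep out a $k$-dimensional strict transform---which is exactly what you need to conclude $[C_t]\in\mathrm{bMob}_k(X)$. (There is a further inconsistency: since $V\subset\mathbf{B}_-(\alpha)=\mathrm{Ind}(f)$ for the terminal model of the MMP, your requirement $C_t\not\subset\mathrm{Ind}(f)$ is incompatible with $C_t\subset V$ unless $f$ is an \emph{intermediate} step, but then the curves at that step need not cover all of $V$.) The paper closes this by citing the \emph{proof} of Proposition~\ref{Prop:B+MBM} rather than the statement of Theorem~\ref{Thm:baselociHK}: one follows the MMP until the first step $i$ at which $V_i\subset\mathrm{Exc}(\pi_i^-)$ (or reaches the nef model), and there a single extremal ray supplies a family sweeping out $V_i$, the strict transform of $V$. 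You need to argue at this level of detail.

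\medskip
A minor point: your claim $\mathrm{bMob}_k(X)\subset\overline{\mathrm{NE}(X)}$ (``via strict transforms the generating classes are honestly effective'') is not justified---strict transforms of curves through indeterminacy loci need not preserve the numerical class---and in any case is unnecessary: non-emptiness of the interior of $\mathrm{bMob}_k(X)^{\vee}$ already follows from your inclusion~(a), since $\mathrm{Amp}(X)\subset\mathrm{Amp}_k(X)\subset\mathrm{bMob}_k(X)^{\vee}$.
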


It is worth observing that if $Y$ is any smooth, complex projective variety, a slightly weaker description for the dual cones $\overline{\mathrm{Amp}_k(Y)}^{\vee}$, and for the asymptotic base loci of pseudo-effective divisors has been provided by Brian Lehman (cf.\ \cite{Lehmann2011TheMC}). In particular, items (1) and (2) of Theorem \ref{Thm:baselociHK}, and Theorem \ref{Thm:duality}, are a strengthening of \cite[Theorem 1.5, Proposition 3.2, Corollary 3.3]{Lehmann2011TheMC} (in the case of projective HK manifolds).
\subsection*{\large \itshape Organization of the paper}

\begin{enumerate}
\item In Section \ref{Section1} we give the basic notions and results that will be used to achieve our goals.

\item In Section \ref{Section2} we carefully study the asymptotic base loci of big divisors on projective HK manifolds. Moreover, we prove the first three items of Theorem \ref{Thm:baselociHK}.

\item In Section \ref{Section3}, we study how the asymptotic base loci vary when slightly perturbing a big divisor class in the big cone. This leads to the decomposition of the big cone of projective HK manifolds into stability chambers. Furthermore, we prove the fourth item of Theorem \ref{Thm:baselociHK}.

\item In Section \ref{duality} we prove Theorem \ref{Thm:duality}.

\item In Section \ref{Section6} we provide for the effective cone of a projective HK manifold a decomposition into Mori-type chambers (analogous to that in Mori chambers for Mori dream spaces), resuming the birational geometry of the variety.

\item Section \ref{Examples} is devoted to examples.

\item In Section \ref{Section8} we ask two questions that would be interesting to answer.

\end{enumerate}

\section{\bf \scshape Preliminaries}\label{Section1}
In this section, we provide the background needed to prove the results of this paper. Let $Y$ be a normal complex projective variety of complex dimension $d$, and $X$ a projective HK manifold of dimension $2n$. Any divisor will be tacitly assumed to be Cartier.

\subsection{\large \itshape Divisors and cones}
We recall that an $\mathbf{R}$-divisor on $Y$ is a (finite) formal linear combination with real coefficients of integral divisors. The Néron-Severi space of $Y$ is the group of $\mathbf{R}$-divisors modulo the numerical equivalence relation. It will be denoted by $N^1(X)_{\mathbf{R}}$.
 Two (integral, rational, real) divisors $D,D'$ on $Y$ are numerically equivalent (and in this case we write $D\equiv D'$), if $D\cdot C=D'\cdot C$, for any irreducible curve $C$ in $Y$.
 \vspace{0.2cm}
 
Two $\mathbf{R}$-divisors $D,D'$ on $Y$ are $\mathbf{R}$-linearly equivalent (and in this case we write$D \sim_{\mathbf{R}} D'$) if $D-D'$ is an $\mathbf{R}$-linear combination of principal divisors.
 \vspace{0.2cm}

An integral divisor $D$ on $Y$ is movable if $\mathbf{B}(D)$, the stable base locus of $D$, has no divisorial irreducible components. The movable cone of $Y$ (in symbols $\overline{\mathrm{Mov}(Y)}$) is the closure in the Néron-Severi space of the cone spanned by movable, integral divisor classes. Any divisor class lying in $\overline{\mathrm{Mov}(Y)}$ will be called a "movable class". Notice that, for a projective hyper-Kähler manifold $X$, we have, $\overline{\mathrm{Mov}(X)}=\overline{\mathscr{BK}_X}\cap N^1(X)_{\mathbf{R}}$ (cf.\ \cite[Theorem 7]{Hassett}), where $\mathscr{BK}_X$ is the birational Kähler cone of $X$, i.e.\
\[
\mathscr{BK}_X=\bigcup_{f\colon X \DashedArrow X'} f^{*}\mathscr{K}_{X'},
\]
where $\mathscr{K}_{X'}$ is the Kähler cone of another hyper-Kähler manifold $X'$, $f\colon X \DashedArrow X'$ is a birational map and the closure $\overline{\mathscr{BK}_X}$ is taken in $H^{1,1}(X,\mathbf{R})$. Another useful characterization of the movable cone of $X$
 is the following
 \begin{equation}\label{Characterization:Movablecone}
 \overline{\mathrm{Mov}(X)}=\{D\in N^1(X)_{\mathbf{R}} \; | \; q_X(D,E)\geq 0 \text{ for any prime divisor } E\}.     
 \end{equation}

For a proof of \ref{Characterization:Movablecone}, see for example \cite[Lemma 2.7, Remark 2.10]{Den1}. An integral divisor $D$ on $Y$ is big if $h^0(Y,\mathscr{O}_Y(mD))$ grows like $m^d$ (recall that $d$ is the dimension of $Y$), for $m\gg 0$. An $\mathbf{R}$-divisor on $Y$ is big if it can be written as a linear combination of big integral divisors, with positive real coefficients. The big cone of $Y$ is the cone in $N^1(Y)_{\mathbf{R}}$ spanned by the big divisor classes. It will be denoted by $\mathrm{Big}(Y)$ and its closure in $N^1(X)_{\mathbf{R}}$ is known as the pseudo-effective cone of $Y$. Usually, the pseudo-effective cone of $Y$ is denoted by $\overline{\mathrm{Eff}(Y)}$, because it can be equivalently defined as the closure of the cone spanned by effective integral divisor classes (namely $\mathrm{Eff}(Y)$), which is known as the effective one of $Y$.
Actually, the big cone is the interior of both the effective cone and the pseudo-effective cone. For details about divisors and cones in the Néron-Severi space, we refer the reader to \cite{Laz}.

\begin{Rem}
By \ref{Characterization:Movablecone}, we have that $\overline{\mathrm{Eff}(X)}$ and $\overline{\mathrm{Mov}(X)}$ are one the dual of the other with respect to the BBF pairing.
\end{Rem}

\subsection{\large \itshape Asymptotic base loci}
We recall the definition of the various asymptotic base loci and provide some basic properties of them. We refer the reader to \cite{Ein2} for a comprehensive treatment of the argument.

\begin{Def}Let $D$ be an $\mathbf{R}$-divisor on $Y$.
\begin{itemize}
\item 
The real \textit{stable base locus} of $D$ is
\[
\mathbf{B}(D):=\bigcap\{\mathrm{Supp}(E)\;|\; E \text{ effective $\mathbf{R}$-divisor, $E \sim_{\mathbf{R}} D$ }\}.
\]
If $D$ is a $\mathbf{Q}$-divisor, the real stable base locus of $D$ coincides with the usual one (cf.\ \cite[Proposition 1.1]{BBP2013}).

\item The \textit{augmented base locus} of $D$ is
\[
\mathbf{B}_{+}(D):=\bigcap_{D=A+E}\mathrm{Supp}(E),
\]
where the intersection is taken over all the decompositions of the form $D=A+E$, where $A$ and $E$ are $\mathbf{R}$-divisors such that $A$ is ample and $E$ is effective.

\item The \textit{restricted base locus} of $D$ on $Y$ is
\[
\mathbf{B}_{-}(D):=\bigcup_{A}\mathbf{B}(D+A),
\]
where the union is taken over all ample divisors $A$. 
\end{itemize}
\end{Def}

If $D$ is any divisor on $Y$, we have the inclusions $\mathbf{B}_-(D) \subset \mathbf{B}(D)\subset \mathbf{B}_+(D)$ (cf.\ \cite[Example 1.16]{Ein2}, and \cite[Introduction]{BBP2013}), which in general are strict, as it is shown by the following example. 
\begin{Ex}
Consider the blow-up of the plane at 2 points, $L$ be any line not passing through the two points, and $L'$ its strict transform. Furthermore, let $E_1, E_2$ be the two exceptional divisors arising from the 2 points. Consider the divisor $D_i:=L'+E_i$, for $i=1,2$. We observe that $\mathbf{B}_{-}(D_i)=\mathbf{B}(D_i)=E_i$, while $\mathbf{B}_+(D_i)=E_1\cup E_2$ (it follows directly from \cite[Examples 1.10, 1.11, 3.4]{Ein2}). For a strict (non-trivial) inclusion $\mathbf{B}_{-} \subsetneq \mathbf{B}$, with $\mathbf{B}_{-}$ not algebraic, one can consider the beautiful example provided by Lesieutre (cf.\ \cite{lesieutre}). For an example with $\emptyset \neq \mathbf{B}_{-} \subsetneq \mathbf{B} \subsetneq \mathbf{B}_{+} \subsetneq Y$ see \cite[Example 3.4, (iii)]{TX23}.
\end{Ex}
\vspace{0.2cm}

\begin{Rem}
The augmented and restricted base loci on $Y$ can be safely studied in the Néron-Severi space because they do not depend on the representative of the class we have chosen. In general, we cannot say the same for the (real) stable base locus. However, on HK manifolds the numerical and $\mathbf{R}$-linear equivalence relations coincide, and this implies that also the real stable base locus can be safely studied up to numerical equivalence. Indeed, suppose that $D, D'$ are numerically equivalent $\mathbf{R}$-divisors on $X$, then $D-D'=\sum_ia_iD_i$, where $a_i \in \mathbf{R}$ and $D_i$ is a numerically trivial integral divisor, and hence also linearly equivalent to $0$. In particular, $D_i$ is a principal divisor, thus $D \sim_{\mathbf{R}} D'$.
\end{Rem}

Note that if $D$ is any $\mathbf{R}$-divisor on $Y$, $\mathbf{B}_{+}(D) \subsetneq Y$ if and only if $D$ is big, $\mathbf{B}_{-}(D)\subsetneq Y$ if and only if $D$ is pseudo-effective (i.e.\ the class of $D$ in $N^1(Y)_{\mathbf{R}}$ is the limit of some sequence of big classes), and $\mathbf{B}_-(D)=\emptyset$ if and only if $D$ is nef. Moreover, for a $\mathbf{Q}$-divisor $D$, $\mathbf{B}(D)=\emptyset$ if and only if (some positive multiple of) $D$ is semiample.
\vspace{0.2 cm}

As remarked in the introduction, in general, it is not possible to describe the asymptotic base loci of a big divisor using the intersections of the divisor with the curves, as the following example shows.

\begin{Ex}\label{Ex:notnumdet} Let $Q$ be a point of $\mathbf{P}^2$ and $S':=\mathrm{Bl}_Q(\mathbf{P}^2)$. Now, let $S$ be the blow up of $S'$ at a point $P$ lying on the exceptional divisor $E'$ of $S'$. If $F$ is the exceptional divisor of $S$ (i.e.\ the fiber over $P$), and $E$ is the strict transform of $E'$ on $S$, we have $F^2=-1$, and $E^2=-2$. Furthermore, the Gram matrix of $\{E, F\}$ is negative definite. Let $D$ be a big and nef divisor that is orthogonal to both $E, F$, and consider the divisors $D_1:=D+E+F$, $D_2:=D+E+3F$. Then $\mathbf{B}_-(D_2)=\mathbf{B}_-(D_1)=\mathrm{Supp}(E)\cup\mathrm{Supp}(F)$. But $D_1\cdot E=-1<0$, $D_1\cdot F=0$, and $D_2 \cdot E =1>0$. 
\end{Ex}

\subsection{\large \itshape Divisorial Zariski decomposition}\label{divZardecomp}
On any compact complex manifold we have the divisorial Zariski decomposition, which was introduced by Boucksom in his fundamental paper \cite{Boucksom} (but see also Nakayama's construction, cf.\ \cite{Nakayama}), and is a generalization of the classical Zariski decomposition on surfaces. The divisorial Zariski decomposition will occupy a central role in this paper, and in the case of HK manifolds Boucksom characterized it with respect to the BBF form.

\begin{Thm}[Theorem 4.8, \cite{Boucksom}]\label{DivZarDec}
Let $D$ be a pseudo-effective $\mathbf{R}$-divisor on $X$. Then $D$ admits a divisorial Zariski decomposition, i.e.\ we can write $D=P(D)+N(D)$ in a unique way such that:
\begin{enumerate}
\item $P(D) \in \overline{\mathrm{Mov}(X)}$.
\item $N(D)$ is an effective and (if non-zero) exceptional $\mathbf{R}$-divisor (i.e.\ the Gram matrix of its irreducible components is negative definite), 
\item $P(D)$ is orthogonal (with respect to $q_X$) to any irreducible component of $N(D)$.
\end{enumerate}
The divisor $P(D)$ (resp.\ $N(D)$) is the \textit{positive part} (resp.\ \textit{negative part}) of $D$.
\end{Thm}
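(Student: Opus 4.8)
The plan is to prove existence and uniqueness separately, exploiting two inputs: the Hodge-index property of the Beauville--Bogomolov--Fujiki form, namely that $q_X$ has signature $(1,\rho-1)$ on $N^1(X)_{\mathbf{R}}$ (with $\rho=\mathrm{rk}\,\mathrm{Pic}(X)$), the ample cone lying in the positive cone; and the characterization \eqref{Characterization:Movablecone} of $\overline{\mathrm{Mov}(X)}$ together with the duality $\overline{\mathrm{Eff}(X)}=\overline{\mathrm{Mov}(X)}^{\vee}$. I will repeatedly use the elementary facts about a Lorentzian form that two classes lying in the closure of the same component of the positive cone pair non-negatively, and that the orthogonal complement of a class $P$ with $q_X(P,P)>0$ is negative definite. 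Since every effective class pairs positively with an ample class (by the Fujiki relation $D\cdot A^{2n-1}\propto q_X(D,A)\,q_X(A,A)^{n-1}$), all prime divisors lie in the closed half-space cut out by the positive component.

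\emph{Existence.} I would first treat the case $q_X(D,D)>0$ (i.e.\ $D$ big), and recover the pseudo-effective case by perturbing $D\rightsquigarrow D+\varepsilon A$ with $A$ ample and letting $\varepsilon\to 0$, using the monotone continuity of the two parts in $D$. For big $D$, following Zariski's method I let $N$ be a maximal effective $\mathbf{R}$-divisor, supported on prime divisors $E$ with $q_X(E,E)<0$, such that $P:=D-N$ stays pseudo-effective; this peels off the divisorial part of the non-movable locus. By maximality each component $E$ of $N$ satisfies the \emph{active-constraint} relation $q_X(P,E)=0$, which is property (3) and which pins down the coefficients of $N$ as the (unique) solution of the linear system $Ma=b$, with $M=\bigl(q_X(E_i,E_j)\bigr)$ the Gram matrix of the support and $b_j=q_X(D,E_j)$. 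Property (1) follows from \eqref{Characterization:Movablecone}: for a component of $N$ the inequality $q_X(P,E)\ge 0$ is the orthogonality just obtained, and for every other prime divisor it holds by maximality of $N$ (otherwise that divisor would have to enter the support).

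\emph{Negative-definiteness and the crux.} Property (2) is the heart of the statement. Since the positive part of a big class is again big (its volume equals that of $D$), we have $q_X(P,P)>0$, so $\mathbf{R}P$ is a positive line and its orthogonal complement $P^{\perp}$ is negative definite by the signature hypothesis. As the components of $N$ lie in $P^{\perp}$ by (3), the Gram matrix $M$ is negative definite; in particular the components are linearly independent, so there are at most $\rho$ of them and the maximization terminates. This Hodge-index step, transporting the surface-theoretic fact that the support of the negative part is negative definite to the BBF form, is exactly where the special geometry of $X$ enters and is the main obstacle: it is what makes the construction terminate and what will force uniqueness below.

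\emph{Uniqueness.} Given two decompositions $D=P+N=P'+N'$ satisfying (1)--(3), I expand $q_X(N-N',N-N')$ using $N-N'=P'-P$ and the orthogonalities $q_X(P,N)=0$, $q_X(P',N')=0$, obtaining $q_X(N-N',N-N')=q_X(N,P')+q_X(N',P)$; both terms are $\ge 0$ since $P,P'$ are movable and $N,N'$ are effective, so $q_X(N-N',N-N')\ge 0$. On the other hand $N-N'$ is supported on the exceptional prime divisors occurring in $N$ or $N'$, and the same Hodge-index mechanism shows this combined configuration spans a negative-definite subspace; hence $q_X(N-N',N-N')\le 0$, forcing $N-N'=0$ and then $P=P'$. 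The delicate point, again, is the negative-definiteness of this combined support, which I would establish by the Lorentzian signature argument applied to the positive parts of the two decompositions; as in the existence proof, this is the step carrying the real content, the rest being formal bookkeeping together with the limiting argument that descends from big to arbitrary pseudo-effective classes.
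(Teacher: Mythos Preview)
The paper does not prove this theorem: it is quoted as a preliminary result from Boucksom (Theorem~4.8 in \cite{Boucksom}), with only the characterization in terms of the BBF form recorded. So there is no ``paper's proof'' to compare against; I evaluate your argument on its own merits.

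Your existence construction is wrong. You define $N$ as a \emph{maximal} effective divisor, supported on negative-square primes, with $D-N$ still pseudo-effective. Take $D=A$ ample and suppose $X$ carries a prime exceptional divisor $E$. Then $A-tE$ is ample for all small $t>0$, so your $N$ is nonzero, whereas the correct decomposition is $P(A)=A$, $N(A)=0$. The same example kills the claimed ``active-constraint'' implication: at the maximal $t$ the class $A-tE$ lies on $\partial\overline{\mathrm{Eff}(X)}$, which by the duality $\overline{\mathrm{Eff}(X)}=\overline{\mathrm{Mov}(X)}^{\vee}$ only says there is some \emph{movable} class $M$ with $q_X(A-tE,M)=0$; it does not force $q_X(A-tE,E)=0$. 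The right construction is the dual one recorded in Remark~\ref{Rem:zardecomp}: take $P$ to be the maximal $q_X$-nef (equivalently, movable) subdivisor of $D$. Then (3) is the active constraint for \emph{that} maximization, and (2) follows because the components of $N$ lie in $P^{\perp}$ with $q_X(P)>0$.

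Your uniqueness computation $q_X(N-N',N-N')=q_X(N,P')+q_X(N',P)\ge 0$ is correct and elegant. The gap is the opposite inequality: you assert that the union of the supports of $N$ and $N'$ has negative-definite Gram matrix ``by the same Hodge-index mechanism'', but the components of $N$ lie in $P^{\perp}$ while those of $N'$ lie in $(P')^{\perp}$, and there is no single positive class orthogonal to all of them. One fixes this by a separate lemma (true for any Lorentzian lattice with an ``intersection-product'' positivity $q_X(E,E')\ge 0$ for distinct primes): a finite set of prime divisors each of negative square, whose Gram matrix is not negative definite, supports a nonzero effective combination that is $q_X$-nef, contradicting that each $E_i$ is exceptional. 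With that lemma in hand, your inequality forces $N=N'$.
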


\begin{Rem}\label{Rem:zardecomp}
 If the divisor $D$ is effective, its positive part can be defined as the maximal $q_X$-nef subdivisor of $D$. This means that, if $D=\sum_ia_iD_i$, where the $D_i's$ are the irreducible components of $D$, the positive part of $D$ is the maximal divisor $P(D)=\sum_ib_iD_i$, with respect to the relation "$P(D)\leq D$ if and only if $b_i \leq a_i$ for every $i$", such that $q_X(P(D),E)\geq 0$, for any prime divisor $E$. 
\end{Rem}
 
 The positive part of the divisorial Zariski decomposition of a big divisor encodes most of the positivity of the divisor itself. 
Indeed, given any big $\mathbf{Q}$-divisor $D$ on $X$, one has $H^0(X,mD)\cong H^0(X,mP(D)) $, for $m$ sufficiently divisible (cf.\ \cite[Theorem 5.5]{Boucksom}), and also we have $\mathbf{B}_+(D)=\mathbf{B}_+(P(D))$, $\mathbf{B}_-(D)=\mathbf{B}_-(P(D))$  (see Lemma \ref{lem:B+positivepart}). Furthermore, $D$ is big if and only if $P(D)$ is (see \cite[Proposition 3.8]{Boucksom}).

\subsection{\large \itshape Wall divisors and rational curves}
Recall that $H_2(X,\mathbf{Z}) \cong H^2(X,\mathbf{Z})^{*} \subset H^2(X,\mathbf{Q})$. Hence, if $\gamma$ is a curve class in $H_2(X,\mathbf{Z})$, up to numerical equivalence, there exists a unique $\mathbf{Q}$-divisor $D$  on $X$ such that $\alpha \cdot \gamma=q_X(\alpha,D)$, for any $\mathbf{R}$-divisor class $\alpha$. The square of $\gamma$ with respect to the BBF quadratic form is defined as $q_X(\gamma):=q_X(D)\in \mathbf{Q}$.

\begin{Def}
A wall divisor on $X$ is an integral divisor $D$ such that $q_X(D)<0$ and $g(D)^{\perp} \cap \mathscr{BK}_X=\emptyset$, for any $g\in \mathrm{Mon}^2_{\mathrm{Hdg}}(X)$.
\end{Def}

Since for a wall divisor $D$ it holds $q_X(D)<0$, we have that $g(D)^{\perp}$ intersects $\mathscr{C}_X$. An important property of wall divisors is that they stay wall divisors under parallel transport (cf.\ \cite[Theorem 1.3]{Mongardi1}). Furthermore, there is an important relation between them and (rational) extremal curves on $X$ and its birational models.

\begin{Prop}[Proposition 1.5, \cite{Mongardi1}]
Let $h$ be an ample class on $X$. There is a bijective correspondence between primitive wall divisors intersecting positively $h$ and negative extremal rays of the Mori cone of $X$ (with respect to the BBF pairing).
\end{Prop}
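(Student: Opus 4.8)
\emph{Strategy.} The statement packages Kleiman's duality $\overline{\mathrm{NE}(X)}=\mathrm{Nef}(X)^\vee$ together with the description, via the BBF form, of the walls of the nef cone of a projective HK manifold. Since $q_X$ is non-degenerate on $N^1(X)_{\mathbf R}$, sending a curve class $\gamma$ to the unique divisor class $D_\gamma$ with $\alpha\cdot\gamma=q_X(\alpha,D_\gamma)$ for all $\alpha$ defines an isomorphism $N_1(X)_{\mathbf R}\xrightarrow{\ \sim\ }N^1(X)_{\mathbf R}$ with $q_X(\gamma)=q_X(D_\gamma)$. The plan is to show that under this isomorphism the negative extremal rays of $\overline{\mathrm{NE}(X)}$ correspond exactly to the rays spanned by primitive wall divisors $D$ with $q_X(D,h)>0$; non-degeneracy then makes $R\mapsto D_R$ and $D\mapsto R_D$ mutually inverse, giving the asserted bijection.

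\emph{From a wall divisor to an extremal ray.} Let $D$ be a primitive wall divisor with $q_X(D,h)>0$ and set $R:=R_D$, so $q_X(R)=q_X(D)<0$ and $h\cdot R=q_X(h,D)>0$. Since $\mathscr K_X\subset\mathscr{BK}_X$, the wall-divisor condition for $g=\mathrm{id}$ gives $D^\perp\cap\mathscr K_X=\emptyset$; as $\mathscr K_X$ is connected and contains $h$ with $q_X(h,D)>0$, it lies in $\{q_X(\cdot,D)>0\}$, so $\alpha\cdot R=q_X(\alpha,D)\ge 0$ for every nef $\alpha$ and hence $R\in\mathrm{Nef}(X)^\vee=\overline{\mathrm{NE}(X)}$. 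What needs genuine input is that $R$ spans an \emph{extremal} ray. This is equivalent to $D^\perp\cap\mathrm{Nef}(X)$ being a facet of $\mathrm{Nef}(X)$: for then the dual face of $\overline{\mathrm{NE}(X)}$ is contained in the $q_X$-orthogonal of the line $\mathbf R D$, i.e.\ in $\mathbf R R$. I would deduce this from the structure theory of Kähler cones of hyper-Kähler manifolds, namely the description of $\mathscr K_X$ as the connected component containing an ample class of $\mathscr C_X$ with the hyperplanes $D'^\perp$ deleted ($D'$ running over wall divisors), and the fact that each facet of $\overline{\mathscr K_X}$ inside $\mathscr C_X$ is carried by such a $D'^\perp$ (Huybrechts, Boucksom, Amerik--Verbitsky, Bayer--Hassett--Tschinkel).

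\emph{From an extremal ray to a wall divisor.} Conversely, given a negative extremal ray $R$ of $\overline{\mathrm{NE}(X)}$, let $D$ be the primitive integral class proportional to $D_R$ and normalised by $q_X(D,h)=h\cdot R>0$; then $q_X(D)$ has the sign of $q_X(R)<0$. To check that $D$ is a wall divisor one needs $g(D)^\perp\cap\mathscr{BK}_X=\emptyset$ for all $g\in\mathrm{Mon}^2_{\mathrm{Hdg}}(X)$, and by the parallel-transport invariance of wall divisors it is enough to treat $g=\mathrm{id}$. This holds because $D^\perp$ carries the facet of $\mathrm{Nef}(X)$ dual to the extremal ray $R$, and no wall of a Kähler chamber meets the interior of any Kähler chamber lying in $\mathscr{BK}_X$: crossing such a wall, one leaves one chamber for the next. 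Since the two assignments are patently inverse to one another, the correspondence is a bijection.

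\emph{Main obstacle.} The delicate part, in both directions, is the transition between the \emph{global} wall-divisor condition — quantified over all of $\mathrm{Mon}^2_{\mathrm{Hdg}}(X)$ and, through $\mathscr{BK}_X$, over every birational model of $X$ — and the \emph{local} chamber geometry of $\mathscr K_X$ in $N^1(X)_{\mathbf R}$. Concretely, one must show that a wall divisor $D$ normalised by $q_X(D,h)>0$ always cuts out a facet of $\mathscr K_X$, and conversely that the hyperplane dual to a negative extremal contraction of $X$ is the orthogonal of a wall divisor; this is exactly the content of the global Torelli and monodromy results on Kähler cones of hyper-Kähler manifolds. Once that is granted, everything else — Kleiman's criterion, non-degeneracy of $q_X$, and the bookkeeping with primitive, $h$-positive representatives — is routine.
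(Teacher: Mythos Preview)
The paper does not supply a proof of this proposition; it is quoted from \cite{Mongardi1} and stated without argument in the preliminaries. There is thus no ``paper's own proof'' to compare against, only your reconstruction.

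Your outline has the right architecture---Kleiman duality together with the BBF isomorphism $N_1(X)_{\mathbf R}\cong N^1(X)_{\mathbf R}$ is indeed the formal skeleton, and you correctly isolate as the substantive point that a primitive wall divisor $D$ with $q_X(D,h)>0$ must cut an actual \emph{facet} of $\mathrm{Nef}(X)$, not merely avoid its interior. However, there is a genuine gap in your backwards direction. You write that ``by the parallel-transport invariance of wall divisors it is enough to treat $g=\mathrm{id}$'', i.e.\ that $D^{\perp}\cap\mathscr{BK}_X=\emptyset$ alone forces $g(D)^{\perp}\cap\mathscr{BK}_X=\emptyset$ for every $g\in\mathrm{Mon}^2_{\mathrm{Hdg}}(X)$. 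This is circular: parallel-transport invariance is a theorem about classes already known to be wall divisors, so it cannot be invoked to establish that $D$ is one. Nor does the reduction hold for a structural reason: $\mathscr{BK}_X$ is \emph{not} invariant under $\mathrm{Mon}^2_{\mathrm{Hdg}}(X)$ (the reflections in prime exceptional classes send the movable cone to other chambers of the positive cone), so $D^{\perp}\cap\mathscr{BK}_X=\emptyset$ does not formally imply $D^{\perp}\cap g^{-1}(\mathscr{BK}_X)=\emptyset$.

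The argument in Mongardi's paper (and the parallel one of Amerik--Verbitsky for MBM classes) does not proceed by this reduction. It goes through deformation theory: one shows that the class dual to an extremal rational curve on $X$ deforms, along the locus where it stays of type $(1,1)$, to a class that remains extremal (or orthogonal to a wall of the K\"ahler cone) on every such deformation, and it is this deformation statement that controls all monodromy transforms at once. Your ``main obstacle'' paragraph correctly names the global Torelli and K\"ahler-cone results as the real input, but the sketch as written does not bridge the gap---the step you flagged as routine (the reduction to $g=\mathrm{id}$) is in fact where the deformation-theoretic work enters.
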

Then, a wall divisor, up to the action of $\mathrm{Mon}_{\mathrm{Hdg}}^2(X)$, is dual to some (rational) extremal curve on some birational model of $X$.
\begin{Rem}
The MBM classes defined in \cite{AmVerb} are exactly the homology classes which are dual to wall divisors (cf.\ \cite[Proposition 2.3, Remark 2.4]{KLM})
\end{Rem} 

\section{\bf \scshape Characterization of Asymptotic Base Loci}\label{Section2}
This section is devoted to the study of the asymptotic base loci of big divisors on projective HK manifolds.
$X$ will always denote a projective HK manifold of complex dimension 2n.

\begin{Prop}\label{positivepart}
Let $D$ be any big $\mathbf{R}$-divisor on $X$. Then $\mathbf{B}_+(D)=\mathbf{B}_+(P(D)) \cup \mathrm{Supp}(N(D))$, and $\mathbf{B}_-(D)=\mathbf{B}_-(P(D)) \cup \mathrm{Supp}(N(D))$.
\begin{proof}
 Write $D=A+N$, where $A$ and $N$ are ample and effective respectively. Then $P(D)=A+N'$, where $N' \leq N$ is effective (see Remark \ref{Rem:zardecomp}), and so $\mathrm{Supp}(N')\subset \mathrm{Supp}(N)$. This implies that $\mathbf{B}_+(P(D)) \subset \mathbf{B}_+(D)$. Moreover $\mathrm{Supp}(N(D)) \subset \mathbf{B}_+(D)$, hence $\mathbf{B}_+(P(D)) \cup \mathrm{Supp}(N(D)) \subset \mathbf{B}_+(D)$. On the other hand, writing $P(D)=A'+N'$, with $A'$ and $N'$ ample and effective respectively, we obtain that $D=A'+N'+N(D)$, hence $\mathbf{B}_+(D)\subset \mathbf{B}_+(P(D)) \cup \mathrm{Supp}(N(D))$, and we are done. By definition, 
 \[
 \mathbf{B}(D)=\bigcap_{E \equiv D} \mathrm{Supp}(E),
 \]
 and we know by \cite[Theorem A]{BBP2013} that, for any big divisor $D$, $\mathbf{B}_{-}(D)=\mathbf{B}(D)$. If we write $D=P(D)+N(D)$, the above equality becomes 
  \[
 \mathbf{B}_{-}(D)=\bigcap_{E \equiv D} \mathrm{Supp}(P(E)+N(E)),
 \]
 and for any effective divisor $E$ with $E \equiv D$, we have $\mathrm{Supp}(N(E))=\mathrm{Supp}(N(D))$ (cf.\ \cite[Remark 1.5]{Den1}), so that 
   \[
 \mathbf{B}_{-}(D)=\left(\bigcap_{E \equiv D} \mathrm{Supp}(P(E))\right)\cup \mathrm{Supp}(N(D)).
 \]
 If $E'$ is en effective divisor numerically equivalent to $P(D)$, $E'+N(D)\equiv P(D)+N(D)$. On the other hand, if $E$ is an effective divisor numerically equivalent to $D$, $P(E)$ is numerically equivalent to $P(D)$ (cf.\ \cite[Remark 1.5]{Den1}). This implies that 
    \[
 \mathbf{B}_{-}(D)=\left(\bigcap_{E \equiv P(D)} \mathrm{Supp}(E)\right)\cup \mathrm{Supp}(N(D)).
 \]
 But $\bigcap_{E \equiv P(D)} \mathrm{Supp}(E)=\mathbf{B}(P(D))=\mathbf{B}_{-}(P(D))$, and this concludes the proof.
\end{proof}
\end{Prop}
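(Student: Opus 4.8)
The plan is to prove the two identities separately, each by a double inclusion; beyond elementary manipulations with supports of effective divisors, the inputs I would use are the maximality of the positive part (Remark~\ref{Rem:zardecomp}), in the form of the subadditivity of the negative part $N(\alpha+\beta)\le N(\alpha)+N(\beta)$ on pseudo-effective classes together with $N(A)=0$ for nef $A$ (\cite{Boucksom}); the coincidence of numerical and $\mathbf{R}$-linear equivalence on $X$; the identity $\mathbf{B}_-(\cdot)=\mathbf{B}(\cdot)$ for big divisors (\cite[Theorem A]{BBP2013}); and the fact that $P(D)$ is big whenever $D$ is (\cite[Proposition 3.8]{Boucksom}). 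Throughout I would use that on $X$ the augmented base locus of an $\mathbf{R}$-divisor $F$ equals $\bigcap\mathrm{Supp}(E')$, the intersection running over all writings $F\sim_{\mathbf{R}}A+E'$ with $A$ ample and $E'$ effective (equivalent to the definition above, since ampleness is a numerical condition and $\sim_{\mathbf{R}}$ and $\equiv$ agree on $X$), and likewise $\mathbf{B}(F)=\bigcap\{\mathrm{Supp}(E'):E'\ge 0,\ E'\equiv F\}$.

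For $\mathbf{B}_+$, the geometric inclusion is $\mathbf{B}_+(P(D))\cup\mathrm{Supp}(N(D))\subseteq\mathbf{B}_+(D)$. I would fix a writing $D\sim_{\mathbf{R}}A+E$ with $A$ ample and $E$ effective. Since $N$ depends only on the numerical class and is subadditive with $N(A)=0$, one gets $N(D)=N(A+E)\le N(E)\le E$, so $\mathrm{Supp}(N(D))\subseteq\mathrm{Supp}(E)$ and $E-N(D)$ is effective; then $P(D)=D-N(D)\sim_{\mathbf{R}}A+(E-N(D))$ is again an ample-plus-effective writing, hence $\mathbf{B}_+(P(D))\subseteq\mathrm{Supp}(E-N(D))\subseteq\mathrm{Supp}(E)$. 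Intersecting over all such writings of $D$ yields the inclusion. The reverse inclusion is formal: from any writing $P(D)\sim_{\mathbf{R}}A+E$ one gets $D=P(D)+N(D)\sim_{\mathbf{R}}A+(E+N(D))$ with $E+N(D)$ effective, so $\mathbf{B}_+(D)\subseteq\mathrm{Supp}(E+N(D))=\mathrm{Supp}(E)\cup\mathrm{Supp}(N(D))$; intersecting over all writings of $P(D)$ gives $\mathbf{B}_+(D)\subseteq\mathbf{B}_+(P(D))\cup\mathrm{Supp}(N(D))$.

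For $\mathbf{B}_-$, I would first replace $\mathbf{B}_-(D)$ by $\mathbf{B}(D)=\bigcap\{\mathrm{Supp}(E):E\ge 0,\ E\equiv D\}$, using that $D$ is big (\cite[Theorem A]{BBP2013}) and that $\equiv$ and $\sim_{\mathbf{R}}$ agree on $X$. For any effective $E\equiv D$ one has $\mathrm{Supp}(N(E))=\mathrm{Supp}(N(D))$, $P(E)\equiv P(D)$, and $P(E)=E-N(E)\ge 0$ (\cite[Remark 1.5]{Den1} and Remark~\ref{Rem:zardecomp}), so $\mathrm{Supp}(E)=\mathrm{Supp}(P(E))\cup\mathrm{Supp}(N(D))$; conversely, for any effective $E'\equiv P(D)$ the divisor $E'+N(D)$ is effective, $\equiv D$, with support $\mathrm{Supp}(E')\cup\mathrm{Supp}(N(D))$. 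Hence the families $\{\mathrm{Supp}(E):E\ge 0,\ E\equiv D\}$ and $\{\mathrm{Supp}(E')\cup\mathrm{Supp}(N(D)):E'\ge 0,\ E'\equiv P(D)\}$ coincide, and taking intersections gives $\mathbf{B}(D)=\mathbf{B}(P(D))\cup\mathrm{Supp}(N(D))$. Finally $P(D)$ is big (\cite[Proposition 3.8]{Boucksom}), so $\mathbf{B}(P(D))=\mathbf{B}_-(P(D))$, again by \cite[Theorem A]{BBP2013}, and the second identity follows.

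The only step that is not pure bookkeeping with supports is the chain $N(D)\le N(E)\le E$ used in the lower bound for $\mathbf{B}_+$, which encodes the maximality of the positive part (and already holds on any compact Kähler manifold); I expect this to be the delicate point of the argument. The hyper-Kähler hypothesis enters only through the coincidence of numerical and $\mathbf{R}$-linear equivalence, which is what lets $\mathbf{B}$ and $\mathbf{B}_+$ be computed with numerical classes and makes the matching-of-families argument for $\mathbf{B}_-$ go through. Beyond carefully keeping track of which equivalence relation is in play at each step, I do not anticipate a genuine obstacle.
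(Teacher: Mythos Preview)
Your proposal is correct and follows essentially the same approach as the paper's proof. The only differences are cosmetic: you justify $N(D)\le E$ (for $D\sim_{\mathbf{R}}A+E$) via subadditivity of $N$ together with $N(A)=0$ and $N(E)\le E$, whereas the paper compresses this into a reference to Remark~\ref{Rem:zardecomp}; and you make explicit the appeal to bigness of $P(D)$ when invoking \cite[Theorem~A]{BBP2013} a second time, which the paper leaves implicit.
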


In particular, to study the asymptotic base loci of big divisors on projective HK manifolds, we can restrict ourselves to big, movable divisors.

\begin{Def}
Given a big divisor $D$ with class lying in $\overline{\mathrm{Mov}(X)}$, we define $\mathrm{Null}_{q_X}(D)$ as the set of prime divisors which are $q_X$-orthogonal to $D$.
\end{Def}

\begin{Rem}
Note that for any big and $q_X$-nef divisor $D$, $\mathrm{Null}_{q_X}(D)$ is made of prime exceptional divisors. Indeed, by the bigness of $D$, we can write $D=A+N$ (cf.\ \cite[Proposition 2.2.22]{Laz}), with $A$ ample and $N$ effective $\mathbf{R}$-divisors. If $q_X(D,E)=0$ for some prime divisor $E$, then $E$ must be an irreducible component of $N$ and $q_X(E)<0$, because $q_X(A,E)>0$, as $A$ is ample.
\end{Rem}

\begin{Prop}\label{lem:B+positivepart}
Let $D$ be a big $\mathbf{R}$-divisor. Then
\[
\mathbf{B}_+(D) = \mathbf{B}_+(P(D))
\]
and the divisorial part $\mathbf{B}_+(D)_{\mathrm{div}}$ of $\mathbf{B}_+(D)$ is $\mathrm{Null}_{q_X}(P(D))$.
\end{Prop}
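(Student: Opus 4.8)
The plan is to reduce everything, by means of Proposition \ref{positivepart}, to the big and movable divisor $P:=P(D)$ (recall $P(D)$ is big by \cite[Proposition 3.8]{Boucksom} and movable by Theorem \ref{DivZarDec}(1)), and to establish: \emph{(I)} $\mathrm{Null}_{q_X}(P)\subseteq \mathbf{B}_+(P)$; and \emph{(II)} every prime divisorial component of $\mathbf{B}_+(P)$ lies in $\mathrm{Null}_{q_X}(P)$. Granting these, the first equality is immediate: each prime component $E_i$ of $N(D)$ is $q_X$-orthogonal to $P$ by Theorem \ref{DivZarDec}(3), so $\mathrm{Supp}(N(D))\subseteq\mathrm{Null}_{q_X}(P)\subseteq\mathbf{B}_+(P)$ by \emph{(I)}, whence $\mathbf{B}_+(D)=\mathbf{B}_+(P)\cup\mathrm{Supp}(N(D))=\mathbf{B}_+(P)$ by Proposition \ref{positivepart}. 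For the second statement, \emph{(I)} together with the Remark above exhibits $\mathrm{Null}_{q_X}(P)$ as a family of prime divisors contained in $\mathbf{B}_+(P)=\mathbf{B}_+(D)$, hence (as $\mathbf{B}_+(D)\subsetneq X$) as divisorial irreducible components, so $\mathrm{Null}_{q_X}(P)\subseteq\mathbf{B}_+(D)_{\mathrm{div}}$, while \emph{(II)} gives the reverse inclusion.

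For \emph{(II)} I would use that $\mathbf{B}_+(P)=\mathbf{B}(P-\varepsilon A)$ for a fixed ample $A$ and all sufficiently small rational $\varepsilon>0$ (see \cite{Ein2}), together with the fact that for a big divisor the divisorial part of the (stable $=$ restricted) base locus is exactly the support of the negative part of its divisorial Zariski decomposition (Nakayama's $\sigma$-decomposition, cf.\ \cite{Nakayama}, \cite{Boucksom}; alternatively Proposition \ref{positivepart} combined with the absence of divisorial restricted base locus for a big movable class). Thus a divisorial component $E$ of $\mathbf{B}_+(P)$ is a component of $N(P-\varepsilon A)$, hence $q_X(P(P-\varepsilon A),E)=0$ by Theorem \ref{DivZarDec}(3), i.e.
\[
q_X(P,E)-\varepsilon\,q_X(A,E)=q_X(P-\varepsilon A,E)=q_X\big(N(P-\varepsilon A),E\big).
\]
Letting $\varepsilon\to 0^{+}$ and using the continuity of $D\mapsto P(D)$ on the big cone (\cite{Boucksom}), so that $N(P-\varepsilon A)\to N(P)=0$, the right-hand side tends to $0$; therefore $q_X(P,E)=0$, i.e.\ $E\in\mathrm{Null}_{q_X}(P)$.

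For \emph{(I)}, let $E\in\mathrm{Null}_{q_X}(P)$; by the Remark above $E$ is prime exceptional, so $q_X(E)<0$. Suppose $E\not\subseteq\mathbf{B}_+(P)$; then, by the definition of the augmented base locus and irreducibility of $E$, we may write $P\equiv A'+N'$ with $A'$ ample, $N'\geq 0$ effective and $E\not\subseteq\mathrm{Supp}(N')$. Pairing with $E$ gives $0=q_X(P,E)=q_X(A',E)+q_X(N',E)$, where $q_X(A',E)>0$ since $A'$ is ample and $E$ is a non-zero effective divisor (recall $\overline{\mathrm{Eff}(X)}$ and $\overline{\mathrm{Mov}(X)}$ are $q_X$-dual and the ample cone lies in the interior of the latter), and $q_X(N',E)\geq 0$ because a prime exceptional divisor has non-negative $q_X$-pairing with every prime divisor distinct from it — a statement ultimately about the rational curves in the extremal ray $q_X$-dual to $E$ that cover $E$ on a suitable birational hyper-Kähler model (cf.\ the discussion of wall divisors and \cite{Mongardi1}). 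This contradiction proves $E\subseteq\mathbf{B}_+(P)$. (Alternatively, one avoids this last fact: for $t\geq 0$ the divisor $P+tE$ is big, and the uniqueness in Theorem \ref{DivZarDec} identifies its divisorial Zariski decomposition as $P(P+tE)=P$, $N(P+tE)=tE$, so $\mathrm{vol}(P+tE)=\mathrm{vol}(P(P+tE))=\mathrm{vol}(P)$ is constant near $t=0$ \cite[Theorem 5.5]{Boucksom}; the differentiability of the volume function and the restricted-volume description of the augmented base locus then force $\mathrm{vol}_{X\mid E}(P)=0$, i.e.\ $E\subseteq\mathbf{B}_+(P)$, cf.\ \cite{Ein1}.)

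The reduction in the first paragraph and the argument for \emph{(II)} are routine once Proposition \ref{positivepart} and the continuity of the divisorial Zariski decomposition are available. I expect the main obstacle to be \emph{(I)}: the inclusion $\mathrm{Null}_{q_X}(P)\subseteq\mathbf{B}_+(P)$ is not formal, and on either route it rests on a genuine geometric input — the positivity property of prime exceptional divisors (i.e.\ the covering family of rational curves contracted by the associated elementary contraction on some birational hyper-Kähler model), or, equivalently for this purpose, the differentiability of the volume function together with the characterization of $\mathbf{B}_+$ via restricted volumes.
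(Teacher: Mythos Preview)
Your proof is correct, and in fact your argument for \emph{(II)} is more elementary than the paper's. You have, however, inverted where the work lies.

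For \emph{(I)} (the inclusion $\mathrm{Null}_{q_X}(P)\subseteq\mathbf{B}_+(P)$), the paper does exactly what you do in your first route: pick a decomposition $P=A'+N'$ with $E\not\subset\mathrm{Supp}(N')$ and observe $q_X(P,E)=q_X(A',E)+q_X(N',E)>0$. The justification of $q_X(N',E)\geq 0$ is not the deep geometric input you make it out to be; it is simply Boucksom's observation that $q_X$ is an ``intersection product'' on HK manifolds, i.e.\ $q_X(E,E')\geq 0$ for any two distinct prime divisors (cf.\ \cite[Proposition 4.2]{Boucksom}). No wall divisors, extremal rays, or birational models are needed, and your volume-based alternative, while correct, is overkill.

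For \emph{(II)} (the inclusion $\mathbf{B}_+(P)_{\mathrm{div}}\subseteq\mathrm{Null}_{q_X}(P)$), this is where the approaches genuinely diverge. The paper invokes the log MMP for HK manifolds \cite{MatsushitaZhang2013} to pass to a model $X'$ on which $f_*P$ is big and nef, then uses \cite[Proof of Theorem~A]{BBP2013} to track the divisorial component $E$ to $E'\subset\mathbf{B}_+(f_*P)$, applies Nakamaye's theorem in the form \cite[Theorem~5.2(b), Example~5.5]{Ein1} to get $(f_*P)^{2n-1}\cdot E'=0$, and finally uses the Fujiki relation to deduce $q_{X'}(f_*P,E')=0$, hence $q_X(P,E)=0$ by isometry. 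Your route---write $\mathbf{B}_+(P)=\mathbf{B}(P-\varepsilon A)$, identify $E$ as a component of $N(P-\varepsilon A)$, and let $\varepsilon\to 0$ using continuity of the Zariski decomposition on the big cone---avoids all of this machinery and stays entirely within Boucksom's framework. This is a genuine simplification: the paper's proof of this proposition already anticipates the MMP arguments of the subsequent Proposition~\ref{Prop:B+MBM}, whereas yours does not.
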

\begin{proof}
By Proposition \ref{positivepart}, as $\mathrm{Supp}(N(D))\subset \mathrm{Null}_{q_X}(P(D))$, we only need to check that \[\mathbf{B}_+(P(D))_{\mathrm{div}}=\mathrm{Null}_{q_X}(P(D)).\] Suppose $E \not \subset \mathbf{B}_{+}(P(D))$, then $q_X(P(D),E)>0$. Indeed,
\[
\mathbf{B}_{+}(D):=\bigcap_{\substack{D=A+N \\ A \text{ ample } \\ N \text{ effective}}} \mathrm{Supp}(N),
\] 
hence there exists a decomposition $D=A+N$ such that $E\not \subset \mathrm{Supp}(N)$. As $q_X(A,E)>0$ (cf.\ \cite[1.11]{Huybrechts1999}) and $q_X$ is an intersection product we are done. This implies that if $E$ is any prime divisor satisfying $q_X(P(D),E)=0$, then $E \subset \mathbf{B}_{+}(P(D))$, so that $\mathbf{B}_{+}(P(D)) \supset \mathrm{Null}_{q_X}(P(D))$. Now, let $E$ be any prime divisor lying in $\mathbf{B}_{+}(P(D))$. By \cite[Theorem 1.2]{MatsushitaZhang2013} there exists a birational model $X'$ of $X$ and a birational map $f \colon X \DashedArrow X'$ such that $f_{*}(P(D))$ is big and nef. Furthermore, arguing as in \cite[Proof of Theorem A]{BBP2013}, we obtain that the strict transform $E'$ of $E$ via $f$ is an irreducible component of $\mathbf{B}_{+}(f_{*}P(D))$. But then, by \cite[Theorem 5.2, item (b)]{Ein1} and \cite[Example 5.5]{Ein1}, we have $(f_{*}P(D))^{2n-1}\cdot E'=0$, and we have (cf.\ \cite[Exercise 23.2]{GHJ2003})
\[
q_X(f_{*}P(D),E')\int (f_{*}P(D))^{2n}=q_X(f_{*}P(D))\int(f_{*}P(D))^{2n-1}\cdot E'=0,
\]
which forces $q_X(f_{*}P(D),E')=0$, because $f_{*}(P(D))$ is big and nef, and hence 
\[
\int (f_{*}P(D))^{2n} > 0.
\] 
But then also $q_X(P(D),E)=0$, because $f_{*}$ is an isometry (with respect to the BBF forms), hence we conclude that $E \in \mathrm{Null}_{q_X}(P(D))$.\end{proof}

\begin{Rem}
We observe that if $D$ lies in $\mathrm{int}(\mathrm{Mov}(X))$, then $\mathbf{B}_+(D)$ does not contain divisorial irreducible components. Indeed, by assumption $q_X(D,E)>0$ for any prime exceptional divisor $E$, hence $\mathbf{B}_+(D)$ does not have any divisorial irreducible component, by the above proposition.
\end{Rem} 

\begin{Def}\label{defMBM}
Let $D$ be any big $\mathbf{R}$-divisor, and fix a log MMP for the pair $(X,\epsilon P(D))$, with $0<\epsilon \ll 1$. We define $\mathrm{MBM}(D)$ to be the set of rational curves in $\mathrm{Cont}(X)$ which are flipped while running the chosen log MMP for $(X,\epsilon P(D))$, or contracted at the end of the chosen log MMP for $(X,\epsilon P(D))$. 
\end{Def}

\begin{Rem}
Let us explain Definition $\ref{defMBM}$. Given any big $\mathbf{R}$-divisor $D$ on $X$, a curve $C \in \mathrm{MBM}(D)$ satisfies one of the two following possibilities:

$\bullet$ There exists a birational map $f\colon X \DashedArrow X'$, arising from the chosen log MMP for the pair $(X,\epsilon P(D))$, with $C \not \subset \mathrm{Ind}(f)$, such that $C'=f_{*}(C)$ (the strict transform of $C$ via $f$) is flipped at the next step of the chosen log MMP.

$\bullet$ The curve $C$ "survives" until the end of the chosen log MMP. Let $f\colon X \DashedArrow X'$ be a model on which $f_*(P(D))=P(D')$ (where $D'=f_*(D)$) is nef (i.e.\ where the chosen log MMP terminates). In this case  $f_*(C)=C'$ is contracted by the morphism induced by a big and nef integral divisor class lying in the relative interior of the minimal (with respect to the dimension) extremal face of the nef cone of $X'$ containing the class of $P(D')$. Clearly, in this case $P(D')$ is not ample.
\end{Rem} 

A priori, the elements of $\mathrm{MBM}(D)$ could depend on the log MMP we have chosen and any time we write $\mathrm{MBM}(D)$, this has to be thought of with respect to a fixed log MMP for $(X,\epsilon P(D))$.

\begin{Rem}\label{Rem:leqBplus}
   Let $D$ be any big Cartier $\mathbf{R}$-divisor on any normal complex projective variety $Y$. We observe that if $C$ is an irreducible curve with $C \not \subset \mathbf{B}_+(D)$ then $D \cdot C >0$. Indeed, by definition of $\mathbf{B}_+(D)$, we can write $P=A+E$, where $A$ is Cartier and ample, $E$ is Cartier and effective, and $C \not \subset \mathrm{Supp}(E)$. It follows that if $D\cdot C \leq 0$ for some irreducible curve $C$, $C \subset \mathbf{B}_+(D)$.
   Note that if $C$ is a curve lying in $\mathbf{B}_+(D)$, it is not true that $D\cdot C\leq 0$, in general, also when $[D]\in \mathrm{int}(\overline{\mathrm{Mov}(X)})$ (see the subsection "The example of Hassett and Tschinkel" for a counterexample). 
\end{Rem}

The following proposition is a consequence of \cite[Theorem 1.2]{MatsushitaZhang2013}, \cite[Proof of Theorem A]{BBP2013}, and \cite[Theorem 1]{Kaw91}. It gives a recipe to compute the augmented base locus of any big divisor on a projective HK manifold. The main limitation of this method is that to compute the augmented base locus of a big divisor one must know how to run a log MMP for the positive part of the divisor.

\begin{Prop}\label{Prop:B+MBM}
Let $D$ be any big $\mathbf{R}$-divisor on a projective HK manifold $X$, then
\[
\mathbf{B}_+(D) = \bigcup_{C \in \mathrm{MBM}(P(D))} \mathrm{Supp}(C).
\]
\end{Prop}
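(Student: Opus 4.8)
The plan is to combine the structural reductions already established with a careful analysis of a fixed log MMP for the pair $(X, \epsilon P(D))$. First I would reduce to the case $D = P(D)$ movable and big: by Proposition \ref{lem:B+positivepart} we have $\mathbf{B}_+(D) = \mathbf{B}_+(P(D))$, and $\mathrm{MBM}(P(D))$ is defined in terms of $P(D)$ alone, so it suffices to prove $\mathbf{B}_+(P(D)) = \bigcup_{C \in \mathrm{MBM}(P(D))}\mathrm{Supp}(C)$. Write $M = P(D)$. Fix a log MMP for $(X, \epsilon M)$ with $0 < \epsilon \ll 1$; since $K_X \equiv 0$, this is really an MMP for the movable big divisor $M$ itself. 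By \cite[Theorem 1.2]{MatsushitaZhang2013} this MMP runs with only flips (no divisorial contractions, because $M$ is movable so each step is a flip between HK manifolds) and terminates with a birational map $f\colon X \DashedArrow X'$ onto a projective HK manifold on which $f_*M$ is nef; then $f_*M$ is semiample (here one uses that the known deformation classes satisfy the relevant conjecture, or invokes the base-point-free-type statement available in this setting), so $\mathbf{B}_+(f_*M)$ equals the exceptional locus $\mathrm{Null}(f_*M)$ of the contraction $g\colon X' \to Z$ defined by $f_*M$, by Nakamaye's theorem \cite[Theorem 5.2, Example 5.5]{Ein1} together with Kawamata's \cite[Theorem 1]{Kaw91}.

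Next I would prove the inclusion $\supseteq$. Take $C \in \mathrm{MBM}(M)$. If $C$ is flipped at some stage, say $g_i \colon X_i \DashedArrow X_{i+1}$ is the flip and $h_i \colon X \DashedArrow X_i$ is the composite of the earlier flips with $C \not\subset \mathrm{Ind}(h_i)$ and strict transform $C_i = (h_i)_*C$, then $C_i$ spans a $(h_i)_*M$-negative extremal ray, hence $(h_i)_*M \cdot C_i < 0$, so $C_i \subset \mathbf{B}_+((h_i)_*M)$ by Remark \ref{Rem:leqBplus}; pulling back along the isomorphism in codimension one $h_i$ (which is an isometry and identifies augmented base loci away from the finitely many flipped/exceptional loci — here one argues as in \cite[Proof of Theorem A]{BBP2013} that strict transforms of $\mathbf{B}_+$-components are $\mathbf{B}_+$-components) gives $C \subset \mathbf{B}_+(M)$. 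If instead $C$ survives to the end, then $f_*C$ is contracted by $g\colon X' \to Z$, so $f_*M \cdot f_*C = 0$ and $f_*C$ lies in the exceptional locus $\mathrm{Null}(f_*M) = \mathbf{B}_+(f_*M)$; again transporting back along $f$ yields $C \subset \mathbf{B}_+(M)$. For $\subseteq$, I would argue that $\mathbf{B}_+(M)$ is, on $X'$, carried isomorphically in codimension one onto (a subset of) $\mathbf{B}_+(f_*M) \cup (\text{flipped loci})$; every point of $\mathbf{B}_+(f_*M)$ lies on a fiber of $g$, which is rationally chain connected (Kawamata, \cite[Theorem 1]{Kaw91}) and hence covered by rational curves contracted by $g$, i.e. by curves whose strict transforms back on $X$ are surviving members of $\mathrm{MBM}(M)$; and every point of a flipped locus at stage $i$ lies on a $(h_i)_*M$-negative extremal rational curve (extremal rays on HK manifolds and their birational models are spanned by rational curves, by the cone theorem for $K \equiv 0$ pairs combined with the wall-divisor/MBM correspondence recalled in Section \ref{Section1}), whose strict transform on $X$ is the corresponding flipped element of $\mathrm{MBM}(M)$. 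Thus every point of $\mathbf{B}_+(M)$ lies on the support of some $C \in \mathrm{MBM}(M)$.

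The main obstacle I expect is bookkeeping the strict transforms across the sequence of flips: one must show that an irreducible component $V$ of $\mathbf{B}_+(M)$, after passing through all the flips preceding the stage at which it is either flipped or contracted, has a well-defined strict transform that is not contained in the indeterminacy locus of the relevant map, and that the $\mathbf{B}_+$-membership is genuinely preserved under these pseudo-isomorphisms. This is exactly the content of the argument in \cite[Proof of Theorem A]{BBP2013}, adapted to the HK setting where the MMP consists only of flips; the key technical point is that a flip $X_i \DashedArrow X_{i+1}$ is an isomorphism outside codimension-$\geq 2$ loci on both sides, so for a divisorial component $V$ the strict transform is unproblematic, while for a higher-codimension component one uses that $V$ is covered by the MBM curves already produced and tracks those curves instead of $V$ itself. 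A secondary subtlety is the very last step: identifying $\mathbf{B}_+(f_*M)$ with the exceptional locus of the morphism $g$, which requires $f_*M$ to be semiample (not merely nef); this is where the standing hypotheses on $X$ — or the reduction to the positive part together with the structure of nef-but-not-big classes on HK manifolds — must be invoked, and is the reason the statement is phrased for projective HK manifolds where this is known.
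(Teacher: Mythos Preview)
Your proposal is essentially correct and follows the same strategy as the paper: reduce to $D$ movable via Proposition~\ref{lem:B+positivepart}, run the log MMP of \cite{MatsushitaZhang2013} (only flips), and combine Kawamata's theorem \cite{Kaw91} with the $\mathbf{B}_+$-tracking argument of \cite[Proof of Theorem A]{BBP2013}. The paper organizes the inclusion $\mathbf{B}_+(D)\subset\mathrm{MBM}(D)$ by following a single \emph{point} $x_0\in\mathbf{B}_+(D)$ through the sequence of flips rather than a component, which makes it transparent that the rational curve produced by Kawamata at stage $i$ has a well-defined strict transform on $X$ passing through $x_0$; your component-level phrasing is equivalent but you should be explicit that the curve found on $X_i$ is not contained in the indeterminacy locus of the inverse map, which the pointwise argument guarantees automatically.

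Two small corrections. First, your worry about semiampleness at the terminal step is misplaced: $f_*M$ is big and nef with $K_{X'}\equiv 0$, so for $\mathbf{Q}$-divisors the base point free theorem gives semiampleness outright, and for $\mathbf{R}$-divisors one uses that $\mathrm{Nef}(X')$ is locally rational polyhedral inside $\mathrm{Big}(X')$ to write $f_*M$ as a positive combination of big, nef, rational classes on the same face (the paper does exactly this in the proof of Corollary~\ref{CorToThm:stablebaselocus}); no ``standing hypotheses'' beyond $K\equiv 0$ are needed. Second, your invocation of Kawamata for the fibers of the final contraction $g$ skips a step: since $K_{X'}\equiv 0$ is not $g$-ample, one must choose an auxiliary klt pair. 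The paper takes $(X', D_n-\epsilon A)$ with $A$ ample and $0<\epsilon\ll 1$, checks via Kleiman's relative criterion that $-(K_{X'}+D_n-\epsilon A)\equiv -(D_n-\epsilon A)$ is $g$-ample, and then applies \cite[Theorem 1]{Kaw91}. You should make this explicit.
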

\begin{proof}
 Since $\mathbf{B}_+(D) = \mathbf{B}_+(P(D))$, we can assume that $D$ is movable (i.e.\ its class lies in $\mathrm{Mov}(X)$).
    
By running the log MMP associated with $\mathrm{MBM}(D)$, for the pair $(X,\varepsilon D)$, for sufficiently small $\varepsilon \in \mathbf{Q}$, we obtain a finite sequence of log flips (cf.\ \cite[Theorem 1.2]{MatsushitaZhang2013})
    \begin{equation}\label{MMP}
        \begin{tikzcd}
            X_0 = X\ar[rr,dashed, "\varphi_0"]\ar[dr,swap,"\pi_0^-"]& & X_1\ar[rr,dashed,"\varphi_1"] \ar[dl,"\pi_0^+"] \ar[dr,swap,"\pi_1^-"]  & & X_2 \ar[dl,"\pi_1^+"] \ar[r,dashed]& \dots \ar[r,dashed] & X_{n-1}\ar[rr,dashed, "\varphi_{n-1}"] \ar[dr,swap,"\pi_{n-1}^-"] && X_n \ar[dl,"\pi_{n-1}^+"]\\
             & Z_0 & & Z_1 & & & &Z_{n-1}
        \end{tikzcd}
    \end{equation}

    such that, if we set $D_0 = D$, $D_i:= \varphi_{i-1,*} D_{i-1}$, and 
    \[
    \varphi := \varphi_{n-1}\circ\cdots \circ \varphi_0,
    \]
    for any $i=1,\dots,n$, $\varphi_{*}(D)$ is big and nef on $X_n$. Without loss of generality, we may assume that $D_n$ is not ample. So, let $f\colon X_n \to Y$ be the birational morphism contracting the extremal face $D_n^{\perp}\cap \overline{\mathrm{NE}(X)}$ of the Mori cone $\overline{\mathrm{NE}(X)}$ (\cite[Theorem 3.7]{KM98}). Now, choose a point $x_0$ in $\mathbf{B}_+(D)$, and define recursively $x_i:= \varphi_{i-1} (x_{i-1})$ (whenever we can do it), for any $i=1,\dots,n-1$. Suppose that $x_0\in \mathrm{Exc}(\pi^{-}_0)$. Then, by \cite[Theorem1]{Kaw91} there exists a rational curve $C$ passing through $x_0$ that is contracted by $\pi_0$. Clearly $D\cdot C <0$, hence $C \subset \mathrm{MBM}(D)$.
    
    If $x_0 \notin \mathrm{Exc}(\pi^{-}_0)$, let $V_0$ be an irreducible component of $\mathbf{B}_{+}(D)$ containing it. Arguing as in \cite[Proof of Theorem A]{BBP2013}, we see that the strict transform $V_1$ of $V_0$ via $\varphi_0$ is an irreducible component of $\mathbf{B}_{+}(D_1)$. 
    If $x_1 \in \mathrm{Exc}(\pi^{-}_1)$, arguing as above, we find a rational curve $C_1$ on $X_1$ containing $x_1$ that is contracted by $\pi_1^{-}$. Moreover, we have $D_1\cdot C_1 <0$. The strict transform $C$ of this curve via $\phi_0^{-1}$ is a rational curve passing through $x_0$ which belongs to $\mathbf{B}_+(D)$. It is important to point out that it could hold $D \cdot C \geq 0$, even though $D_1\cdot C_1 <0$. If  $x_1 \notin \mathrm{Exc}(\pi^{-}_1)$, the strict transform $V_2$ of $V_1$ via $\varphi_1$ is an irreducible component of $\mathbf{B}_+(D_2)$ and we can continue like this for finitely many times. If it never happens that $x_i \in \mathrm{Exc}(\pi_i^{-})$, for any $i=1,\dots,n-1$, then $x_n \in \mathrm{Exc}(f)=\mathbf{B}_+(D_n)$, and $D_n$ is nef. The face $D_n^{\perp}\cap \overline{\mathrm{NE}(X_n)}$ which is $(D_n-\epsilon A)$-negative (and $D_n-\epsilon A$ is big for $\epsilon\ll 0$), is contracted by $f$. Now, we choose $0<\epsilon \ll 1$ such that $D-\epsilon A$ is a big $\mathbf{Q}$-divisor.  Using the Kleiman's criterion for $f$-ampleness (cf.\ \cite[Theorem 1.44]{KM98}), we see that $-(D_n-\epsilon A)$ is $f$-ample. Up to a rescaling, we can assume that $(X,D_n-\epsilon A)$ is klt. Then, using again \cite[Theorem 1]{Kaw91}, we conclude that through $x_n$ passes a rational curve $C_n$ which is contracted by $f$, and $D_n\cdot C_n=0$. We conclude that the strict transform $C$ of $C_n$ via $\varphi^{-1}$ is a rational curve passing through $x_0$, contained in $\mathbf{B}_+(D)$. This shows that $\mathbf{B_+}(D)\subset \mathrm{MBM}(D)$. 
     
    On the other hand, let $C$ be a curve lying in $\mathrm{MBM}(D)$. Suppose that at a certain point the strict transform $C_i$ of $C$ via $\varphi_{i-1}$ is contracted by $\pi^{-}_i$. Then $C_i \subset \mathbf{B}_+(D_i)$. Let $V_i$ be the irreducible component of $\mathbf{B}_+(D_i)$ containing $C_i$. Arguing as in \cite[Proof of Theorem A]{BBP2013}, we conclude that the strict transform $V$ of $V_i$ via $(\varphi_{i-1} \circ \cdots\circ \varphi_1 \circ \varphi)^{-1}$ is an irreducible component of $\mathbf{B}_+(D)$, and hence $C \subset \mathbf{B}_+(D)$. Otherwise, $D_n \cdot C_n=0$, because $D_n$ is nef and $C_n$ can be contracted. But then $C_n \subset \mathbf{B}_+(D_n)$, by Remark \ref{Rem:leqBplus}, and arguing once again as in \cite[Theorem A]{BBP2013}, we obtain that $C \subset \mathbf{B}_+(D)$ and $\mathrm{MBM}(D) \subset \mathbf{B}_+(D)$, and we are done.
\end{proof}

\begin{Rem}
Let $D$ be any big $\mathbf{R}$-divisor on $X$. It is important to point out that if an irreducible component of $\mathbf{B}_+(D),\mathbf{B}_-(D)=\mathbf{B}(D)$ can be contracted on $X$, it is covered by a family of rational curves which are contracted by the morphism contracting the component, by \cite[Theorem 1]{Kaw91}.
\end{Rem}

\begin{Rem}\label{Rem:inducedratmap}
 It is worth observing that, for an integral divisor $D$, Proposition \ref{Prop:B+MBM} is consistent with \cite[Theorem A]{BCL2014}. Indeed, adopting the same notation of Proposition \ref{Prop:B+MBM}, every $\varphi_i$ is a birational map which is an isomorphism away from its indeterminacy locus (see for example \cite[4.4-(i)]{Huybrechts1999}). Moreover $\text{Ind}(\varphi_i)=\mathrm{Exc}(\pi^{-}_i)$, for any $i=1,\dots,n-1$. Since $\codim(\text{Ind}(\varphi_i))\geq 2$ we have that
 \begin{equation}\label{eq:globalsections}
        H^0(X,D_0)\cong H^0(X,D_i)
    \end{equation}
    for any $D_i$. Moreover, as $D_n$ is big and nef, $mD_n$ is globally generated for any integer $m\gg 0$, by the base point free theorem.
    
Let $\psi$ be the morphism induced by $|mD_n|$, for $m$ large enough. Then, the map $\psi \circ \varphi$ is induced by $mD$, for any $m\gg 0$.  Let $f_{|mD|}$  be the map to some projective space induced by $|mD|$, for any $m\gg 0$. By \cite[Theorem 1]{BCL2014}, $\mathbf{B}_+(D)$ coincides with the locus where $f_{|mD|}$ is not an isomorphism. But $f_{|mD|}=\psi \circ \varphi$, because of \ref{eq:globalsections}, and in Proposition \ref{Prop:B+MBM} we proved that $\mathbf{B}_+(D)=\mathrm{MBM}(D)$, which is exactly the locus where $\psi\circ \varphi$ (and so $f_{|mD|}$) is not an isomorphism.
 \end{Rem}
 
\begin{Cor}\label{CorToThm:stablebaselocus}
Let $D$ be any big $\mathbf{R}$-divisor on $X$. Then, the divisorial part of $\mathbf{B}_{-}(D)$ consists of the prime exceptional divisors supporting the negative part of $D$, while the higher codimension irreducible components of $\mathbf{B}_{-}(D)$ are union of curves in $\mathrm{MBM}(P(D))$ which at some step of the log MMP associated with $\mathrm{MBM}(P(D))$ get flipped.
\begin{proof}
The divisorial irreducible components of $\mathbf{B}_-(D)$ are the prime exceptional divisors supporting $N(D)$ by \cite[Theorem 4.1]{Kuronya2}, for example. This shows the first part of the statement. Now, by Proposition \ref{positivepart}, we can assume that $D$ is movable (i.e.\ $N(D)=0$).  Let $f\colon X \DashedArrow X'$ be the birational map coming from the log MMP associated with $\mathrm{MBM}(D)$. If $D$ is rational, we are done, by the proof of Proposition \ref{Prop:B+MBM}. If $D$ is irrational, we can find rational, big and nef divisors $D_1',\dots,D_k'$ on $X'$, such that $f_{*}(D)=\sum_ia_iD_i'$ (with $a_i>0$ for any $i$), because $\mathrm{Nef}(X')$ is locally rational polyhedral in the big cone. If we set $D_i=f^{*}(D_i')$, we have $D=\sum_ia_iD_i$, so that $\mathbf{B}_{-}(D_i)=\mathrm{Ind}(f)$, for any $i$. Clearly, $\mathrm{Ind}(f)\subset \mathbf{B}_{-}(D)$. On the other hand $\mathbf{B}_{-}(D)\subset \cup_i \mathbf{B}_{-}(D_i)=\mathrm{Ind}(f)$, and we are done.
\end{proof}
\end{Cor}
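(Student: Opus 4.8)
The plan is to handle the divisorial components and the higher--codimensional components of $\mathbf{B}_-(D)$ separately, reduce the latter to the big movable case via Proposition~\ref{positivepart}, and then read off the answer from the sequence of flips constructed in the proof of Proposition~\ref{Prop:B+MBM}.

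\emph{Divisorial part.} I would start from Proposition~\ref{positivepart}, which gives $\mathbf{B}_-(D)=\mathbf{B}_-(P(D))\cup\mathrm{Supp}(N(D))$. Since $P(D)$ is big (\cite[Proposition 3.8]{Boucksom}) with class in $\overline{\mathrm{Mov}(X)}$, it is movable, so $\mathbf{B}_-(P(D))=\mathbf{B}(P(D))$ carries no divisorial component; and $\mathrm{Supp}(N(D))$ is a union of prime exceptional divisors by Theorem~\ref{DivZarDec}(2). Hence the divisorial part of $\mathbf{B}_-(D)$ is exactly $\mathrm{Supp}(N(D))$ --- alternatively one may directly quote the standard identification of the divisorial part of the restricted base locus with the support of the negative part of the divisorial Zariski decomposition, e.g.\ \cite[Theorem 4.1]{Kuronya2}.

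\emph{Higher--codimensional part, rational case.} Using Proposition~\ref{positivepart} again I may replace $D$ by $P(D)$ and assume $D$ is big and movable, with $\mathbf{B}_-(D)=\mathbf{B}(D)$ by \cite[Theorem A]{BBP2013}. If the class of $D$ is rational, rescale it to a $\mathbf{Q}$--divisor and revisit the proof of Proposition~\ref{Prop:B+MBM}, which runs the MMP for $(X,\epsilon D)$ and ends with $D_n=\varphi_*D$ big and nef on $X_n$. There the curves produced at the final contraction $f'\colon X_n\to Y$ are precisely those with $D_n\cdot C_n=0$; since $D_n$ is big and nef it is semiample, so $\mathbf{B}(D_n)=\emptyset$ while $\mathbf{B}_+(D_n)=\mathrm{Exc}(f')$, and because $X\dashrightarrow X_n$ is an isomorphism in codimension one we have $H^0(X,mD)\cong H^0(X,mD_n)$, so $\mathbf{B}(D)$ agrees with $\mathbf{B}(D_n)=\emptyset$ away from the flipping loci $\bigcup_i\mathrm{Exc}(\pi_i^-)$. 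Thus $\mathbf{B}_-(D)=\mathbf{B}(D)\subseteq\bigcup_i\mathrm{Exc}(\pi_i^-)$; conversely each flipped curve $C\in\mathrm{MBM}(D)$ has $D_i\cdot C_i<0$ at the stage where it is flipped, hence lies in $\mathbf{B}_-(D_i)$, and its strict transform lies in $\mathbf{B}_-(D)$ by the codimension--one argument of \cite[Proof of Theorem A]{BBP2013}. So $\mathbf{B}_-(D)$ is the union of the flipping loci, each swept out by curves of $\mathrm{MBM}(D)$ that get flipped.

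\emph{Higher--codimensional part, irrational case.} Let $f\colon X\dashrightarrow X'$ be the birational map from the log MMP associated with $\mathrm{MBM}(D)$, so $f_*(D)$ is big and nef on $X'$. Since $\mathrm{Nef}(X')$ is locally rational polyhedral inside the big cone, write $f_*(D)=\sum_i a_iD_i'$ with $a_i>0$ and $D_i'$ rational, big and nef; each $D_i'$ is semiample by the base--point--free theorem, so $\mathbf{B}(D_i')=\emptyset$. Setting $D_i=f^*(D_i')$ gives $D=\sum_i a_iD_i$, and as $f$ is an isomorphism in codimension one the sections of $D_i'$ pull back, forcing $\mathbf{B}_-(D_i)=\mathbf{B}(D_i)=\mathrm{Ind}(f)=\bigcup_j\mathrm{Exc}(\pi_j^-)$, where the inclusion $\supseteq$ uses that every flipping locus is covered by rational curves meeting the transform of $D$ negatively at the relevant stage, which land in $\mathbf{B}_-(D_i)$ after transport. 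Then by subadditivity $\mathbf{B}_-(D)=\mathbf{B}_-\big(\sum_i a_iD_i\big)\subseteq\bigcup_i\mathbf{B}_-(D_i)=\mathrm{Ind}(f)\subseteq\mathbf{B}_-(D)$, so $\mathbf{B}_-(D)=\bigcup_j\mathrm{Exc}(\pi_j^-)$, again covered by flipped curves of $\mathrm{MBM}(D)$. The delicate points I expect to be the main obstacle are: (i) making precise that $\mathbf{B}_-$ ($=\mathbf{B}$ for big divisors) is invisible to the final nef contraction --- this rests on $\mathbf{B}_-(D)=\mathbf{B}(D)$ and on the good behaviour of stable base loci under the small birational maps of the MMP, away from the flipping loci; and (ii) in the irrational case, the decomposition of $f_*(D)$ into rational big nef classes, which relies on the local rational polyhedrality of $\mathrm{Nef}(X')$ inside the big cone; the remainder is bookkeeping along the flips.
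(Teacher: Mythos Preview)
Your proposal is correct and follows essentially the same route as the paper: cite Kuronya (or equivalently use Proposition~\ref{positivepart}) for the divisorial part, reduce to the big movable case, identify $\mathbf{B}_-(D)=\mathbf{B}(D)$ with the union of the flipping loci $\mathrm{Ind}(f)$ via the MMP of Proposition~\ref{Prop:B+MBM} in the rational case, and in the irrational case use the local rational polyhedrality of $\mathrm{Nef}(X')$ in the big cone to decompose $f_*(D)$ into rational big nef pieces and conclude by subadditivity of $\mathbf{B}_-$. The only small wrinkle is that your justification of $\mathrm{Ind}(f)\subseteq\mathbf{B}_-(D_i)$ really pertains to $\mathbf{B}_-(D)$ (since the MMP is $D$-negative, not a priori $D_i$-negative), but this does not affect the chain of inclusions you need.
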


\begin{Prop}\label{prop:picard2}
Let $D$ be any big $\mathbf{R}$-divisor on $X$, and suppose $\rho(X)=2$. Then, either all the irreducible components of $\mathbf{B}_{+}(D)$ are divisorial, or non-divisorial. Moreover, if $\mathbf{B}_{+}(D)$ is divisorial, it is irreducible.
\end{Prop}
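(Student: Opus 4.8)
The plan is to reduce to the movable case and then distinguish according to the position of $P(D)$ relative to the movable cone.

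By Propositions~\ref{positivepart} and~\ref{lem:B+positivepart}, $\mathbf{B}_+(D)=\mathbf{B}_+(P(D))$ and the divisorial part of $\mathbf{B}_+(D)$ is exactly $\mathrm{Null}_{q_X}(P(D))$; so I may assume from the start that $D$ is big and movable, i.e.\ $D=P(D)$. Since $\rho(X)=2$ the form $q_X$ restricts to a form of signature $(1,1)$ on $N^1(X)_{\mathbf{R}}$, so the orthogonal complement of the big class $D$ is a single line. As $\mathrm{Null}_{q_X}(D)$ consists of prime exceptional divisors (the Remark after Proposition~\ref{lem:B+positivepart}) and distinct prime exceptional divisors have non-proportional numerical classes (they are rigid, hence determined by their classes), the line $D^{\perp}$ contains at most one such class: $\mathrm{Null}_{q_X}(D)$ has at most one element. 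This already settles the last assertion: if every irreducible component of $\mathbf{B}_+(D)$ is a divisor, then $\mathbf{B}_+(D)=\mathrm{Null}_{q_X}(D)$ is a single prime exceptional divisor, hence irreducible.

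For the dichotomy, suppose first $D\in\mathrm{int}\bigl(\mathrm{Mov}(X)\bigr)$. Then $q_X(D,E)>0$ for every prime exceptional divisor $E$, so $\mathrm{Null}_{q_X}(D)=\emptyset$ and $\mathbf{B}_+(D)$ has no divisorial component; all its components are then non-divisorial. Suppose now $D\in\partial\mathrm{Mov}(X)$ (still big). Then there is a unique prime exceptional divisor $E_0$ with $q_X(D,E_0)=0$, and $E_0\subseteq\mathbf{B}_+(D)$; I must show $\mathbf{B}_+(D)=E_0$, i.e.\ that $\mathbf{B}_+(D)$ has no component of codimension $\geq 2$. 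I would first pass, via \cite{MatsushitaZhang2013}, to a birational HK model $f\colon X\dashrightarrow X'$ on which $f_*D$ is big and nef. Under $f_*$ the movable cone of $X$ is identified with that of $X'$, so $f_*D$ lies on $\partial\mathrm{Mov}(X')$, and since $\mathrm{Nef}(X')\subseteq\mathrm{Mov}(X')$ this forces $f_*D$ onto a boundary ray of $\mathrm{Nef}(X')$ as well. The associated elementary contraction $g\colon X'\to Y$ of the dual extremal ray is birational (because $f_*D$ is big) and cannot be small — a small contraction would admit a flop producing a further chamber of $\mathrm{Mov}(X')$ beyond that boundary ray — hence is divisorial, with $\mathrm{Exc}(g)$ a single prime exceptional divisor. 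Therefore $\mathbf{B}_+(f_*D)=\mathrm{Exc}(g)$ is purely divisorial (and irreducible).

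The remaining point — which I expect to be the main obstacle — is to transfer this back to $X$. By Proposition~\ref{Prop:B+MBM} and Corollary~\ref{CorToThm:stablebaselocus} one has $\mathbf{B}_+(D)=f^{-1}_*\bigl(\mathrm{Exc}(g)\bigr)\cup\mathrm{Ind}(f)=E_0\cup\mathrm{Ind}(f)$, and $\mathrm{Ind}(f)$ has codimension $\geq 2$; so the conclusion $\mathbf{B}_+(D)=E_0$ amounts to showing $\mathrm{Ind}(f)\subseteq E_0$, equivalently that the log MMP for $(X,\varepsilon D)$ runs with no flip, equivalently that $D$ is already nef on $X$. This is exactly where $\rho(X)=2$ has to be used decisively: $\overline{\mathrm{NE}(X)}$ has only two extremal rays and, $D$ being big, at most one of them is $D$-negative; I would then analyze on each intermediate model $X_i$ of the MMP the interplay between the $D_i$-negative flopping ray and the $D_i$-orthogonal ray corresponding to the strict transform of $E_0$, and deduce from the hyperbolic geometry of $N^1(X_i)_{\mathbf{R}}$ that no flopping step can actually arise — so the MMP terminates at $X$ itself with the divisorial contraction $g$, giving $\mathbf{B}_+(D)=\mathrm{Exc}(g)=E_0$. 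Granting this last step, the dichotomy is complete; ruling out a flopping wall separating $\mathrm{Nef}(X)$ from the relevant boundary ray of $\mathrm{Mov}(X)$ is the crux.
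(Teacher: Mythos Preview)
Your approach diverges from the paper's. The paper does not split into cases according to whether $D$ lies in the interior or on the boundary of the movable cone; instead it runs a one-parameter family $D_\lambda = A-\lambda B$ (with $A,B$ ample), and argues that if a divisorial component $E$ is already present in $\mathbf{B}_+(D)$, then at the birth-time $\lambda_2$ of any further component $V$ one finds, on a suitable birational model, a covering family of curves $C_t$ with $P(D_{\lambda_2})\cdot C_t=0$. Since $\rho=2$ and $q_X(P(D_{\lambda_2}),E)=0$ as well, the dual class of $C_t$ is a positive multiple of $E$, whence $E\cdot C_t<0$ and $V\subset E$, a contradiction. This avoids any discussion of whether $D$ itself is nef.

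Your proof has a genuine gap, and your diagnosis of what remains is too strong. You assert that $\mathrm{Ind}(f)\subseteq E_0$ is \emph{equivalent} to the MMP having no flip, i.e.\ to $D$ being nef. This is false: $\mathrm{Ind}(f)\subseteq E_0$ is strictly weaker than $\mathrm{Ind}(f)=\emptyset$, and there is no reason for a big class on $\partial\mathrm{Mov}(X)$ to lie on $\partial\mathrm{Nef}(X)$ --- an intervening flopping wall is not excluded by the hyperbolic geometry you invoke. So the strategy ``show no flop occurs'' is aimed at a statement that may simply fail.

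What you actually need --- and what does hold --- is only $\mathrm{Ind}(f)\subseteq E_0$. This follows from an elementary sign computation, essentially the same mechanism the paper exploits. Write an ample class as $A=aD+bE_0$; from $q_X(A,D)=a\,q_X(D)>0$ and $q_X(A,E_0)=b\,q_X(E_0)>0$ one gets $a>0$ and $b<0$. If $C$ lies on a $D$-negative flopping ray, then $0<A\cdot C=a(D\cdot C)+b(E_0\cdot C)$ forces $E_0\cdot C<0$, hence $C\subset E_0$. The same argument applies on each intermediate model with the strict transform of $E_0$, so every flopping locus is contained in $E_0$, giving $\mathrm{Ind}(f)\subseteq E_0$ and $\mathbf{B}_+(D)=E_0$. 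With this correction your route goes through; but note that this key step is, in essence, the $\rho=2$ duality observation that drives the paper's argument.
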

\begin{proof}
 As $\mathbf{B}_{+}(D)=\mathbf{B}_{+}(P(D))$, we can assume that $D$ is movable. Write $D$  as $D=A-B$, where $A$ and $B$ are ample divisors on $X$. We can consider for any $\lambda \in [0,1]$ the divisor $D_{\lambda}=A-\lambda B$. If $0\leq \lambda_1\leq\lambda_2\leq 1$, $\mathbf{B}_+(D_{\lambda_1})\subset \mathbf{B}_+(D_{\lambda_2})$. Suppose $E$ is a prime divisor lying in $\mathbf{B}_+(D)$. Then, there exists $\lambda_1 \in  [0,1]$ such that $q_X(P(D_{\lambda_1}),E)=0$.
 Suppose that for some $\lambda_2\geq \lambda_1$ another component $V$ joins. Up to going on a birational model of $X$, we can assume that $V$ is covered by a family of rational curves $\{C_t\}_t$, such that $P(D_{\lambda_2})\cdot C_t=0$, for any $t$. But we also have $q_X(P(D_{\lambda_2}),E)=0$. This implies that the divisor dual to any of the curves $C_t$ is a positive multiple of $E$ because otherwise, $C_t$ would intersect negatively any ample divisor. Thus $E\cdot C_t<0$, and hence $C_t\subset E$. We conclude that $V\subset E$, thus $V=E$, because $V$ was assumed to be an irreducible component of $\mathbf{B}_+(D)$, and hence $\mathbf{B}_+(D)$ is irreducible. On the other hand, if $\mathbf{B}_+(D)$ has a non-divisorial irreducible component, all the irreducible components must be non-divisorial, because otherwise, repeating a similar argument to the one above, all the irreducible components would be contained in a divisorial irreducible component of $\mathbf{B}_+(D)$, and this would give a contradiction.
\end{proof}

\subsection{\large \itshape The asymptotic base loci are algebraically coisotropic}

For the next results, we need to introduce the notion of \emph{algebraically coisotropic subvarieties}, introduced by Voisin in \cite{Voisin16}. Let $X$ be a HK manifold of dimension $2n$ with symplectic form $\sigma \in H^{2,0}(X)$. Let $P \subset X$ be a subvariety of codimension $d$ and $P_{\mathrm{reg}}$ its regular locus. Then, the restriction of $\sigma$ to any $p \in P_{\mathrm{reg}}$ factors as
\begin{equation}
\begin{tikzcd}
    \sigma_{|P,p}: T_{P,p}\ar[r] & T_{X,p}\ar[r,"\cong"] & \Omega_{X,p}\ar[r] & \Omega_{P,p}
\end{tikzcd}
\end{equation}
where the first and last maps are given by the inclusion and restriction respectively. We say that $P$ is \emph{coisotropic} if $\sigma_{|P,p}$ has rank $2n-2d$ for every $p \in P_{\mathrm{reg}}$; if the codimension of $P$ is $n$, i.e.\ $\sigma|_{P_{\text{reg}}} = 0$, we say that $P$ is \emph{Lagrangian}. 

\begin{Def}\cite[Definition 0.5]{Voisin16} \label{defn:algebraicallycoisotropicvariety}
A subvariety $P\subset X$ of codimension $d$ is \emph{algebraically coisotropic} if it is coisotropic and admits a rational map $\phi: P\dashrightarrow B$ onto a variety of dimension $2n-2d$ such that $\sigma|_P = \phi^*\sigma_B$ for some $\sigma_B\in H^{2,0}(B)$.
\end{Def}

This last notion has received considerable attention in recent years, due to its connections with the Chow group of projective HK manifolds. Examples of algebraically coisotropic subvarieties for some of the known HK manifolds were constructed in \cite{Voisin16} (see also \cite{KLM, Lin2020}), and we will provide more. Recall that a symplectic resolution $\pi: X\to Z$ is a birational morphism from a HK manifold $X$ onto a normal variety $Z$.

\begin{Prop}\cite[Proposition 4.12]{BakkerLehn21}\label{Prop:BakkerLehn} Every irreducible component $P$ of the exceptional locus of a symplectic resolution $\pi: X\to Z$ is algebraically coisotropic and the coisotropic fibration of $P$ is given by the restriction $\pi|_P:P\to B:= \pi(P)$. Moreover, the general fiber of $\pi|_P$ is rationally connected.
\end{Prop}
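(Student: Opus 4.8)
The plan is to reduce the statement to the case of an isolated symplectic singularity, where it is classical, by means of Kaledin's local structure theory for symplectic varieties. Throughout, write $d:=\codim_X P$, so $\dim P=2n-d$, and set $B:=\pi(P)$. As a preliminary I would note that $Z$ itself has symplectic singularities: transporting $\sigma$ under the isomorphism $\pi^{-1}(Z_{\mathrm{reg}})\cong Z_{\mathrm{reg}}$ produces a symplectic form on $Z_{\mathrm{reg}}$ which, by construction, extends holomorphically on the resolution $\pi$; in particular $Z$ is Gorenstein with canonical singularities, $\pi$ is crepant, and --- by a theorem of Kaledin and Wierzba --- $\pi$ is \emph{semismall}. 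Semismallness already gives the easy inequality in the dimension count: the general fibre of $\pi|_P$ has dimension $\leq d$, i.e.\ $\dim B\geq 2n-2d$.

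Next I would identify $B$ with the closure of a symplectic leaf. Since the symplectic-leaf stratification of $Z$ is finite and $B$ is irreducible, $B$ meets a unique leaf $\cL$ in a dense open $B^{\circ}\subseteq B$. By Kaledin's local structure theorem, around a general point $z_{0}\in B^{\circ}$ there is an analytic isomorphism $(Z,z_{0})\cong(\cL,z_{0})\times(S,s_{0})$, compatible with the symplectic forms on the smooth loci, where $S$ is an isolated symplectic singularity of dimension $2n-\dim\cL$, and the given resolution $\pi$ restricts over this neighbourhood to $\mathrm{id}_{\cL}\times\pi_{S}$ for a symplectic resolution $\pi_{S}\colon\widetilde S\to S$. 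Since the fibre of $\pi$ is then constant along $\cL$, every component of $\pi^{-1}(\cL)$ surjects onto $\cL$; as $P$ is a component of $\mathrm{Exc}(\pi)$ whose generic point lies over $\cL$, it follows that $P$ surjects onto $\overline{\cL}$, so $B=\overline{\cL}$, $\dim B=\dim\cL$ is even, and $\sigma$ descends to a generically symplectic reflexive $2$-form $\sigma_{B}$ on $B$.

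Under the above decomposition $P$ corresponds, analytically-locally, to $\cL\times P_{S}$, where $P_{S}$ is an irreducible component of the central fibre $E_{S}:=\pi_{S}^{-1}(s_{0})=\mathrm{Exc}(\pi_{S})$, and $\pi|_{P}$ is identified with the first projection. Now I would invoke Wierzba's theorem for symplectic resolutions of isolated symplectic singularities: $E_{S}$ has pure dimension $\tfrac12\dim S$ and $\sigma_{\widetilde S}$ vanishes on $(E_{S})_{\mathrm{reg}}$. Hence $\dim P_{S}=\tfrac12\dim S=\tfrac12(2n-\dim\cL)$, and substituting into $\dim P=\dim\cL+\dim P_{S}=2n-d$ forces $\dim\cL=2n-2d$; so $\dim B=2n-2d$ and the general fibre of $\pi|_{P}$ has dimension exactly $d$. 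Moreover, since $\sigma=\pr_{\cL}^{*}\sigma_{\cL}+\pr_{\widetilde S}^{*}\sigma_{\widetilde S}$ on $\cL\times\widetilde S$ and $\sigma_{\widetilde S}|_{P_{S}}=0$, we obtain $\sigma|_{P_{\mathrm{reg}}}=(\pi|_{P})^{*}\sigma_{B}$ on a dense open of $P$, so $\sigma|_{P,p}$ has rank $\dim\cL=2n-2d$ for general $p$. To obtain coisotropy at \emph{every} $p\in P_{\mathrm{reg}}$ I would combine a dimension bound with semicontinuity: the kernel of $\sigma|_{T_{P,p}}$ is contained in the $\sigma$-orthogonal $T_{P,p}^{\perp}$, which has dimension $\codim_X P=d$, so $\operatorname{rank}\sigma|_{T_{P,p}}\geq 2n-2d$ everywhere on $P_{\mathrm{reg}}$; by upper semicontinuity of the rank and the generic computation, equality holds everywhere. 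Thus $P$ is coisotropic, its characteristic foliation is cut out by the fibres of $\pi|_{P}$, and the pair $(\pi|_{P}\colon P\to B,\ \sigma_{B})$ realizes $P$ as algebraically coisotropic in the sense of Definition~\ref{defn:algebraicallycoisotropicvariety}.

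For the last assertion, the general fibre of $\pi|_{P}$ is --- by the local picture --- an irreducible component $P_{S}$ of the central fibre of a symplectic resolution of an isolated symplectic singularity, and such fibres are rationally connected; one way to see this is to apply the Hacon--McKernan rational chain connectedness theorem to the resolution $\pi_{S}$ of the canonical singularity $S$ (so $E_{S}$ is rationally chain connected) and then upgrade using the equidimensionality of $E_{S}$ and the fact that its components are uniruled (e.g.\ by Kawamata's bound on lengths of extremal rational curves, \cite{Kaw91}). For the applications in this paper only the weaker conclusion that $P_{S}$ is covered by rational curves is needed, which is immediate from \cite{Kaw91}. I expect the genuine obstacles to lie in two places: (i) the precise form of Kaledin's local structure theorem --- one needs the \emph{resolution} $\pi$, not merely $Z$, to decompose as a product over a leaf, which is where the nontrivial input really enters and must be cited with care; and (ii) the passage from rational chain connectedness of the whole central fibre $E_{S}$ to rational connectedness of the single component $P_{S}$, which is delicate for singular fibres.
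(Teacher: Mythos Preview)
The paper does not contain a proof of this proposition: it is quoted verbatim from \cite[Proposition~4.12]{BakkerLehn21} and used as a black box (the sentence immediately after the statement is ``From the above proposition, we deduce the following''). So there is no ``paper's own proof'' to compare your proposal against.

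That said, your outline is essentially the standard route to the Bakker--Lehn result: reduce via Kaledin's stratification and local product decomposition to the isolated case, where Wierzba's theorem gives the Lagrangian property of the central fibre, and then invoke Hacon--McKernan (or Kawamata) for rational (chain) connectedness. You are right to flag the two delicate points. For (i), the product decomposition of the \emph{resolution} along a leaf is the genuine technical input and is indeed what Kaledin proves; you should cite the precise statement (e.g.\ Kaledin's formal/analytic product theorem) rather than just the stratification. For (ii), the step from rational chain connectedness of the whole central fibre to rational connectedness of a single irreducible component is not automatic in general; in the symplectic setting one typically argues that each component is smooth uniruled (via the semismallness and Kawamata's theorem) and then uses that a smooth proper uniruled variety which is rationally chain connected is rationally connected, or one appeals directly to the stronger statements in Bakker--Lehn. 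For the applications in this paper only ``covered by rational curves'' is needed, as you note, so your proposal is more than sufficient for the subsequent arguments.
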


From the above proposition, we deduce the following.

\begin{Cor}\label{Cor:B+MBM}
Let $D$ be a big $\R$-divisor on $X$. For any irreducible component $P$ of $\mathbf{B}_+(D)$, $\mathbf{B}(D)$ or $\mathbf{B}_{-}(D)$ there exists a birational map $\phi: X\dashrightarrow X'$ such that $\phi_*(P)$ is an algebraically coisotropic variety. In particular, $P$ has dimension at least $\mathrm{dim}(X)/2$.
\end{Cor}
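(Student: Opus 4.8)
The plan is to combine Proposition \ref{Prop:B+MBM} (and its companion Corollary \ref{CorToThm:stablebaselocus}), which reduces everything to the log MMP associated to $\mathrm{MBM}(P(D))$, with Proposition \ref{Prop:BakkerLehn} (Bakker--Lehn), which identifies exceptional loci of symplectic resolutions as algebraically coisotropic. By Proposition \ref{positivepart} we may assume $D$ is movable, i.e.\ $D=P(D)$; the divisorial components of $\mathbf{B}_-(D)=\mathbf{B}(D)$ when $N(D)\neq 0$ are prime exceptional divisors, which are themselves exceptional loci of divisorial contractions and hence covered by the same argument, so this reduction is harmless. Running the chosen log MMP for $(X,\varepsilon D)$ as in \eqref{MMP}, we obtain a sequence of flips and a final big-and-nef model $X_n$ with a birational contraction $f\colon X_n\to Y$ contracting the face $D_n^\perp\cap\overline{\mathrm{NE}(X_n)}$.

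First I would recall that each flipping contraction $\pi_i^-\colon X_i\to Z_i$ and the final contraction $f\colon X_n\to Y$ are crepant (since $K_{X_i}\equiv 0$), hence each $Z_i$ and $Y$ has symplectic (in particular canonical, Gorenstein) singularities, and $\pi_i^-$, $f$ are \emph{symplectic resolutions onto their images} when restricted to suitable open neighborhoods — more precisely, over the smooth locus of $Z_i$ (resp.\ $Y$) the map is an isomorphism, and over the singular locus it is a resolution with the symplectic form extending. The key point is that an irreducible component $W$ of $\mathrm{Exc}(\pi_i^-)$ (resp.\ of $\mathrm{Exc}(f)$) is, by Proposition \ref{Prop:BakkerLehn}, algebraically coisotropic in $X_i$ (resp.\ $X_n$), with coisotropic fibration given by $\pi_i^-|_W$ (resp.\ $f|_W$) and rationally connected general fiber. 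Then I would trace, for a given irreducible component $P$ of $\mathbf{B}_+(D)$, $\mathbf{B}(D)$ or $\mathbf{B}_-(D)$, where along the MMP it ``first appears'' as (a component of) an exceptional locus: by the analysis in the proof of Proposition \ref{Prop:B+MBM}, the strict transform $P_i$ of $P$ under $\varphi_{i-1}\circ\cdots\circ\varphi_0$ is a component of $\mathbf{B}_+(D_i)$ at each stage, and at some stage $i$ it must be contained in $\mathrm{Exc}(\pi_i^-)$, or else it survives to $X_n$ and lies in $\mathrm{Exc}(f)$ (using that $D_n$ is big and nef but not ample, so $\mathbf{B}_+(D_n)=\mathrm{Exc}(f)$). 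In that stage set $\phi := \varphi_{i-1}\circ\cdots\circ\varphi_0\colon X\dashrightarrow X_i$ (resp.\ $\phi := \varphi\colon X\dashrightarrow X_n$); then $\phi_*(P)=P_i$ is an irreducible component of the exceptional locus of the symplectic resolution $\pi_i^-$ (resp.\ $f$), hence algebraically coisotropic by Proposition \ref{Prop:BakkerLehn}.

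Finally, from $\phi_*(P)$ being coisotropic of codimension $d$ in the $2n$-dimensional manifold $X_i$, the symplectic form restricted to its smooth locus has rank $2n-2d\geq 0$, forcing $d\leq n$, i.e.\ $\dim\phi_*(P)=2n-d\geq n=\dim(X)/2$; since $\phi$ is birational, $\dim P=\dim\phi_*(P)\geq \dim(X)/2$, which is exactly the ``in particular'' clause (and it yields the stated Corollary about indeterminacy loci, since $\mathrm{Ind}(f)\subseteq \mathbf{B}_+$ of an appropriate class, or directly $\mathrm{Ind}(f)$ is an exceptional locus of a flipping/contracting step). The main obstacle I anticipate is the careful bookkeeping in the second step: one must ensure that the strict transform $P_i$ is genuinely an \emph{irreducible component} (not just a subvariety) of $\mathrm{Exc}(\pi_i^-)$ at the stage where it gets flipped/contracted, and that the strict transform under the birational map $\phi$ is well-defined on $P$ (i.e.\ $P\not\subset\mathrm{Ind}(\phi)$) — this requires invoking, as in \cite[Proof of Theorem A]{BBP2013}, that $\varphi_j$ is an isomorphism in codimension one so strict transforms of components of augmented base loci remain components, together with the observation that the locus $P_i$ that finally gets contracted is not itself contained in the indeterminacy locus of the earlier flips (which have center of codimension $\geq 2$ but need not contain $P_i$). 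Modulo this, the argument is a direct concatenation of the quoted results.
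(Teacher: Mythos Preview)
Your proposal is correct and follows essentially the same strategy as the paper: push the component $P$ along the log MMP until it appears as an irreducible component of the exceptional locus of a contraction on some HK model, then apply Proposition~\ref{Prop:BakkerLehn}. The paper's own proof is considerably terser --- it first reduces to the case of $\mathbf{B}_+(D)$ in one line, by invoking \cite[Lemma~1.14]{Ein2} and \cite[Proposition~2.8]{BBP2013} (every irreducible component of $\mathbf{B}_-(D)$ or $\mathbf{B}(D)$ is an irreducible component of $\mathbf{B}_+(D')$ for a nearby big $D'$), and then simply cites the contractibility of components of $\mathbf{B}_+(D)$ on a birational model together with Proposition~\ref{Prop:BakkerLehn}. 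Your more hands-on tracing through the MMP is a valid unpacking of that citation, and your identification of the one genuine bookkeeping issue (that the strict transform $P_i$ must be a \emph{component} of $\mathrm{Exc}(\pi_i^-)$ or $\mathrm{Exc}(f)$, not merely contained in it) is exactly right; the resolution via \cite[Proof of Theorem~A]{BBP2013} and the inclusion $\mathrm{Exc}(\pi_i^-)\subset\mathbf{B}_+(D_i)$ is the correct one. The only place your write-up is slightly loose is in treating components of $\mathbf{B}_-(D)$ and $\mathbf{B}(D)$ on the same footing as those of $\mathbf{B}_+(D)$ without the explicit reduction; the paper's use of \cite[Lemma~1.14]{Ein2} is the cleanest way to close that gap.
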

\begin{proof}
By \cite[Lemma 1.14]{Ein2} and \cite[Proposition 2.8]{BBP2013} it is enough to prove the Corollary for any irreducible component of $\mathbf{B}_+(D)$. But this follows from the fact that any irreducible component of $\mathbf{B}_+(D)$ is contractible on some birational model of $X$, and from Proposition \ref{Prop:BakkerLehn}.
\end{proof}

As a refinement of the so far conjectural Bloch-Beillinson filtration in the Chow group of a HK manifold, Voisin introduced the following filtration.

\begin{Def}\cite[Definition 0.2]{Voisin16}
Let $X$ be a projective HK manifold. We define $S_iX \subset X$ to be the set of points in $X$ whose orbit under rational equivalence has dimension $\geq i$. The filtration $S_\bullet$ is then defined by letting $S_i\mathrm{CH}_0(X)$ be the subgroup of $\mathrm{CH}_0(X)$ generated by classes of points $x\in S_iX$.
\end{Def}

Following Huybrechts \cite{Huybrechts2014}, we say that a subvariety $Z\subset X$ such that all points of $Z$ are rationally equivalent in $X$ is a \emph{constant cycle subvariety}. Proposition \ref{Prop:BakkerLehn} implies that the fibers of the coisotropic fibration of $P$ are rationally connected, in particular, they are constant cycle subvarieties. The following clarifies the connection between constant cycle subvarieties and algebraically coisotropic varieties.

\begin{Thm}\cite[Theorem 1.3]{Voisin16}\label{Thm:Orbitimpliesalgebraicallycoisotropic}
Let $Z$ be a codimension $i$ subvariety of a projective HK manifold $X$. Assume that any point of $Z$ has an orbit of dimension $\geq i$ under rational equivalence in X (that is $Z \subset S_iX$). Then $Z$ is algebraically coisotropic and the fibers of the isotropic fibration are $i$-dimensional orbits of $X$ for rational equivalence.
\end{Thm}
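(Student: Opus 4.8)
The plan is to show that the \emph{characteristic foliation} of $Z$, namely the distribution $z\mapsto \ker\bigl(\sigma_{|Z,z}\bigr)\subseteq T_{Z,z}$ on $Z_{\mathrm{reg}}$, is algebraically integrable, with leaves the $i$-dimensional rational-equivalence orbits inside $Z$, and then to descend $\sigma|_Z$ along it. The first move is to turn the pointwise hypothesis $Z\subseteq S_iX$ into an algebraic family. The rational-equivalence orbit of a point on a complex projective variety is a countable union of Zariski closed subsets; hence $S_iX$ is a countable union of closed subvarieties, each dominated by an algebraic family of constant cycle subvarieties of $X$ of dimension $\geq i$. As $Z$ is irreducible and contained in $S_iX$, it lies in one such subvariety, and restricting the corresponding family I get a variety $B$, an incidence $\mathcal{W}\subseteq B\times X$ with projections $p\colon\mathcal{W}\to B$, $q\colon\mathcal{W}\to X$, such that $q(\mathcal{W})=Z$, a general $z\in Z$ lies on a fiber $W_b:=q(\mathcal{W}_b)$, and each $W_b$ is a constant cycle subvariety of $X$ of dimension $\geq i$.

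\emph{The crux} is the transcendental input: $\sigma$ restricts to zero on the general $W_b$, equivalently $q^*\sigma$ vanishes along the fibers of $p$. This is a relative Mumford--Roitman vanishing: a constant cycle subvariety $W_b$ represents a single class in $\mathrm{CH}_0(X)$, so the family of $0$-cycles $\{W_b\}_{b\in B}$, after subtracting a fixed $0$-cycle, is fibrewise rationally trivial; applying the Bloch--Srinivas decomposition of the diagonal to this family over a general curve in $B$, and using that $H^{2,0}$ is of transcendental type (so cannot be supported over a proper subvariety), one finds that the induced action on $H^{2,0}(X)$ is trivial in the relevant sense, which is exactly $\sigma|_{W_b}=0$ for general $b$. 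I expect this step to be the main obstacle: it is the only genuinely transcendental ingredient, and it is where one passes from ``the points of $W_b$ are rationally equivalent in $X$'' to a statement about holomorphic forms.

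Next comes the linear algebra pinning down ranks and dimensions. Since $\sigma$ is non-degenerate on $X$ and $\dim Z=2n-i$, for $z\in Z_{\mathrm{reg}}$ one has $\ker(\sigma_{|Z,z})=T_{Z,z}\cap T_{Z,z}^{\perp_\sigma}$, of dimension at most $i$. For general $z$ the vanishing above gives $T_{W_b,z}\subseteq\ker(\sigma_{|Z,z})$, of dimension at least $i$. Hence the kernel has dimension exactly $i$ and $\sigma|_Z$ has rank $2n-2i$ at a general point, i.e.\ $Z$ is coisotropic in the sense recalled before Definition \ref{defn:algebraicallycoisotropicvariety}; moreover $T_{W_b,z}$ is precisely the characteristic distribution, so the $W_b$ are generically the leaves of the characteristic foliation. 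That foliation is involutive because $d\sigma=0$ (the kernel of a closed $2$-form of locally constant rank is involutive), and its leaves are algebraic since they are the fibers $W_b$; so it defines a dominant rational map $\phi\colon Z\dashrightarrow B'$ with $\dim B'=(2n-i)-i=2n-2i$ whose general fibers are the $W_b$.

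Finally, $\sigma|_Z$ is basic for this foliation: it kills leafwise tangent vectors by construction, and for $v$ tangent to the leaves Cartan's formula gives $\mathcal{L}_v(\sigma|_Z)=d\,\iota_v(\sigma|_Z)+\iota_v\,d(\sigma|_Z)=0$. Hence $\sigma|_Z$ descends to a closed holomorphic $2$-form on a dense open of $B'$, which extends to $\sigma_{B'}\in H^{2,0}(B)$ on a smooth projective model $B$ of $B'$; by construction $\phi^*\sigma_{B'}=\sigma|_Z$, and since the rank of $\sigma|_Z$ equals $2n-2i=\dim B'$ the form $\sigma_{B'}$ is generically non-degenerate. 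This is exactly Definition \ref{defn:algebraicallycoisotropicvariety}, so $Z$ is algebraically coisotropic with coisotropic fibration $\phi$. That the general fiber $W_b$ is an $i$-dimensional orbit, rather than merely a constant cycle subvariety, follows by a last refinement of the construction in the first paragraph: the orbit of a general $z\in Z$ has dimension $\geq i$ since $z\in S_iX$, it contains the $i$-dimensional constant cycle subvariety $W_b$, and a strictly larger constant cycle subvariety through $z$ would, by the Mumford-type vanishing applied again, force $\ker(\sigma|_Z)$ (or of the coisotropic subvariety swept out by the orbits) to have dimension $>i$, contradicting the rank computation — so $W_b$ is exactly the orbit of $z$.
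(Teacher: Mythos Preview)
This theorem is not proved in the present paper: it is quoted from \cite{Voisin16} (Theorem~1.3 there) and invoked as a black box in the proof of Proposition~\ref{Prop:B_+isalgebraicallycoisotropic}. There is therefore no argument here to compare your proposal against.

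For what it is worth, your sketch follows the shape of Voisin's own proof, but there is one genuine gap. You assert that after ``restricting the corresponding family'' you obtain $q(\mathcal{W})=Z$, i.e.\ that the constant cycle subvarieties $W_b$ through points of $Z$ are contained in $Z$. The hypothesis $Z\subset S_iX$ only furnishes $i$-dimensional constant cycle subvarieties of $X$ passing through points of $Z$; nothing a priori forces them into $Z$. In Voisin's argument this containment is not assumed but \emph{deduced}: the strong form of the Mumford--Roitman vanishing --- that the vertical tangent bundle of $p$ lies in the kernel of $q^{*}\sigma$ as a $2$-form on $\mathcal{W}$, not merely that $\sigma|_{W_b}=0$ fibrewise --- bounds the rank of $\sigma$ on $Z':=q(\mathcal{W})$ and forces $\codim_X Z'\ge i$; since $Z\subset Z'$ and $\codim_X Z=i$, one concludes $Z'=Z$. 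Your later implication ``the vanishing above gives $T_{W_b,z}\subseteq\ker(\sigma_{|Z,z})$'' also requires this strong form: isotropy of $T_{W_b,z}$ in $T_{X,z}$ alone does not yield $T_{W_b,z}\subset T_{Z,z}^{\perp_\sigma}$ (in a $6$-dimensional symplectic space with standard basis, take $V=\langle e_1,f_1,e_2,f_2\rangle$ and the isotropic $W=\langle e_1,e_2\rangle\subset V$; then $\ker(\sigma|_V)=0$). Once these two points are tightened, the rest of your outline --- algebraic integrability of the characteristic foliation and descent of $\sigma|_Z$ via Cartan's formula --- is correct and matches Voisin.
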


\begin{Prop}\label{Prop:B_+isalgebraicallycoisotropic}
Let $D$ be a big $\R$-divisor on $X$. For any irreducible component $P$ of $\mathbf{B}_+(D)$, $\mathbf{B}(D)$ or $\mathbf{B}_{-}(D)$ of codimension $i$ we have that $P\subset S_iX$. In particular, $P$ is algebraically coisotropic. 
\end{Prop}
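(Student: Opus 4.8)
The ``in particular'' is immediate from Theorem \ref{Thm:Orbitimpliesalgebraicallycoisotropic}, so the whole content is the inclusion $P\subseteq S_iX$ for $i:=\codim_X P$. First I would reduce: as in the proof of Corollary \ref{Cor:B+MBM}, by \cite[Lemma 1.14]{Ein2} and \cite[Proposition 2.8]{BBP2013} it suffices to treat an irreducible component $P$ of $\mathbf{B}_+(D)$, and by Proposition \ref{positivepart} one may assume $D$ movable, so $D=P(D)$. The plan is then to exhibit, through \emph{every} point of $P$, a rationally chain connected --- hence constant cycle --- subvariety of dimension $\ge i$, after which Theorem \ref{Thm:Orbitimpliesalgebraicallycoisotropic} concludes.

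Next I would set up the geometry via the MMP, exactly as in the proof of Proposition \ref{Prop:B+MBM}: running the log MMP for $(X,\varepsilon D)$ gives a chain of flips $\varphi_m\colon X_m\DashedArrow X_{m+1}$ (each an isomorphism on the complement of the flipping locus $\mathrm{Exc}(\pi_m^-)=\mathrm{Ind}(\varphi_m)$) followed by a nef model $X_n$ with $D_n:=\varphi_*(D)$ nef and the contraction $f\colon X_n\to Y$ of the face $D_n^{\perp}\cap\overline{\mathrm{NE}(X_n)}$. Fix $x\in P$ and follow its trajectory $x=x_0\rightsquigarrow x_1\rightsquigarrow\cdots$ along the flips, stopping at the first index $j$ with $x_j\in\mathrm{Exc}(\pi_j^-)$ (and set $j=n$, $\pi_j:=f$, if it never stops). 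No $P_m:=(\varphi_{m-1}\circ\cdots\circ\varphi_0)_*P$ with $m<j$ can be contained in $\mathrm{Exc}(\pi_m^-)$ --- otherwise the trajectory would stop earlier --- so each such $\varphi_m$ restricts to a birational morphism $P_m\to P_{m+1}$; hence $P_j$ is a codimension-$i$ component of $\mathbf{B}_+(D_j)$ (using \cite[Proof of Theorem A]{BBP2013}), the composite $\phi\colon X\DashedArrow X_j$ is a local isomorphism near $x$, and $x_j\in P_j\cap\mathrm{Exc}(\pi_j)$.

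The key step is the analysis of the fibre through $x_j$. The morphism $\pi_j$ is birational from a hyper-Kähler manifold onto a normal variety with symplectic --- hence klt --- singularities, so its fibre $\Phi:=\pi_j^{-1}(\pi_j(x_j))$ is rationally chain connected; in particular all its points are rationally equivalent in $X_j$, whence $\mathrm{Orb}_{X_j}(x_j)\supseteq\Phi$. Since $\pi_j$ contracts a $D_j$-negative ray (resp.\ the face $D_n^{\perp}\cap\overline{\mathrm{NE}(X_n)}$), every component of $\mathrm{Exc}(\pi_j)$ is swept out by $D_j$-non-positive curves, so $\mathrm{Exc}(\pi_j)\subseteq\mathbf{B}_+(D_j)$ by Remark \ref{Rem:leqBplus}; as $P_j$ is a component of $\mathbf{B}_+(D_j)$, the component $E$ of $\mathrm{Exc}(\pi_j)$ through $x_j$ satisfies $E\subseteq P_j$ --- so $\codim E\ge i$ --- unless $x_j$ also lies on a component of $\mathbf{B}_+(D_j)$ of codimension $<i$. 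In the former case Proposition \ref{Prop:BakkerLehn} (equidimensionality of the coisotropic fibration $\pi_j|_E$) gives $\dim\Phi\ge\dim(\pi_j|_E)^{-1}(\pi_j(x_j))\ge\codim E\ge i$, so $\dim\mathrm{Orb}_{X_j}(x_j)\ge i$. Transporting back: since $x_j\notin\mathrm{Ind}(\phi^{-1})$ and $\phi^{-1}$ (a composition of flips, reversed) is a local isomorphism on its domain, $\mathrm{Orb}_{X_j}(x_j)\cap\mathrm{dom}(\phi^{-1})$ is dense in $\mathrm{Orb}_{X_j}(x_j)$ and embeds via $\phi^{-1}$ into $\mathrm{Orb}_X(x)$; hence $\dim\mathrm{Orb}_X(x)\ge i$, i.e.\ $x\in S_iX$.

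The points not covered by this argument are those $x$ for which $x_j$ above lands on a strictly lower-codimensional component of $\mathbf{B}_+(D_j)$; transporting that component back to $X$, such $x$ lie in $P\cap\tilde P$ for some component $\tilde P$ of $\mathbf{B}_+(D)$ with $\codim\tilde P<i$, so they form a proper closed subset $P^{\mathrm{bad}}\subsetneq P$. Each irreducible piece of $P^{\mathrm{bad}}$ is again the strict transform of a (higher-codimension) exceptional component of one of the $\pi_m$, so re-running the same argument strictly decreases dimension, and a Noetherian induction finishes $P\subseteq S_iX$. The main obstacle is precisely this last point: $S_iX$ is not Zariski closed in general, so ``$P$ is generically contained in $S_iX$'' does not formally give $P\subseteq S_iX$, and one must genuinely control the behaviour over the locus where the contracting birational map degenerates --- this is where it is essential that the $\varphi_m$ are flips (isomorphisms away from the flipping locus) and that the exceptional fibrations of birational contractions of hyper-Kähler manifolds are equidimensional of the expected dimension.
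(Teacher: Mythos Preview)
Your overall strategy --- follow each point $x\in P$ through the MMP, stop at the first flip (or at the final contraction) whose exceptional locus catches the image $x_j$, and then use the coisotropic structure of the exceptional component $E\ni x_j$ to produce a large rationally chain connected locus through $x_j$ --- is different from what the paper does, and the ``good'' case ($\codim E\ge i$) is essentially fine once you invoke Hacon--McKernan for rational chain connectedness of the fibres and note that \emph{every} component of the fibre of $\pi_j|_E$ has dimension $\ge\codim E$, not just the generic one.

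The gap is in the ``bad'' case and the proposed Noetherian induction. When the trajectory of $x$ stops early on a component $E$ with $\codim E=i'<i$, your fibre argument only yields a constant cycle subvariety through $x_j$ of dimension $\ge i'$, hence only $x\in S_{i'}X$, which is strictly weaker than $x\in S_iX$ (recall $S_iX\subset S_{i'}X$ for $i>i'$). Passing to the smaller set $P^{\mathrm{bad}}\subset P\cap\tilde P$ does not help: $P^{\mathrm{bad}}$ has higher codimension in $X$, but it is \emph{not} a component of $\mathbf{B}_+(D)$ and there is no reason it is an exceptional component of any $\pi_m$, so you cannot re-run the same argument with a larger $i$. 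Nothing in your induction produces the required $i$-dimensional orbit through such $x$; the dimension that decreases (that of $P^{\mathrm{bad}}$) is not the one that controls the $S_\bullet$-index.

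The paper circumvents this entirely. It passes once to a model $X'$ on which $P'=\phi_*P$ is a component of the exceptional locus of a symplectic contraction $\pi$; by Proposition~\ref{Prop:BakkerLehn} the restriction $\pi|_{P'}\colon P'\to B'$ has rationally connected general fibre of dimension $i$, and since $B'$ carries a generically nondegenerate holomorphic $2$-form it is not uniruled, so $\pi|_{P'}$ is the MRC fibration of $P'$. By birational invariance the composite $P\dashrightarrow B'$ is the MRC fibration of $P$, hence almost holomorphic with rationally connected general fibre of dimension $i$. The crucial step is then a \emph{properness} argument: fixing the Hilbert polynomial of the fibres with respect to an ample class, the relative Hilbert scheme (or Kontsevich space) is proper, so through \emph{every} point $p\in P$ there is a flat limit of those fibres, which is rationally chain connected of dimension $i$. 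This gives $P\subset S_iX$ uniformly, with no case distinction for points in $\mathrm{Ind}(\phi)$ and no induction.
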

\begin{proof}
As in Corollary \ref{Cor:B+MBM} it is enough to prove the statement for an irreducible component $P\subset \mathbf{B}_+(D)$ of codimension $i$. There exists a birational map $\phi:X\dashrightarrow X'$ that induces a birational map $\phi_{|P}:P\dashrightarrow P'$. The proof of Proposition \ref{Prop:BakkerLehn} implies that $P'\subset S_iX'$. Therefore we need to prove that this is the case for $P$. Notice that the contraction map $\pi_{|P'}:P'\to B'$ is the \emph{MRC fibration} of $P'$ (see \cite[Section IV.5]{Kollar1996}, or\cite{GHS2003}, or \cite[Section 3]{Voisin2021} for properties of the MRC fibration). Therefore, by restricting to a possible smaller open subset we can assume that the rational map $\phi: P\dashrightarrow B'$ induced by composition is the MRC fibration of $P$ as well. In particular, since $\phi$ is almost holomorphic, we can assume that there exists an open subset $U\subset P$ such that $\phi|_U: U \to B'$ is flat and the fibers are rationally connected subvarieties of $P$. Fix an ample class on $P$, then the fibers of the map  $\phi|_U: U\to B'$ have the same Hilbert polynomial with respect to the fixed ample class. The relative Hilbert scheme (or the Kontsevich space of stable maps) is proper \cite{Kollar1996}, hence for every point $p\in P$ there exists a limit of the rationally connected varieties $\phi^{-1}(b)$ containing $p$. Since the limits of rationally connected varieties are rationally chain connected, we obtain that the orbit under rational equivalence in $P$ is of dimension $\geq i$, therefore $P\subset S_iX$. We conclude that the variety is algebraically coisotropic by Theorem \ref{Thm:Orbitimpliesalgebraicallycoisotropic}.
\end{proof}

We conclude this section by proving the first three items of Theorem \ref{Thm:baselociHK}.

\begin{proof1}
 The irreducible components of $\mathbf{B}_+(D), \mathbf{B}_-(D)=\mathbf{B}(D)$ are algebraically coisotropic  by Proposition \ref{Prop:B_+isalgebraicallycoisotropic}. The bound from below for the dimension of any such irreducible component follows from the definition of algebraically coisotropic subvariety. The statement about the divisorial irreducible components of $\mathbf{B}_+(D)$ (resp.\ $\mathbf{B}_-(D)$) is proven in Proposition \ref{lem:B+positivepart} (resp.\ Corollary \ref{CorToThm:stablebaselocus}). The rest directly follows from Proposition \ref{Prop:B+MBM} and Corollary \ref{CorToThm:stablebaselocus}. This concludes the proof of the first three items of Theorem \ref{Thm:baselociHK}.
\end{proof1}

\section{\bf \scshape Stable Classes and Destabilizing Numbers}\label{Section3}
In this section, we describe the parts of the big cone of a projective HK manifold where the augmented base loci stay constant. 

\begin{Rem}
By \cite[Theorem A]{BBP2013}, a big divisor $D$ on $X$ is unstable if and only if $\mathbf{B}(D) \subsetneq \mathbf{B}_{+}(D)$.
\end{Rem}

\begin{Prop}\label{Prop:unstabledivisors}
 A big, movable divisor $D$ on $X$ is unstable if and only if there exist a birational hyper-Kähler model $X'$ of $X$, a birational map $f\colon X \DashedArrow X'$, a rational curve $C\in \mathrm{Cont}(X)$, with $C \not\subset \mathrm{Ind}(f)$, such that $f_{*}(D)$ is nef on $X'$, and $f_{*}(D) \cdot f_{*}(C)=0$.
\end{Prop}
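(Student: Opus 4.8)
The strategy is to derive the claim from Proposition~\ref{Prop:B+MBM} and Corollary~\ref{CorToThm:stablebaselocus} (which describe $\mathbf{B}_+(D)$ and $\mathbf{B}_-(D)=\mathbf{B}(D)$ via a log MMP for $(X,\epsilon D)$), from Remark~\ref{Rem:leqBplus}, and from the principle used repeatedly above — ``arguing as in \cite[Proof of Theorem~A]{BBP2013}'' — that an irreducible component of $\mathbf{B}_+$, respectively of $\mathbf{B}(D)$, which is not contained in the indeterminacy locus of a birational hyper-Kähler map transforms into an irreducible component of $\mathbf{B}_+$, respectively into a subset of $\mathbf{B}$, of the pushed-forward divisor. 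Since $D$ is movable, $N(D)=0$ and $\mathbf{B}_-(D)=\mathbf{B}(D)$.

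\emph{Necessity.} Assume $D$ is unstable, i.e.\ $\mathbf{B}_-(D)\subsetneq\mathbf{B}_+(D)$, and fix a log MMP for $(X,\epsilon D)$ yielding $f\colon X\DashedArrow X'$ with $f_*(D)$ big and nef. By Proposition~\ref{Prop:B+MBM}, $\mathbf{B}_+(D)=\bigcup_{C\in\mathrm{MBM}(D)}\mathrm{Supp}(C)$, while by Corollary~\ref{CorToThm:stablebaselocus} the union of those $C\in\mathrm{MBM}(D)$ that get flipped at some stage is exactly $\mathbf{B}_-(D)=\mathbf{B}(D)$. Choose $x\in\mathbf{B}_+(D)\setminus\mathbf{B}_-(D)$; it lies on some $C\in\mathrm{MBM}(D)$, and $C$ cannot be flipped, for otherwise $C\subseteq\mathbf{B}_-(D)$ and $x\in\mathbf{B}_-(D)$. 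Hence $C$ is contracted at the end of the chosen MMP, so by the Remark following Definition~\ref{defMBM} the strict transform $f_*(C)$ is contracted by the morphism $\phi\colon X'\to Y$ induced by a big and nef class $H$ in the relative interior of the minimal face $F$ of $\mathrm{Nef}(X')$ containing $f_*(D)$. Since $f_*(D)$ also lies in the relative interior of $F$ and $H\cdot f_*(C)=0$, a standard property of faces of a cone and its dual gives $f_*(D)\cdot f_*(C)=0$. Moreover $C\in\mathrm{MBM}(D)\subseteq\mathrm{Cont}(X)$, and $C\not\subset\mathrm{Ind}(f)$ because its strict transform was never contained in a flipping locus. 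Thus $(X',f,C)$ is the desired datum.

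\emph{Sufficiency.} Conversely, let $(X',f,C)$ be as in the statement. As $f_*(C)$ is an irreducible curve with $f_*(D)\cdot f_*(C)=0$, Remark~\ref{Rem:leqBplus} gives $f_*(C)\subseteq\mathbf{B}_+(f_*(D))$; let $V'$ be an irreducible component of $\mathbf{B}_+(f_*(D))$ containing $f_*(C)$. Since $f^{-1}$ is defined at the general point of $f_*(C)$ (indeed $(f^{-1})_*(f_*(C))=C$), the component $V'$ meets the complement of $\mathrm{Ind}(f^{-1})$, so $V'\not\subset\mathrm{Ind}(f^{-1})$; transforming $V'$ by $f^{-1}$ produces an irreducible component $V$ of $\mathbf{B}_+(D)$ with $C=(f^{-1})_*(f_*(C))\subseteq V$, whence $C\subseteq\mathbf{B}_+(D)$. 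On the other hand $f_*(D)$ is nef (and big), so $\mathbf{B}(f_*(D))=\mathbf{B}_-(f_*(D))=\emptyset$. Were $C$ contained in $\mathbf{B}_-(D)=\mathbf{B}(D)$, an irreducible component $W$ of $\mathbf{B}(D)$ would contain $C$ and hence meet the complement of $\mathrm{Ind}(f)$; pushing $W$ forward by $f$ (pulling back effective divisors $\mathbf{R}$-linearly equivalent to $f_*(D)$) would then yield a nonempty subset of $\mathbf{B}(f_*(D))=\emptyset$, a contradiction. Therefore $C\subseteq\mathbf{B}_+(D)\setminus\mathbf{B}_-(D)$, so $\mathbf{B}_-(D)\subsetneq\mathbf{B}_+(D)$ and $D$ is unstable.

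\emph{Main difficulty.} Once Proposition~\ref{Prop:B+MBM} and Corollary~\ref{CorToThm:stablebaselocus} are available the substance is done, and what is left is careful bookkeeping; the delicate points — and the places where the hyper-Kähler birational geometry is really used — are the compatibility of the augmented and stable base loci with the birational maps of the MMP (already isolated in the proofs of Propositions~\ref{lem:B+positivepart} and~\ref{Prop:B+MBM}), the relative-interior computation showing that a curve of $\mathrm{MBM}(D)$ contracted at the end of the MMP satisfies $f_*(D)\cdot f_*(C)=0$ on the terminal model, and the observation that such a curve, being never flipped, is not contained in $\mathrm{Ind}(f)$. I expect the first of these — transporting irreducible components of $\mathbf{B}_+$ and $\mathbf{B}$ across the flips without losing or gaining components in the indeterminacy loci — to be the part whose verification needs the most care.
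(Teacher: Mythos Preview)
Your necessity argument is essentially the paper's: both choose a point in $\mathbf{B}_+(D)\setminus\mathbf{B}(D)$, use Proposition~\ref{Prop:B+MBM} and Corollary~\ref{CorToThm:stablebaselocus} to produce a curve $C\in\mathrm{MBM}(D)$ that is never flipped and hence survives to the terminal model, and then read off $f_*(D)\cdot f_*(C)=0$ from the minimal-face description in the Remark after Definition~\ref{defMBM}.

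For sufficiency you take a genuinely different route. The paper factors the given $f$ as a sequence of flops, treats this sequence as a log MMP for $(X,\epsilon D)$, checks that $C\in\mathrm{MBM}(D)$ for this run, and concludes via Proposition~\ref{Prop:B+MBM} together with $\mathbf{B}(D)=\mathrm{Ind}(f)$. You instead work directly on $X'$: Remark~\ref{Rem:leqBplus} gives $f_*(C)\subset\mathbf{B}_+(f_*(D))$, nefness (plus $\mathbf{B}=\mathbf{B}_-$ for big classes) gives $\mathbf{B}(f_*(D))=\emptyset$, and you transport both facts back through $f^{-1}$ using the bijection between effective $\mathbf{R}$-divisors on $X$ and $X'$. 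Your version is arguably cleaner, since it sidesteps the paper's implicit claim that the \emph{given} $f$ can itself be realised as a run of the $D$-MMP. The cost is that your transport of an irreducible component of $\mathbf{B}_+$ (and of $\mathbf{B}$) is invoked across an arbitrary birational HK map rather than a single flip; since any such map decomposes into flops and the BBP argument is applied in both directions already in the proof of Proposition~\ref{Prop:B+MBM}, this is legitimate, but you should say explicitly that you factor $f$ into flops and apply the transport step by step.
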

\begin{proof} 
If the divisor $D$ is unstable, then $\mathbf{B}(D)\subsetneq \mathbf{B}_+(D)$. Let $p$ be a point lying in $\mathbf{B}_+(D)$ but not in $\mathbf{B}(D)$, and consider a log MMP 
\begin{equation}\label{lastdiagram}
            \begin{tikzcd}
                X=X_1 \arrow[r, dashed,"\varphi_1"] \arrow[rrr, "f", bend left = 35]& X_2 \arrow[r, dashed,"\varphi_2"]  & \cdots \arrow[r, dashed, "\varphi_{k-1}"]  & X_k            
               \end{tikzcd}
\end{equation}
for the pair $(X,\epsilon D)$. We know that $\mathbf{B}(D)=\mathrm{Ind}(f)$, hence $p$ does not belong to the indeterminacy locus of $f$. By Proposition \ref{Prop:B+MBM} and Corollary \ref{CorToThm:stablebaselocus}, there exists a rational curve $C$ passing through $p$, belonging to $\mathrm{MBM}(D)$, which survives until the end of the log MMP we have chosen. In particular, we must have $f_*(D)\cdot f_*(C)=0$, because, since $C$ lies in $\mathrm{MBM}(D)$, it must be contracted by the morphism induced by a certain line bundle, whose class lies in the relative interior of the minimal extremal face of the nef cone of $X_k$ containing $f_*(D)$.
Now, suppose  $f\colon X \DashedArrow X'$ is a birational map of HK manifolds and $C\in \mathrm{Cont}(X)$ a rational curve with $C \not\subset \mathrm{Ind}(f)$, such that $f_{*}(D)$ is nef on $X'$, and $f_{*}(D) \cdot f_{*}(C)=0$, we want to show that $\mathbf{B}(D) \subsetneq \mathbf{B}_+(D)$, i.e.\ that $D$ is unstable. The birational map $f$ is a composition of log flips. For simplicity, let us keep the notation of diagram (\ref{lastdiagram}). Note that $\varphi_{i-1,*}(C)$ is not contained in the indeterminacy locus of $\varphi_{i}$, for any $i=1,\dots,k-1$, and we set $\varphi_{0,*}(C)=C$. Again, the curve $\varphi_{k-1,*}(C)$ is contracted by the morphism induced by a certain line bundle, whose class lies in the relative interior of the minimal extremal face of the nef cone of $X_k$ containing $f_*(D)$, because $f_*(D)\cdot f_*(C)=0$. This implies that $C \in \mathrm{MBM}(D)$ (here $\mathrm{MBM}(D)$ is taken with respect to the log MMP in diagram (\ref{lastdiagram})), and so, by Proposition \ref{Prop:B+MBM}, we have that $C \subset \mathbf{B}_+(D)$ and $C \not\subset \mathbf{B}(D)=\mathrm{Ind}(f)$, hence $D$ is unstable.
\end{proof}

\begin{Prop}\label{Prop:destabnumbers}
Let $\lambda$ be a destabilizing number for a big, integral divisor $D$ on $X$, with respect to an ample, integral divisor $A$. If $D-\lambda A$ is big, $\lambda$ is rational.
\begin{proof}
We know that $D-\lambda A$ is unstable if and only if $\mathbf{B}(D-\lambda A) \subsetneq \mathbf{B}_+(D-\lambda A)$. In particular, up to going on a birational model of $X$, if $D-\lambda A$  is unstable, there exists a rational curve $C$, such that $P(D-\lambda A)\cdot C=0$, by Proposition \ref{Prop:unstabledivisors}. Then, $\lambda$ is forced to be a rational number, and this concludes the proof.
\end{proof}
\end{Prop}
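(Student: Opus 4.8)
The plan is to turn the destabilising value $\lambda$ into a single affine equation with rational coefficients, built from the BBF form and a rational curve on a birational model. The three steps are: \emph{(i)} show that $D-\lambda A$ is unstable; \emph{(ii)} apply Proposition \ref{Prop:unstabledivisors} to the big movable divisor $P(D-\lambda A)$ to obtain a projective hyper-Kähler manifold $X'$, a birational map $f\colon X\DashedArrow X'$, and a rational curve $C\in\mathrm{Cont}(X)$ with $C\not\subset\mathrm{Ind}(f)$ such that $f_*P(D-\lambda A)$ is nef on $X'$ and $f_*P(D-\lambda A)\cdot f_*C=0$; \emph{(iii)} show that $\mu\mapsto f_*P(D-\mu A)\cdot f_*C$ is a continuous, piecewise-$\mathbf{Q}$-affine function of $\mu$ with rational breakpoints which is strictly positive for $\mu$ slightly below $\lambda$ and vanishes at $\mu=\lambda$, so that $\lambda$ must be one of its rational zeros.

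\emph{Steps (i) and (ii).} The class $D-\lambda A$ is big by hypothesis. Since the stable classes form an open subset of $\mathrm{Big}(X)$ on which the augmented base locus is locally constant (see \cite{Ein2} and this section), if $D-\lambda A$ were stable the map $\mu\mapsto\mathbf{B}_+(D-\mu A)$ would be constant on a neighbourhood of $\lambda$, contradicting $\mathbf{B}_+(D-\alpha A)\subsetneq\mathbf{B}_+(D-\beta A)$ for all rational $\alpha<\lambda<\beta$; hence $D-\lambda A$ is unstable. By Propositions \ref{positivepart} and \ref{lem:B+positivepart} one has $\mathbf{B}_+(D-\lambda A)=\mathbf{B}_+(P(D-\lambda A))$ and $\mathbf{B}_-(D-\lambda A)=\mathbf{B}_-(P(D-\lambda A))\cup\mathrm{Supp}\,N(D-\lambda A)$ with $\mathrm{Supp}\,N(D-\lambda A)\subset\mathbf{B}_+(P(D-\lambda A))$, so the strict inclusion $\mathbf{B}_-\subsetneq\mathbf{B}_+$ also holds for the big movable divisor $P(D-\lambda A)$ and Proposition \ref{Prop:unstabledivisors} applies, producing the data of step (ii); moreover, running the argument with a base point $p\in\mathbf{B}_+(D-\lambda A)\setminus\mathbf{B}_-(D-\lambda A)$ (non-empty by instability), we may take $C$ to pass through $p$. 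Birational maps of hyper-Kähler manifolds being isomorphisms in codimension one, $f_*$ is an isometry of BBF forms that commutes with the decomposition of Theorem \ref{DivZarDec}; writing $D':=f_*D$, $A':=f_*A$ (integral divisors on $X'$) and $\gamma:=[f_*C]\in H_2(X',\mathbf{Z})$, step (ii) reads $P(D'-\lambda A')\cdot\gamma=0$ with $P(D'-\lambda A')$ nef.

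\emph{Step (iii).} Put $g(\mu):=P(D'-\mu A')\cdot\gamma=f_*P(D-\mu A)\cdot f_*C$ for $\mu$ near $\lambda$. By continuity of the divisorial Zariski decomposition (\cite{Boucksom}), $g$ is continuous. On any subinterval where $\mathrm{Supp}\,N(D'-\mu A')$ equals a fixed finite set $\{E_i\}$, orthogonality in Theorem \ref{DivZarDec} forces the coefficients of $N(D'-\mu A')$ to solve $\sum_j q_{X'}(E_i,E_j)\,n_j=q_{X'}(D',E_i)-\mu\,q_{X'}(A',E_i)$, a linear system whose matrix is integral and invertible (negative-definite) and whose right-hand side is integral and affine in $\mu$; hence $N(D'-\mu A')$, and therefore $g$, is $\mathbf{Q}$-affine there, and the values of $\mu$ at which $\mathrm{Supp}\,N$ changes (where some $n_i(\mu)$ or some $q_{X'}(D'-\mu A',E)$ vanishes) are rational. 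Thus $g$ is continuous, piecewise-$\mathbf{Q}$-affine with rational breakpoints, and $g(\lambda)=0$. Since $C$ passes through $p\notin\mathbf{B}_-(D-\lambda A)$, it covers a component $V$ of $\mathbf{B}_+(D-\lambda A)$ with $V\not\subset\mathbf{B}_-(D-\lambda A)$; and since $\mathbf{B}_+(D-\mu A)=\mathbf{B}(D-(\mu+\delta)A)\subset\mathbf{B}_-(D-\lambda A)$ for $\mu<\lambda$ and $0<\delta\ll1$ with $\mu+\delta<\lambda$, the component $V$ is not in $\mathbf{B}_+(D-\mu A)$, so for such $\mu$ the nef class $f_*P(D-\mu A)$ on $X'$ does not contract $f_*C$, i.e.\ $g(\mu)>0$. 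Being $\mathbf{Q}$-affine with (necessarily negative) slope on the piece immediately to the left of $\lambda$ and vanishing at $\lambda$, $g$ forces $\lambda\in\mathbf{Q}$.

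\emph{Expected main obstacle.} The one substantive point is the strict positivity $g(\mu)>0$ for $\mu$ just below $\lambda$: it says that the log MMP for $(X,\varepsilon P(D-\mu A))$ is a proper truncation of the one for $(X,\varepsilon P(D-\lambda A))$, so that after $f$ the positive part already lands in the interior of the relevant face of $\mathrm{Nef}(X')$ and stays off the wall $\gamma^{\perp}$. Making this rigorous is precisely what the stability-chamber analysis of this section provides, and is the step that genuinely uses the hyper-Kähler hypothesis beyond Proposition \ref{Prop:unstabledivisors}.
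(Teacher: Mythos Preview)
Your strategy matches the paper's --- use Proposition~\ref{Prop:unstabledivisors} to produce a birational model $X'$ and a curve $C'=f_*C$ with $f_*P(D-\lambda A)\cdot C'=0$, then deduce rationality --- but you supply considerably more detail than the paper, which after obtaining that equation simply asserts ``$\lambda$ is forced to be a rational number'' with no further argument. Your unpacking of this step into the piecewise $\mathbf{Q}$-affine structure of $g(\mu)=f_*P(D-\mu A)\cdot\gamma$, via the orthogonality system of Theorem~\ref{DivZarDec} (integral negative-definite Gram matrix, integral affine right-hand side), is exactly the mechanism one needs and is correctly set up.

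The genuine gap is the one you yourself isolate. To conclude you need the $\mathbf{Q}$-affine piece of $g$ through $\lambda$ to have nonzero slope, and you argue this via $g(\mu)>0$ for $\mu$ just below $\lambda$ on the grounds that ``the nef class $f_*P(D-\mu A)$ on $X'$ does not contract $f_*C$''. But nothing guarantees that $f_*P(D-\mu A)$ is nef on $X'$ for $\mu<\lambda$: the point $P(D-\lambda A)$ sits on the boundary wall $\gamma^\perp$ of the chamber $f^*(\mathrm{Nef}(X'))$, and as $\mu$ decreases the path $\mu\mapsto P(D-\mu A)$ could equally well cross that wall into the adjacent Mori chamber rather than move into the interior --- in which case $f_*P(D-\mu A)$ is not nef and in fact $g(\mu)<0$, so your implication ``$V\not\subset\mathbf{B}_+(D-\mu A)\Rightarrow g(\mu)>0$'' fails. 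Your claim that the log MMP for $\mu$ is a ``proper truncation'' of that for $\lambda$ presupposes exactly the direction of wall-crossing that must be shown. A cleaner repair: you only need $g\not\equiv 0$ on the piece, so argue by contradiction --- if $g\equiv 0$ on a left-neighbourhood of $\lambda$, then $f_*P(D-\mu A)$ stays on $\gamma^\perp$, hence (by continuity and local rational-polyhedrality of $\mathrm{Nef}(X')$ in the big cone) remains nef, so $C'\subset\mathbf{B}_+(f_*P(D-\mu A))$ by Remark~\ref{Rem:leqBplus}; pulling back along $f$ (an isomorphism off $\mathrm{Ind}(f)=\mathbf{B}_-(P(D-\lambda A))$, which does not contain $V$) gives $V\subset\mathbf{B}_+(D-\mu A)$, contradicting what you already established from the destabilising hypothesis.
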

We point out that, in general, the destabilizing numbers are not rational numbers. An example of the irrationality of such numbers can be found in \cite{Bau}, as well as the proof that the destabilizing numbers on smooth projective surfaces are rational.
 \begin{Rem}
Note that, in the above proposition, the assumption on $D-\lambda A$ to be big cannot be dropped. Indeed, if $\lambda$ is a destabilizing number for $D-\lambda A$, then $D-\lambda A$ is pseudo-effective but not big, and lying on an irrational ray of $\overline{\mathrm{Eff}(X)}$, $\lambda$ must be irrational (because otherwise, $D-\lambda A$ would be rational).
 \end{Rem}

We now put all the above together to prove the item (4) of Theorem \ref{Thm:baselociHK}.

\begin{proof2}
 The part concerning the unstable divisors is proven in Proposition \ref{Prop:unstabledivisors}, while the part concerning the destabilizing numbers is proven in Proposition \ref{Prop:destabnumbers}.
\end{proof2}

The rest of this section is devoted to study how the asymptotic base loci of a big divisor vary when perturbing the divisor.

\begin{Def}\label{defnstabchamb}
Let $D$ be a stable big $\mathbf{R}$-divisor on $X$. The stability chamber of $D$ is defined as
\[
\mathrm{SC}(D):=\left\{D' \in \mathrm{Big}(X) \; | \; \mathbf{B}_{+}(D')=\mathbf{B}_{+}(D)\right\}.
\]
\end{Def}
Notice that by \cite[Proposition 1.26]{Ein2} and \cite[Proposition 1.24, item (iii)]{Ein2}, it follows that any stability chamber in $\mathrm{Big}(X)$ has a non-empty interior, thus, being stable is an open condition.
\vspace{0.2cm}

We notice that, in general, the stability chambers are not convex subcones of the big cone. An example of this pathology is provided in \cite[Example 3.1]{LMR2020} for Mori dream spaces. We provide an example of this pathology on a projective HK manifold of $\text{K3}^{[2]}$-type which is not a Mori dream space (see the subsection "The example of Hassett and Tschinkel"). This cannot happen on surfaces (hence in particular on K3 surfaces), because in that case, the augmented base loci stay constant in the interior of the Zariski chambers (see \cite[Section 1]{Bau} for the definition of Zariski chamber, and \cite[Theorem 2.2]{Bau} for the cited result), which are convex subcones of the big cone.

\begin{Def}
We say that a big divisor $D$ on $X$ is stable in codimension 1 if $\mathbf{B}_+(D)_{\mathrm{div}}=\mathbf{B}_{-}(D)_{\mathrm{div}}$.
\end{Def}

The result below, which is a corollary to Proposition \ref{lem:B+positivepart}, proves that the divisorial augmented base locus of big divisors on a projective HK manifold $X$ stay constant in the interior of any Boucksom-Zariski chamber (cf.\ \cite[Definition 4.11]{Den1}) of $\mathrm{Big}(X)$. 

\begin{Cor}\label{cor:stableincodim1}
Let $D$ be a big divisor on $X$. Then, $D$ is stable in codimension 1 if and only if $\mathrm{Null}_{q_X}(P(D))=\mathrm{Neg}_{q_X}(D)$, if and only if (the class of) $D$ does not lie on the boundary of any Boucksom-Zariski chamber, where $\mathrm{Neg}_{q_X}(D)$ is the set of the irreducible components of $N(D)$.
\end{Cor}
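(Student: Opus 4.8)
The plan is to prove the two equivalences in turn, pivoting everything through Proposition \ref{lem:B+positivepart}, which identifies $\mathbf{B}_+(D)_{\mathrm{div}}$ with $\mathrm{Null}_{q_X}(P(D))$, together with Corollary \ref{CorToThm:stablebaselocus}, which identifies $\mathbf{B}_-(D)_{\mathrm{div}}$ with $\mathrm{Supp}(N(D))$, i.e.\ with $\mathrm{Neg}_{q_X}(D)$. So by definition $D$ is stable in codimension $1$ exactly when $\mathrm{Null}_{q_X}(P(D))=\mathrm{Neg}_{q_X}(D)$; this is the first equivalence and it is essentially immediate once one unwinds the definitions. One always has $\mathrm{Neg}_{q_X}(D)\subseteq \mathrm{Null}_{q_X}(P(D))$ by item (3) of Theorem \ref{DivZarDec} (each component of $N(D)$ is $q_X$-orthogonal to $P(D)$), so the content of the condition is the reverse inclusion: every prime divisor $q_X$-orthogonal to $P(D)$ already appears in $N(D)$.

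For the second equivalence I would recall the description of the Boucksom--Zariski chambers from \cite[Definition 4.11]{Den1}: two big classes lie in the same (open) chamber precisely when the supports of their negative parts coincide, so that the chamber of $D$ is cut out inside $\mathrm{Big}(X)$ by the linear conditions $q_X(\,\cdot\,,E_i)=0$ for the components $E_i$ of $N(D)$ together with the open conditions $q_X(\,\cdot\,,E)>0$ for the remaining prime divisors $E$; the walls of the chamber decomposition are exactly the loci where some new prime divisor becomes $q_X$-orthogonal to the positive part without yet being contracted into it. Thus $D$ lies on a wall of the chamber structure if and only if there is a prime divisor $E$ with $q_X(P(D),E)=0$ but $E\notin \mathrm{Supp}(N(D))$, i.e.\ if and only if $\mathrm{Null}_{q_X}(P(D))\supsetneq \mathrm{Neg}_{q_X}(D)$. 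Equivalently, $D$ lies in the interior of a chamber iff $\mathrm{Null}_{q_X}(P(D))=\mathrm{Neg}_{q_X}(D)$, which is the desired equivalence. Here one uses the continuity/openness facts already quoted after Definition \ref{defnstabchamb} (from \cite[Proposition 1.26, Proposition 1.24]{Ein2}) and the fact that the Boucksom--Zariski decomposition is locally constant on chamber interiors.

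The one point requiring a little care — the main obstacle — is making the phrase "does not lie on the boundary of any Boucksom--Zariski chamber" precise and checking it is genuinely equivalent to the algebraic condition $\mathrm{Null}_{q_X}(P(D))=\mathrm{Neg}_{q_X}(D)$ rather than merely implied by it. The subtlety is that a class could a priori fail to be in the interior of its own chamber's naive closure while still having $\mathrm{Supp}(N(D))$ account for all of $\mathrm{Null}_{q_X}(P(D))$; one must verify, using the explicit BBF-form description of the decomposition on HK manifolds (Theorem \ref{DivZarDec} and Remark \ref{Rem:zardecomp}) and the characterization \eqref{Characterization:Movablecone} of $\overline{\mathrm{Mov}(X)}$, that the chamber walls are detected exactly by the vanishing $q_X(P(D),E)=0$ for prime divisors $E$ not in $N(D)$, with no other source of boundary behaviour. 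Once this dictionary between "wall of the chamber decomposition" and "extra $q_X$-orthogonal prime divisor" is nailed down, both equivalences follow formally, and the proof is short.
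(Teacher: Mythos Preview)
Your proposal is correct and takes essentially the same approach as the paper: the first equivalence is exactly the combination of Proposition~\ref{lem:B+positivepart} (giving $\mathbf{B}_+(D)_{\mathrm{div}}=\mathrm{Null}_{q_X}(P(D))$) with the identification $\mathbf{B}_-(D)_{\mathrm{div}}=\mathrm{Neg}_{q_X}(D)$ from Corollary~\ref{CorToThm:stablebaselocus}, and for the second equivalence the paper simply invokes \cite[Proposition~4.18]{Den1}, whose content is precisely the wall/chamber dictionary you sketch. The only difference is packaging: you unpack the Boucksom--Zariski chamber boundary characterization by hand, whereas the paper absorbs that work into the external citation.
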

\begin{proof}
This directly follows from Proposition \ref{lem:B+positivepart} and \cite[Proposition 4.18]{Den1}.
\end{proof}

\begin{Cor}{}{}
Let $D$ be any big divisor on $X$. Then, $D$ is unstable if and only if $\mathrm{Neg}_{q_X}(D) \subsetneq \mathrm{Null}_{q_X}(P(D))$, or $P(D)$ is unstable in codimension greater than or equal to 2 (i.e.\ $\mathbf{B}_+(P(D))$ has an irreducible component $V$ of codimension greater than or equal to $2$, with $V \not \subset \mathbf{B}_-(P(D))$).
\end{Cor}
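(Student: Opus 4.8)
The plan is to reduce the statement to set-theoretic manipulations with the two decompositions already at our disposal. Recall that for a big divisor $D$ on $X$ one has $\mathbf{B}(D)=\mathbf{B}_-(D)$, so $D$ is unstable precisely when $\mathbf{B}_-(D)\subsetneq\mathbf{B}_+(D)$. By Proposition \ref{lem:B+positivepart} we have $\mathbf{B}_+(D)=\mathbf{B}_+(P(D))$ and its divisorial part is $\mathrm{Null}_{q_X}(P(D))$, while by Proposition \ref{positivepart} together with Corollary \ref{CorToThm:stablebaselocus} we have $\mathbf{B}_-(D)=\mathbf{B}_-(P(D))\cup\mathrm{Supp}(N(D))$ with divisorial part $\mathrm{Neg}_{q_X}(D)$. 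I would also record at the outset the inclusion $\mathrm{Neg}_{q_X}(D)\subseteq\mathrm{Null}_{q_X}(P(D))$, which is immediate from the orthogonality in Theorem \ref{DivZarDec}, and the elementary fact that a prime divisor contained in a proper closed subset $Z\subsetneq X$ is necessarily one of the divisorial irreducible components of $Z$ (a dimension count on the component of $Z$ containing it).

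For the "only if" direction, suppose $D$ is unstable. Then some irreducible component $V$ of $\mathbf{B}_+(D)=\mathbf{B}_+(P(D))$ satisfies $V\not\subseteq\mathbf{B}_-(D)$. If $V$ is divisorial, it belongs to $\mathrm{Null}_{q_X}(P(D))$ but, failing to lie in $\mathbf{B}_-(D)$, it is not a divisorial component of $\mathbf{B}_-(D)$, so $V\notin\mathrm{Neg}_{q_X}(D)$; combined with $\mathrm{Neg}_{q_X}(D)\subseteq\mathrm{Null}_{q_X}(P(D))$ this gives $\mathrm{Neg}_{q_X}(D)\subsetneq\mathrm{Null}_{q_X}(P(D))$. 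If instead $\codim(V)\geq 2$, then $\mathbf{B}_-(P(D))\subseteq\mathbf{B}_-(D)$ forces $V\not\subseteq\mathbf{B}_-(P(D))$, which is exactly the assertion that $P(D)$ is unstable in codimension $\geq 2$.

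For the converse, I would show that each of the two conditions produces a component of $\mathbf{B}_+(D)$ not contained in $\mathbf{B}_-(D)$. If $\mathrm{Neg}_{q_X}(D)\subsetneq\mathrm{Null}_{q_X}(P(D))$, choose a prime exceptional divisor $E\in\mathrm{Null}_{q_X}(P(D))\setminus\mathrm{Neg}_{q_X}(D)$; it is a divisorial component of $\mathbf{B}_+(D)$, and since it is not among the divisorial components of $\mathbf{B}_-(D)$ it cannot be contained in $\mathbf{B}_-(D)$. If instead $\mathbf{B}_+(P(D))$ has a component $V$ with $\codim(V)\geq 2$ and $V\not\subseteq\mathbf{B}_-(P(D))$, I must verify $V\not\subseteq\mathbf{B}_-(D)=\mathbf{B}_-(P(D))\cup\mathrm{Supp}(N(D))$; as $V$ is irreducible and avoids $\mathbf{B}_-(P(D))$, the only possibility to exclude is $V\subseteq\mathrm{Supp}(N(D))$, i.e.\ $V\subseteq E$ for some prime exceptional divisor $E$ supporting $N(D)$. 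Such an $E$ lies in $\mathrm{Neg}_{q_X}(D)\subseteq\mathrm{Null}_{q_X}(P(D))$ and is therefore itself a divisorial irreducible component of $\mathbf{B}_+(P(D))$; but then $V\subseteq E$ with both $V$ and $E$ irreducible components of $\mathbf{B}_+(P(D))$ forces $V=E$, contradicting $\codim(V)\geq 2$. Hence $V\not\subseteq\mathbf{B}_-(D)$ and $D$ is unstable.

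The whole argument is bookkeeping once Propositions \ref{positivepart} and \ref{lem:B+positivepart} and Corollary \ref{CorToThm:stablebaselocus} are available; the single delicate point is the last one, namely excluding that a codimension $\geq 2$ component of $\mathbf{B}_+(P(D))$ escaping $\mathbf{B}_-(P(D))$ might nevertheless sit inside $\mathrm{Supp}(N(D))$, which is resolved by the component-versus-component coincidence above. I do not anticipate any substantive obstacle.
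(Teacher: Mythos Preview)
Your proof is correct and follows essentially the same route as the paper, which simply says the result ``directly follows from Corollary~\ref{cor:stableincodim1} and from the fact that $\mathbf{B}_+(D)=\mathbf{B}_+(P(D))$.'' You have unpacked that two-line reference into the underlying Propositions~\ref{positivepart} and~\ref{lem:B+positivepart} and Corollary~\ref{CorToThm:stablebaselocus}, and in doing so you have made explicit the one point the paper glosses over: in the ``if'' direction, when $P(D)$ is unstable in codimension $\geq 2$, one must check that the offending component $V$ is not swallowed by $\mathrm{Supp}(N(D))$, which you handle cleanly via the maximality of irreducible components of $\mathbf{B}_+(P(D))$.
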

\begin{proof}
This directly follows from Corollary \ref{cor:stableincodim1} and from the fact that $\mathbf{B}_+(D)=\mathbf{B}_+(P(D))$. 
\end{proof}

\begin{Lem}\label{lem:convexity}
Suppose that any big class $\gamma \in \overline{\mathrm{Mov}(X)}$ on a projective HK manifold $X$ satisfies $\gamma \cdot C\leq 0$, for any curve of $\mathrm{Cont}(X)$ contained in $\mathbf{B}_+(\gamma)$. Then, $\mathrm{SC}(D)$ is a convex subcone of $\mathrm{Big}(X)$ for any big divisor $D$.
\begin{proof}
Let $\alpha,\beta$ be two classes lying in $\mathrm{SC}(D)$, so that $\mathbf{B}_+(\alpha)=\mathbf{B}_+(\beta)$. We want to prove that $\alpha+\beta \in \mathrm{SC}(D)$. One easily checks that  $\mathbf{B}_+(\alpha'+\beta')\subset \mathbf{B}_+(\alpha') \cup \mathbf{B}_+(\beta')$ (cf.\ \cite[Example 1.9]{Ein2}), for any $\alpha',\beta' \in  N^1(X)_{\mathbf{R}}$, and so $\mathbf{B}_+(\alpha+\beta)\subset \mathbf{B}_+(\alpha)$. Suppose by contradiction that $p \in \mathbf{B}_+(\alpha) $ but $p \notin \mathbf{B}_+(\alpha+\beta)$. Let $C \in \mathrm{Cont}(X)$ be a rational curve contained in $\mathbf{B}_+(\alpha)=\mathbf{B}_+(\beta)$ passing through $p$. Then $C \not\subset \mathbf{B}_+(\alpha+\beta)$, and so $(P(\alpha+\beta))\cdot C=(P(\alpha)+P(\beta))\cdot C >0$, whereas $P(\alpha+\beta)\cdot C \leq 0$, by assumption, and this is a contradiction. We conclude that $\mathbf{B}_+(\alpha+\beta)= \mathbf{B}_+(\alpha)=\mathbf{B}_+(\beta)$.

\end{proof}
\end{Lem}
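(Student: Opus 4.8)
The plan is to first reduce the statement to a closedness-under-sum property. Since $\mathbf{B}_+(tD')=\mathbf{B}_+(D')$ for every $t\in\mathbf{R}_{>0}$, the set $\mathrm{SC}(D)$ is automatically a subcone of $\mathrm{Big}(X)$, so convexity is equivalent to the implication: if $\alpha,\beta\in\mathrm{SC}(D)$ then $\alpha+\beta\in\mathrm{SC}(D)$, i.e.\ $\mathbf{B}_+(\alpha+\beta)=\mathbf{B}_+(\alpha)=\mathbf{B}_+(\beta)=\mathbf{B}_+(D)$. One inclusion is immediate: the subadditivity $\mathbf{B}_+(\alpha+\beta)\subseteq\mathbf{B}_+(\alpha)\cup\mathbf{B}_+(\beta)$ (Example 1.9 of \cite{Ein2}) gives $\mathbf{B}_+(\alpha+\beta)\subseteq\mathbf{B}_+(D)$ for free, so the whole content lies in the reverse inclusion $\mathbf{B}_+(D)\subseteq\mathbf{B}_+(\alpha+\beta)$.

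For the reverse inclusion I would argue by contradiction, after reducing to movable classes. Since $\mathbf{B}_+(\gamma)=\mathbf{B}_+(P(\gamma))$ by Proposition \ref{lem:B+positivepart}, one has $\mathrm{SC}(D)=\mathrm{SC}(P(D))$ as subsets of $\mathrm{Big}(X)$, so we may assume $D$ is movable; and (modulo the point discussed below) we may replace $\alpha,\beta$ by $P(\alpha),P(\beta)$ and thereby assume $\alpha,\beta$ are big and movable. Then $\alpha+\beta$ is again movable, since $\overline{\mathrm{Mov}(X)}$ is a convex cone, and $P(\alpha+\beta)=\alpha+\beta$. Now suppose $p\in\mathbf{B}_+(\alpha)\setminus\mathbf{B}_+(\alpha+\beta)$. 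By Proposition \ref{Prop:B+MBM} there is a rational curve $C\in\mathrm{MBM}(\alpha)\subseteq\mathrm{Cont}(X)$ with $p\in\mathrm{Supp}(C)\subseteq\mathbf{B}_+(\alpha)$; since $p\notin\mathbf{B}_+(\alpha+\beta)$ we get $C\not\subseteq\mathbf{B}_+(\alpha+\beta)$, hence $(\alpha+\beta)\cdot C>0$ by Remark \ref{Rem:leqBplus}. On the other hand $C$ is a curve of $\mathrm{Cont}(X)$ lying in $\mathbf{B}_+(\alpha)=\mathbf{B}_+(\beta)$, so the standing hypothesis, applied to the big movable classes $\alpha$ and $\beta$, yields $\alpha\cdot C\le 0$ and $\beta\cdot C\le 0$, whence $(\alpha+\beta)\cdot C\le 0$ — a contradiction. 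Thus $\mathbf{B}_+(\alpha+\beta)=\mathbf{B}_+(D)$.

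The main obstacle is the reduction to movable classes, because the divisorial Zariski decomposition is not additive: in general $P(\alpha+\beta)\neq P(\alpha)+P(\beta)$, one only has $P(\alpha+\beta)\ge P(\alpha)+P(\beta)$ with the difference $M:=P(\alpha+\beta)-P(\alpha)-P(\beta)$ effective (by the maximality characterization of the positive part, Remark \ref{Rem:zardecomp}, after writing $\alpha,\beta$ as effective divisors) and supported on prime exceptional divisors contained in $\mathrm{Supp}(N(\alpha))\cup\mathrm{Supp}(N(\beta))$. To close this gap I would either (i) run the contradiction argument directly with $\gamma=P(\alpha+\beta)$ in place of $\alpha+\beta$, using $\mathbf{B}_+(\alpha+\beta)=\mathbf{B}_+(P(\alpha+\beta))$ together with the hypothesis for $\gamma=P(\alpha+\beta)$ and a comparison of the signs of $P(\alpha+\beta)\cdot C$ and $(P(\alpha)+P(\beta))\cdot C$ along the offending curve; or (ii) prove the sharper fact that $M=0$, i.e.\ that an effective subdivisor of a $q_X$-nef (movable) class which is supported on prime exceptional divisors must vanish, so that $P(\alpha+\beta)=P(\alpha)+P(\beta)$ and one simply feeds the curve $C$ into the hypothesis for $P(\alpha)$ and for $P(\beta)$ exactly as above. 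I expect route (ii), establishing the vanishing of $M$, to be the cleanest and to be the place where the real work is; once it is in hand the rest of the proof is the short contradiction argument of the previous paragraph.
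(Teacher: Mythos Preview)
Your approach is the paper's: subadditivity gives $\mathbf{B}_+(\alpha+\beta)\subset\mathbf{B}_+(D)$, and the reverse inclusion comes from a curve $C\in\mathrm{Cont}(X)$ through a hypothetical $p\in\mathbf{B}_+(\alpha)\setminus\mathbf{B}_+(\alpha+\beta)$, producing $P(\alpha+\beta)\cdot C>0$ against $(P(\alpha)+P(\beta))\cdot C\le 0$ from the hypothesis applied to $P(\alpha)$ and $P(\beta)$. The paper simply writes the equality $P(\alpha+\beta)\cdot C=(P(\alpha)+P(\beta))\cdot C$ without comment, so it is tacitly using exactly the additivity $P(\alpha+\beta)=P(\alpha)+P(\beta)$ that you isolate as the obstacle; you are being more careful than the paper here, not missing something it supplies.

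Your route (ii) is the right one, but the way you state it is false as a standalone claim: ``an effective divisor supported on prime exceptionals whose sum with a movable class remains movable must vanish'' fails in general (add a small multiple of a prime exceptional $E$ to any movable $P$ with $q_X(P,E)>0$). What forces $M=0$ is the constraint $\mathbf{B}_+(\alpha)=\mathbf{B}_+(\beta)$. By Proposition~\ref{lem:B+positivepart} this yields $\mathrm{Null}_{q_X}(P(\alpha))=\mathrm{Null}_{q_X}(P(\beta))=:\mathcal{E}$, and $\mathrm{Supp}(N(\alpha))\cup\mathrm{Supp}(N(\beta))\subset\mathcal{E}$. Hence every prime component $E$ of $N(\alpha)+N(\beta)$ satisfies $q_X(P(\alpha)+P(\beta),E)=0$; moreover these components lie in the negative-definite hyperplane $P(\alpha)^{\perp}$, and since distinct primes pair non-negatively under $q_X$, a standard argument shows their Gram matrix is negative definite. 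Thus $(P(\alpha)+P(\beta),\,N(\alpha)+N(\beta))$ satisfies all three conditions of Theorem~\ref{DivZarDec}, and by uniqueness $P(\alpha+\beta)=P(\alpha)+P(\beta)$. With this in hand your contradiction argument (equivalently the paper's) goes through verbatim, and the preliminary reduction to movable classes becomes unnecessary.
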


The above lemma tells us that, on HK manifolds, to have a non-convex stability chamber, there must be some curve $C$ in $\mathrm{Cont}(X)$, and some movable divisor class $\gamma$, such that $C \subset \mathbf{B}_+(\gamma)$, and $\gamma \cdot C>0$. This happens for instance for the Fano variety of lines of certain smooth cubic fourfolds (see "The example of Hassett and Tschinkel").

\section{\bf \scshape Duality for Cones of $k$-Ample Divisors}\label{duality}
In this section, we prove Theorem \ref{Thm:duality}.

\begin{Def}
Let $\overline{\mathrm{Amp}_k(X)}$ be the closure of the convex cone $\mathrm{Amp}_k(X)$ spanned by divisor classes $\gamma$ with $\mathrm{dim}(\mathbf{B}_{+}(\gamma))\leq k-1$. We say that a class lying in $\mathrm{Amp}_k(X)$ is $k$-ample.
\end{Def}

\begin{Rem}
Note that the terminology introduced in the above definition is not standard. Indeed, another notion of "$k$-ampleness" is introduced by Totaro in \cite{Totaro13} and has been intensively studied.
\end{Rem}

\begin{Ex}
We have $\overline{\mathrm{Amp}_{2n-1}(X)}=\overline{\mathrm{Mov}(X)}$. Indeed, we have the equality $\mathrm{Amp}_{2n-1}(X)=\mathrm{Int}(\mathrm{Mov}(X))$ of convex cones. 
\end{Ex}

Let $\phi:X\dashrightarrow X'$ be a birational map to a projective HK manifold $X'$. Since  $N^1(X)_{\mathbf{R}}$ and $N^1(X')_{\mathbf{R}}$ are isomorphic under $\phi_*$, their dual spaces $N_1(X)_{\mathbf{R}}$ and $N_1(X')_{\mathbf{R}}$ are also isomorphic. Under this isomorphism, any class $\alpha \in N_1(X')_{\mathbf{R}}$ can be pulled-back to 
a class in $N_1(X)_{\mathbf{R}}$. We define
\begin{equation}
   \overline{\mathrm{Mob}_k(X,X')}\subset N_1(X)_{\mathbf{R}} 
\end{equation}
to be the image of the convex cone generated by numerical classes of irreducible curves $C$ in $X'$ moving in a family that sweeps out the birational image of a subvariety of $X$ of dimension at least $k$, via the isomorphism $N_1(X)_{\mathbf{R}}\cong N_1(X')_{\mathbf{R}}$. Now, consider the cone  
\[\
\overline{\sum_{X\dashrightarrow X'}\overline{\mathrm{Mob}_k(X,X')}},
\]
where the sum is taken over all birational maps $X\dashrightarrow X'$, such that $X'$ is a projective HK manifold.

\begin{Def}
 We define the cone of birationally $k$-mobile curves of $X$  as \[
 \mathrm{bMob}_k(X):=
\overline{\sum_{X\dashrightarrow X'}\overline{\mathrm{Mob}_k(X,X')}}.
\]
\end{Def}

We now prove Theorem \ref{Thm:duality}.

\begin{proof3}
The inclusion $\subset$ is clear. Hence, we are left to show $\supset$. We first observe that 

\[
\mathrm{bMob}_k(X)^{\vee} \subset \overline{\mathrm{Eff}(X)}.
\]

Indeed, $\overline{\mathrm{Mob}(X)}\subset \mathrm{bMob}_k(X)$, thus, passing to the duals, we obtain
$\mathrm{bMob}_k(X)^{\vee}\subset \overline{\mathrm{Eff}(X)}$ (cf.\ \cite[Theorem 2.2]{BDPP}). Now, we show that $\mathrm{bMob}_{2n-1}(X)^{\vee}\subset \overline{\mathrm{Mov}(X)}$. Indeed, if $E$ is a prime exceptional divisor that can be contracted via $\pi \colon X \to Y$ (with $Y$ normal and projective), the class of a general fiber of $\pi$ is given by $-2\frac{q_X(E,-)}{q_X(E)}$ (cf.\ \cite[Corollary 3.6, part 1]{Markman}). If $E$ is not contractible, it can be contracted on a birational model $f \colon X \DashedArrow X'$ (cf.\ \cite[Proposition 1.4]{Druel}, or \cite[Proof of Theorem A]{BBP2013}), and, on $X'$, either the homology class $-2\frac{q_X(E',-)}{q_X(E')}$, or the homology class $-\frac{q_X(E',-)}{q_X(E')}$, is represented by a rational curve (cf.\ \cite[Corollary 3.6, part 1]{Markman}) moving in a family sweeping out the strict transform $E'\subset X'$ of $E$ via $f$. Now, $\overline{\mathrm{Mov}(X)}$ consists of the pseudo-effective classes $q_X$-intersecting non-negatively any prime exceptional divisor. Indeed, any class in $\overline{\mathrm{Mov}(X)}$ is $q_X$-nef, i.e.\ $q_X$-intersect non-negatively any prime divisor. On the other hand, suppose that $\alpha$ is a pseudo-effective class $q_X$-intersecting non-negatively any prime exceptional divisor. If $\alpha \notin \overline{\mathrm{Mov}(X)}$, we have $\alpha=P(\alpha)+N(\alpha)$, with $N(\alpha)\neq 0$. Then, since $P(\alpha)\in \overline{\mathrm{Mov}(X)}$ and $q_X$ is an intersection product (i.e.\ $q_X(E,E')\geq 0$ for any two distinct prime divisors $E,E'$ on $X$), there must be some prime exceptional divisor supporting $N(\alpha)$, and $q_X$-intersecting negatively $\alpha$. But this contradicts the assumption on $\alpha$. Then, $\alpha=P(\alpha) \in \overline{\mathrm{Mov}(X)}$, and we obtain the desired inclusion. Then, the interior of $\mathrm{bMob}_k(X)^{\vee}$ is contained in $\mathrm{Big}(X)$, and to prove the theorem it suffices to show that 
\[
\mathrm{int}\left(\mathrm{bMob}_k(X)^{\vee}\right) \subset \mathrm{Amp}_k(X).
\]
Suppose that $D$ is a big divisor with class lying in  $\mathrm{int}\left(\mathrm{bMob}_k(X)^{\vee}\right)$, but not in $\mathrm{Amp}_k(X)$.  Then, there exists an irreducible component $V$ of $\mathbf{B}_+(D)$ with $\mathrm{dim}(V)\geq k$. As $D\in \mathrm{Int}(\mathrm{Mov}(X))$, by \cite[Proof of Theorem A]{BBP2013} and the proof of Proposition \ref{Prop:B+MBM}, there exists a class $\alpha$ and a birational map $\phi:X\dashrightarrow X'$, with $X'$ a projective HK manifold, inducing a birational map $\phi|_V:V\dashrightarrow V'$ (where $V'$ is the strict transform of $V$ via $\phi$), and such that $\phi_*(\alpha)$ is the class of a curve which moves in a family sweeping out $V'$, and with $\phi_*(D)\cdot \phi_*(\alpha) \leq 0$. But 
 then $D\cdot \alpha \leq 0$, and 
 $\alpha \in \mathrm{bMob}_k(X)$. This is a contradiction, because $D$ intersects positively any class lying in $\mathrm{bMob}_k(X)$, as $D\in \mathrm{int}\left(\mathrm{bMob}_k(X)^{\vee}\right)$. The second part of the statement directly follows from Corollary \ref{Cor:B+MBM}.
\end{proof3}

\section{\bf \scshape A Decomposition of $\mathrm{Eff}(X)$ into Chambers of Mori-Type }\label{Section6}
It is well known that the birational geometry of a Mori dream space is encoded by its effective cone, and, more in particular, by the decomposition of its effective cone into Mori chambers.
On hyper-Kähler manifolds, the minimal model program works nicely (cf.\ \cite[Theorem 4.1]{MatsushitaZhang2013}, \cite[Theorem 1.2]{LP2016}), and, as already remarked, projective HK manifolds are close to being Mori dream spaces. For this reason, also on projective HK manifolds, one may suspect the existence of a decomposition of the effective cone into chambers of "Mori-type", resuming the birational geometry of the considered variety. In this section, we show that we indeed have such a decomposition for $\mathrm{Eff}(X)$, which we still call decomposition into \textit{Mori chambers}. 
\vspace{0.2cm}

Fix $Y$ to be a normal complex $\mathbf{Q}$-factorial projective variety.

\begin{Def}
 Let $f\colon Y \DashedArrow Z$ be a dominant rational map, with $Z$ normal and projective. We say that $f$ is a rational contraction if there exists a resolution of $f$
 \[
 \begin{tikzcd}
  Y'  \arrow[d, "\mu"] \arrow[dr, "f'"]
& \\
Y \arrow[r,dashrightarrow, "f"]
& Z, 
 \end{tikzcd}
 \]
 where $Y'$ is smooth and projective, $\mu$ is birational, and for every $\mu$-exceptional effective divisor $E'$ on $Y'$, we have $f'_*(\mathscr{O}_{Y'}(E'))=\mathscr{O}_Z$. 
\end{Def}

\begin{Rem}
Let $Y'$ be a projective $\mathbf{Q}$-factorial variety. If $ s \colon Y' \DashedArrow Y$ is a small $\mathbf{Q}$-factorial modification and $c \colon Y \DashedArrow Z$ is a rational contraction, with $Z$ normal and projective, also $c \circ s$ is a rational contraction.
\end{Rem}

If in the above definition, $f$ is a morphism, $f$ is a contraction in the usual sense.
\vspace{0.2cm}

Now, let $D$ be an effective divisor on $X$. Then, either $D \in \overline{\mathrm{Mov}(X)}$, so that $D\in f^{-1}(\mathrm{Nef}(X'))$, for some projective hyper-Kähler manifold $X'$, and some birational map $f \colon X \DashedArrow X'$, or $D$ has some divisorial stable base locus. In the second case, let $D=P+N$ be the divisorial Zariski decomposition of $D$, and $f \colon X \DashedArrow X'$ be any birational map making $P$ nef on $X'$ (projective HK). We consider the face of $\overline{\mathrm{Mov}(X)}$
\[
F=\mathrm{Neg}_{q_X}(D)^{\perp}\cap \overline{\mathrm{Mov}(X)},
\]
where $\mathrm{Neg}_{q_X}(D)$ is the set of irreducible components of $N$, and
\[
\mathrm{Neg}_{q_X}(D)^{\perp}:=\left\{\alpha\in N^1(X)_{\mathbf{R}}\;|\; q_X(\alpha,E)=0 \text{ for any prime divisor $E$ supporting }N\right\}.
\] 

Let $c\colon X' \to Y'$ be the birational morphism contracting the face $F'^{\vee}\subset \overline{\mathrm{NE}(X')}$ (where $P'=f_*(P)$, and $F'=f_*(F)$) to the normal $\mathbf{Q}$-factorial symplectic variety $Y'$ (we can do this because the Gram-matrix of the irreducible components of $N'=f_*(N)$ is negative definite, if $N'\neq 0$). In particular, we have $c^{-1}(\mathrm{Nef}(Y'))=F'$, and so $(c \circ f)^{-1}(\mathrm{Nef}(Y'))=F$. Then, $D\in F \cap \mathrm{Eff}(X)+V^{\geq 0}(\mathrm{Neg}_{q_X}(D))$, where $V^{\geq 0}(\mathrm{Neg}_{q_X}(D))$ is the conic convex hull of $\mathrm{Neg}_{q_X}(D)$. Vice versa, let $f\colon X \DashedArrow Y'$ be a birational contraction to a normal, $\mathbf{Q}$-factorial projective variety. Then, $f$ factors through a birational map $g\colon X \DashedArrow X'$, with $X'$ a projective HK manifold, and a surjective birational morphism $c\colon X' \to Y'$ with connected fibers. As $Y'$ is $\mathbf{Q}$-factorial, the irreducible components of the exceptional locus $\mathrm{Exc}(c)$ are divisorial. Let $S=\left\{E'_1,\dots,E'_k\right\}$ be the set of irreducible components of $\mathrm{Exc}(c)$. The Gram matrix of $S$ must be negative definite (if $S$ is not empty), and $c^{-1}(\mathrm{Nef}(Y'))=S^{\perp}\cap \mathrm{Nef}(X')$. Set
$F=(c\circ f)^*(\mathrm{Nef}(Y'))$. Then, any divisor of the form $D=A+B$, where $A\in F\cap \mathrm{Eff}(X) $, and $B=\sum_ia_if^*(E_i')$, $a_i\geq 0$ for every $i$, is effective. Furthermore, $A$ (resp.\ $B$) is the positive (resp.\ negative) part of the divisorial Zariski decomposition of $D$. We conclude that the class of any effective divisor $D$ on $X$ lies in the subcone of $\mathrm{Eff}(X)$ 
\begin{equation}\label{Morichambers}
F \cap \mathrm{Eff}(X)+V^{\geq 0}(\mathrm{Neg}_{q_X}(D)),
\end{equation}
where $F=\mathrm{Neg}_{q_X}(D)^{\perp}\cap \overline{\mathrm{Mov}(X)}$, and induces a birational contraction to a normal $\mathbf{Q}$-factorial projective variety. Vice versa, any birational contraction to a normal $\mathbf{Q}$-factorial projective variety induces a subcone of $\mathrm{Eff}(X)$ of the form (\ref{Morichambers}). We call the subcones of $\mathrm{Eff}(X)$ of the form (\ref{Morichambers}) \textit{Mori chambers} of $\mathrm{Eff}(X)$. Note that two different Mori chambers can intersect only on the boundary, i.e.\ the interiors of two different Mori chambers are disjoint. We resume all the above with the following proposition. 

\begin{Prop}
 Let $X$ be a projective HK manifold. Then
 \[
 \mathrm{Eff}(X)=\cup_i\mathcal{C}_i,
 \]
 where \[
 \mathcal{C}_i=g_i^{*}(\mathrm{Nef}(Y_i))\cap \mathrm{Eff}(X)+\sum_j\mathbf{R}^{\geq 0}E_i^j,
 \]
 $g_i\colon X \DashedArrow Y_i$ is a birational contraction, with $Y_i$ projective, normal and $\mathbf{Q}$-factorial, and the $E_i^j$'s are the prime exceptional divisors contracted by $g_i$, for any $i$.
\end{Prop}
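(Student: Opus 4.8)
The statement is essentially a repackaging of the analysis carried out in the paragraph preceding it, so the plan is to organize that discussion into a clean two-inclusion argument. First I would establish the \emph{forward} direction: every effective divisor $D$ lies in some $\mathcal{C}_i$. Writing the divisorial Zariski decomposition $D = P(D) + N(D)$ (Theorem \ref{DivZarDec}), the key observation is that $P(D) \in \overline{\mathrm{Mov}(X)}$, so by \cite[Theorem 1.2]{MatsushitaZhang2013} there is a birational map $g \colon X \DashedArrow X'$ of projective HK manifolds making $P' := g_*(P(D))$ nef on $X'$. Then I would contract the extremal face $F'^{\vee} = \mathrm{Neg}_{q_X}(N')^{\perp} \cap \overline{\mathrm{NE}(X')}$ via a birational morphism $c \colon X' \to Y'$ onto a normal $\mathbf{Q}$-factorial symplectic variety (this is possible because the Gram matrix of the components of $N' = g_*(N(D))$ is negative definite, by item (2) of Theorem \ref{DivZarDec}). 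Setting $g_i = c \circ g$, $Y_i = Y'$, and $F = (c\circ g)^{*}(\mathrm{Nef}(Y_i))$, one checks $F = \mathrm{Neg}_{q_X}(D)^{\perp}\cap\overline{\mathrm{Mov}(X)}$ using $c^{-1}(\mathrm{Nef}(Y')) = F'$, and concludes $D = P(D) + N(D) \in F\cap\mathrm{Eff}(X) + \sum_j \mathbf{R}^{\geq 0} E_i^j$, where the $E_i^j$ are the components of $N(D)$, which are exactly the prime exceptional divisors contracted by $g_i$.

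For the \emph{reverse} direction I would start from an arbitrary birational contraction $f \colon X \DashedArrow Y_i$ onto a normal $\mathbf{Q}$-factorial projective variety, factor it as $f = c \circ g$ with $g \colon X \DashedArrow X'$ a birational map to a projective HK manifold and $c \colon X' \to Y'$ a surjective birational morphism with connected fibers (this factorization uses that HK birational maps are compositions of flops and the MMP on HK manifolds, \cite[Theorem 4.1]{MatsushitaZhang2013}). Since $Y_i$ is $\mathbf{Q}$-factorial, $\mathrm{Exc}(c)$ is purely divisorial with components $E'_1,\dots,E'_k$ whose Gram matrix is negative definite, and $c^{-1}(\mathrm{Nef}(Y_i)) = \{E'_1,\dots,E'_k\}^{\perp}\cap\mathrm{Nef}(X')$. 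Setting $F = (c\circ g)^{*}(\mathrm{Nef}(Y_i))$, I would verify that for $A \in F\cap\mathrm{Eff}(X)$ and $B = \sum_i a_i g^*(E'_i)$ with $a_i \geq 0$, the sum $D = A+B$ is effective and $A = P(D)$, $B = N(D)$ — the point being that $A$ is $q_X$-nef (it pulls back a nef class) hence movable, while $B$ is supported on exceptional divisors orthogonal to $A$, so uniqueness in Theorem \ref{DivZarDec} identifies the decomposition. This shows $\mathcal{C}_i \subset \mathrm{Eff}(X)$, and combined with the forward direction gives $\mathrm{Eff}(X) = \bigcup_i \mathcal{C}_i$. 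Finally I would remark that distinct Mori chambers meet only along their boundaries, since the positive part $P(D)$ of a class in the relative interior of $\mathcal{C}_i$ determines $F$ (as the minimal face of $\overline{\mathrm{Mov}(X)}$ containing it) and $N(D)$ determines the exceptional directions, so the pair $(g_i, Y_i)$ is recovered up to the relevant equivalence.

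The main obstacle is bookkeeping rather than conceptual: one must carefully justify the factorization of an arbitrary birational contraction through a \emph{projective HK} model $X'$ followed by a genuine morphism, and then correctly match up the face $F$ of $\overline{\mathrm{Mov}(X)}$ with the pullback of $\mathrm{Nef}(Y_i)$ on both sides of the argument. The subtle identity to get right is $(c\circ g)^{*}(\mathrm{Nef}(Y_i)) = \mathrm{Neg}_{q_X}(D)^{\perp}\cap\overline{\mathrm{Mov}(X)}$: the inclusion of the pullback into the orthogonal complement is formal, but equality requires that $c$ contracts \emph{precisely} the face cut out by $\mathrm{Neg}_{q_X}(D)^{\perp}$, which in turn uses that the exceptional divisors of $c$ span a negative-definite sublattice and that $\mathrm{Nef}(Y')$ pulls back to the corresponding face of $\mathrm{Nef}(X')$ — a fact about $\mathbf{Q}$-factorial contractions of symplectic varieties. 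Everything else (effectivity of $A+B$, identification of the Zariski decomposition via its uniqueness, disjointness of interiors) is routine given the preliminaries already assembled.
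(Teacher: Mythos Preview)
Your proposal is correct and follows essentially the same two-inclusion strategy as the paper: the proposition is explicitly stated there as a summary of the discussion that precedes it, and your forward direction (Zariski decomposition, pass to an HK model making $P(D)$ nef via \cite{MatsushitaZhang2013}, contract the face dual to $\mathrm{Neg}_{q_X}(D)$) and reverse direction (factor an arbitrary $\mathbf{Q}$-factorial birational contraction through an HK model, identify the exceptional divisors and the face $F$) match the paper's argument step for step. One small notational slip: the face you contract should be described as the dual $F'^{\vee}\subset\overline{\mathrm{NE}(X')}$ of $F'=f_*(F)\subset\mathrm{Nef}(X')$, not as $\mathrm{Neg}_{q_X}(N')^{\perp}\cap\overline{\mathrm{NE}(X')}$, but this does not affect the argument.
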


The decomposition of $\mathrm{Eff}(X)$ into Mori chambers induces a decomposition of $\mathrm{Big}(X)$ into convex subcones, which we still call decomposition of $\mathrm{Big}(X)$ into \textit{Mori chambers}. In the interior of any Mori chamber of $\mathrm{Big}(X)$, the stable base locus of the divisor classes is constant. However, a locus of the big cone where the stable base locus is constant could be bigger than the interior of a Mori chamber  (see Remark \ref{Morifinerthanstabchambers} for an example). It is worth observing that the decomposition of $\mathrm{Big}(X)$ into Boucksom-Zariski chambers (cf.\ \cite{Den1}) is related to that in Mori chambers. Indeed, the interior of any Boucksom-Zariski chamber different from $\overline{\mathrm{Mov}(X)}\cap \mathrm{Big}(X)$ is equal to the interior of a unique Mori chamber (and vice versa). The point is that the Boucksom-Zariski chambers should be thought of as "stability chambers in codimension 1". In particular, in the interior of each Boucksom-Zariski chamber, the divisorial augmented base locus is constant, and a big divisor $D$ lies on the boundary of some Boucksom-Zariski chamber if and only if $D$ is unstable in codimension one, i.e.\ if and only if $\mathbf{B}_-(D)_{\mathrm{div}}\subsetneq \mathbf{B}_+(D)_{\mathrm{div}}$ (see Corollary \ref{cor:stableincodim1}). This is coherent with the fact that the big classes in $\mathrm{int}(\mathrm{Mov}(X))$ have an empty divisorial augmented base locus, though the big classes in $\mathrm{Mov}(X)$ could have some non-divisorial augmented base locus.
\section{\bf \scshape Examples}\label{Examples}

In this section, we provide several examples. 

\subsection{\large \itshape Hilbert schemes of points on K3 surfaces}\label{Example:Hilbertschemes}
Let $S_d$ be a K3 surface such that $\Pic(S_d) \cong \Z\cdot H_{S_d}$, with $H_{S_d}^2 = 2d$. Consider the projective HK manifold $\mathrm{Hilb}^2(S_d)$ with the usual decomposition $\mathrm{Pic}(\mathrm{Hilb}^2(S_d))\cong \mathbf{Z} \cdot H \oplus \mathbf{Z} \cdot \delta$, where $H$ is the line bundle associated to $H_{S_d}$ and $\delta$ is half the divisor corresponding to non-reduced subschemes or the exceptional divisor of the Hilbert-Chow morphism $\mathrm{Hilb}^2(S_d)\to \mathrm{Sym}^2(S_d)$. The effective cone of $\mathrm{Hilb}^2(S_d)$ and its decomposition into birational chambers is completely determined by the solution of some Pell equations due to the work of Bayer-Macrì, see \cite[Lemma 13.1, Proposition 13.3]{BayerMacri2014}. For polarized K3 surfaces $S_d$ of low degrees one can give a precise description of the stability chambers of $\mathrm{Big}(\mathrm{Hilb}^2(S_d))$.

\begin{Ex}\label{Ex:SCDdegree2}
Assume that $d=1$. Then $S_1$ is a double covering of $\mathbf{P}^2$ ramified along a sextic curve. Let $\pi:S_1 \to \mathbf{P}^2$ be such covering. We have:
\begin{enumerate}
\item $\mathrm{Eff}(\mathrm{Hilb}^2(S_1))=\langle H-\delta,\delta\rangle$,
\item $\mathrm{Mov}(\mathrm{Hilb}^2(S_1))=\langle H-\delta, H\rangle$,
\item $\mathrm{Nef}(\mathrm{Hilb}^2(S_1))=\langle 3H-2\delta, H\rangle$.
\end{enumerate}

We see that there are three stability chambers: $\mathrm{SC}(A)$, which is associated with any ample divisor $A$, the chamber $\mathrm{SC}(H)$, and $\mathrm{SC}(3H-2\delta)$. We can describe all of them geometrically:
\begin{enumerate}
\item In $\mathrm{SC}(A)$ the augmented base locus is empty.
\item In $\mathrm{SC}(H)$ the augmented base locus is given by the exceptional divisor $E \equiv 2\delta$. The divisor $H$ is big and nef, and the morphism induced by $|H|$ is nothing but the Hilbert-Chow morphism $\mathrm{Hilb}^2(S_d)\to \mathrm{Sym}^2(S_d)$ (whose exceptional locus is given by $E$).
\item In $\mathrm{SC}(3H-2\delta)$ the augmented base locus is given by the plane $\mathbf{P}^2$, obtained using the fibers of the double covering $\pi$. The other birational model of $\mathrm{Hilb}^2(S_1)$ is obtained by taking the \emph{Mukai flop} along $\mathbf{P}^2$, i.e.\ the blow-up of $\mathbf{P}^2$ in $\mathrm{Hilb}^2(S_1)$, followed by the contraction of the exceptional divisor in the other direction.
\end{enumerate} 

\begin{figure}[htbp]
  \centering

  \begin{tikzpicture}[scale=0.9]

       \path[fill=Ivory1] (0,0) -- (0,5) -- (4,5) -- (4,0) -- cycle;
       \path[fill=lightgray] (0,0) -- (0,5) -- (-3.33,5) -- cycle;
        \path[fill=PeachPuff2] (0,0) -- (-3.33,5) -- (-4,5) -- (-4,4)  --cycle;

\draw [Ivory1](0,0) -- (0,5);
\draw [line width=1pt] (0,0) -- (-4,4);
\draw [line width=1pt]  (0,0) -- (4,0);
\draw [PeachPuff2] (0,0) -- (-3.33,5);

       \node (nefX) at (-1.3,4.5) {$\mathrm{SC}(A)$};
       \node (nefX') at (-4.4,4.5) {$\mathrm{SC}(3H-2\delta)$};
       \node (sigmaH) at (2.3,2.3) {$\mathrm{SC}(H)$};

     \foreach \x in {-4,-3,...,4} 
       \foreach \y in {-1,...,5}{
         \fill(\x,\y) circle (1pt);}

       \fill(0,0) circle (2 pt);
       \node (o) at (-0.3,-0.3) {$0$};
       \fill(1,0) circle (2 pt);
       \node (d) at (1,-0.3) {$\delta$};
       \fill(0,1) circle (2 pt);
       \node (h) at (0.3,1) {$H$};
       \fill(-1,1) circle (2 pt);
       \node (l) at (-1.4,0.5) {$H-\delta$};

  \end{tikzpicture}
  \caption{Stability chambers on $\mathrm{Hilb}^2(S_1)$}
  \label{figure:SCDdegree2}
\end{figure}
\end{Ex}

\begin{Ex}
With the same notation as above, let $d =2$. Then  $S_2\subset\P^3$ is a quartic surface, and we get the following decomposition of the effective cone:
\begin{enumerate}
\item $\mathrm{Eff}(\mathrm{Hilb}^2(S_2))=\langle 2H-3\delta,\delta\rangle$,
\item $\mathrm{Mov}(\mathrm{Hilb}^2(S_2))=\Nef(\mathrm{Hilb}^2(S_1)) =\langle 3H-4\delta, H\rangle$.
\end{enumerate}
Notice that in this case, we have an involution $\iota:\mathrm{Hilb}^2(S_2)\to \mathrm{Hilb}^2(S_2)$ given by the residual intersection with the line spanned by two different points. Let $E=2\delta$ be the exceptional divisor. Then, the involution $\iota$ does not fix $E$, because a tangent line that is not bi-tangent maps a non-reduced double point to two points. Moreover, one can check that the class of $\iota(E)$ in the Néron-Severi space is  $\iota^*(E) = 2H-3\delta$, and this is represented by a prime exceptional divisor different from $E$. All the asymptotic base loci of big divisors are divisorial in this case, and there are three stability chambers: $\mathrm{SC}(A)$ where $A$ is any ample divisor on $\mathrm{Hilb}^2(S_2)$, the chamber $\mathrm{SC}(H)$, and $\mathrm{SC}(3H-4\delta)$, see Figure \ref{fig:SCDdegree4}. The first two chambers are as in Example \ref{Ex:SCDdegree2}. In the third, the augmented base locus is given by $i(E)$, and we have $\mathrm{SC}(2H-3\delta)=\langle 2H-3\delta,3H-4\delta\rangle \cap \mathrm{Big}(\mathrm{Hilb}^2(S_2))$.

\begin{figure}[htbp]
  \centering

  \begin{tikzpicture}[scale=0.9]

       \path[fill=Ivory1] (0,0) -- (0,5) -- (4,5) -- (4,0) -- cycle;
       \path[fill=lightgray] (0,0) -- (-4,2.66) -- (-4,3) -- cycle;
        \path[fill=PeachPuff2] (0,0) -- (-4,3) -- (-4,5) -- (0,5)  --cycle;

\draw [Ivory1](0,0) -- (0,5);
\draw [line width=1pt] (0,0) -- (-4,2.66);
\draw [line width=1pt]  (0,0) -- (4,0);
\draw [PeachPuff2] (0,0) -- (-3.33,5);

\node (pt1) at (-3,0.4){};

\draw [->] (pt1) to[bend left] (-3.6,2.6);

       \node (nefX) at (-2,3.3) {$\mathrm{SC}(A)$};
       \node (nefX') at (-2.5,0.3) {$\mathrm{SC}(3H-4\delta)$};
       \node (sigmaH) at (2.3,2.3) {$\mathrm{SC}(H)$};

     \foreach \x in {-4,-3,...,4} 
       \foreach \y in {-1,...,5}{
         \fill(\x,\y) circle (1pt);}

       \fill(0,0) circle (2 pt);
       \node (o) at (-0.3,-0.3) {$0$};
       \fill(1,0) circle (2 pt);
       \node (d) at (1,-0.3) {$\delta$};
       \fill(0,1) circle (2 pt);
       \node (h) at (0.3,1) {$H$};
           \fill(-3,2) circle (2 pt);
       \node (e) at (-2.3,2.3) {$2H-3\delta$};

  \end{tikzpicture}
  \caption{Stability chambers on $\mathrm{Hilb}^2(S_2)$}
  \label{fig:SCDdegree4}
\end{figure}

\end{Ex}

\begin{Ex}
Let $d=3$, then $S_3\subset \P^4$ is the complete intersection of a quadric $Q\subset\P^4$ and a cubic $T\subset \P^4$. The decomposition of the effective cone is:
\begin{enumerate}
\item $\mathrm{Eff}(\mathrm{Hilb}^2(S_3))=\langle H-2\delta,\delta\rangle$,
\item $\mathrm{Mov}(\mathrm{Hilb}^2(S_3))=\Nef(\mathrm{Hilb}^2(S_3)) =\langle 2H-3\delta, H\rangle$.
\end{enumerate}
This case also corresponds to divisorial stability chambers. As before we have three stability chambers: $\mathrm{SC}(A)$ where $A$ is an ample divisor on $\mathrm{Hilb}^2(S_3)$, the chamber $\mathrm{SC}(H)$ and $\mathrm{SC}(3H-4\delta)$. In the third stability chamber, the augmented base locus is given by an irreducible divisor $D$. One can describe $D$ geometrically (see \cite[Lemma 4.5]{GounelasOttem2020}) as follows: the quadric $Q$ is a hyperplane section of $\mathrm{Gr}(2,4)\subset\P^5$ under the Plücker embedding, then $D\cong \P(\mathcal{U}^\vee|_S)$, where $\mathcal{U}$ is the tautological rank 2 bundle on $\mathrm{Gr}(2,4)$. A computation of its Chern classes yields that $D$ is not isomorphic to $E$.
\end{Ex}

\subsection{\large \itshape The example of Hassett and Tschinkel}\label{exampleHT}

We now want to describe the stability chambers of the Fano variety of lines of certain smooth cubic fourfolds containing a smooth cubic scroll. We will see that in this case not all of the stability chambers are convex subcones of the big cone. In \cite{HassettTschinkel2010} the authors gave a complete description of the movable cone of such variety. The main result of Hassett and Tschinkel we are interested in is the following.

\begin{Thm}\cite[Theorem 7.4]{HassettTschinkel2010}\label{Thm:HassettTschinkel}
Suppose that $Y$ is a smooth cubic fourfold containing a smooth cubic
scroll $T$ with
$$H^4(Y,\mathbf{Z}) \cap H^{2,2}(Y,\mathbf{C})=\mathbf{Z} h^2 + \mathbf{Z} T$$
and let $F=F_0$ denote the variety of lines on $Y$.  
Then we have an infinite sequence of Mukai flops
$$\cdots F^{\vee}_2  \dashrightarrow F^{\vee}_1 \dashrightarrow F_0 \dashrightarrow
F_1 \dashrightarrow F_2 \cdots$$
with isomorphisms between every other flop in this sequence
\begin{equation}
\cdots F^{\vee}_{2} \cong F_0 \cong F_2  \cdots \quad \text{ and } \quad
\cdots F^{\vee}_{1} \cong F_1  \cdots  
\end{equation}
The positive cone of $F$ can be expressed as the union of the nef cones of the models $\dots, F^{\vee}_1, F_0, F_1, \dots$:

\begin{equation}
\cdots, \Cone(\alpha^{\vee}_2,\alpha^{\vee}_1), \Cone(\alpha^{\vee}_1,\alpha_1),\Cone(\alpha_1,\alpha_2),\cdots  
\end{equation} 
\end{Thm}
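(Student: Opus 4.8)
The plan is to reduce the statement to lattice theory on $H^{2}(F,\mathbf{Z})$ and then read off the birational geometry via Markman's description of the movable cone together with the Torelli theorem for hyper-Kähler manifolds. By the theorem of Beauville and Donagi, $F=F(Y)$ is a projective hyper-Kähler fourfold of $\mathrm{K3}^{[2]}$-type, and the Abel--Jacobi correspondence induces a Hodge isometry $H^{4}(Y,\mathbf{Z})_{\mathrm{prim}}(-1)\cong H^{2}(F,\mathbf{Z})_{\mathrm{prim}}$. The hypothesis $H^{2,2}(Y,\mathbf{Z})=\mathbf{Z}h^{2}\oplus\mathbf{Z}T$ forces $\rho(F)=2$; combining the Plücker class $g$ (with $q_{F}(g)=6$) with the intersection numbers of the cubic scroll on $Y$ (namely $h^{2}\cdot h^{2}=3$, $h^{2}\cdot T=3$, $T\cdot T=7$, see \cite{HassettTschinkel2010}) one computes the Gram matrix of $\NS(F)=\mathbf{Z}g\oplus\mathbf{Z}\mu$ with respect to $q_{F}$. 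One checks that $\NS(F)$ has signature $(1,1)$ and that its discriminant is not a square, so that the two isotropic rays of $q_{F}$ inside $\NS(F)_{\mathbf{R}}$ are irrational; this irrationality is ultimately what forces the flop sequence to be infinite.

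Next I would analyse the movable cone and the wall set. Using Markman's results (and the classification of wall divisors on $\mathrm{K3}^{[2]}$-type manifolds, cf.\ \cite{Mongardi1}, \cite{BayerMacri2014}) I would first verify that $\NS(F)$ contains no class of a prime exceptional divisor, so that $\overline{\Mov(F)}=\overline{\mathscr{BK}_{F}}\cap\NS(F)_{\mathbf{R}}$ coincides with the closure of the positive cone $\mathscr{C}_{F}$. The walls of the decomposition of $\mathscr{C}_{F}$ into nef cones of birational hyper-Kähler models are then the hyperplanes $v^{\perp}$ with $v\in\NS(F)$ a wall divisor (equivalently $q_{F}(v)=-2$, or $q_{F}(v)=-10$ and $v$ of divisibility $2$); since none of these is exceptional, each such wall is a flopping wall contracting an embedded $\mathbf{P}^{2}$, and crossing it is the Mukai flop along that plane. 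This is exactly the role of the cubic scroll: $T\subset Y$ produces an explicit $\mathbf{P}^{2}\subset F_{0}$ whose dual wall bounds $\Nef(F_{0})$. Because $q_{F}$ is indefinite of rank $2$ on $\NS(F)$, the equation $q_{F}(v)=-2$ has infinitely many solutions, the corresponding rays accumulate only at the two irrational isotropic rays of $\mathscr{C}_{F}$, and so the chamber decomposition is locally finite with infinitely many chambers.

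Ordering the walls along $\overline{\mathscr{C}_{F}}$, the chambers between consecutive walls are the nef cones $\Nef(F_{j})$ of the models obtained from $F_{0}$ by the successive Mukai flops; this yields the bi-infinite sequence $\cdots F^{\vee}_{1}\DashedArrow F_{0}\DashedArrow F_{1}\DashedArrow\cdots$ and the equality of the positive cone with the union of the $\Nef(F_{j})$, which is the last assertion of the theorem. To obtain the isomorphisms $F_{0}\cong F_{2}$ and $F^{\vee}_{1}\cong F_{1}$, I would exhibit a reflection $r$ in a wall class lying in the monodromy group $\mathrm{Mon}^{2}_{\mathrm{Hdg}}(F)$: it is a Hodge isometry of $H^{2}(F,\mathbf{Z})$ carrying $\Nef(F_{j})$ onto $\Nef(F_{j+2})$, hence by the Torelli theorem it is induced by an isomorphism $F_{j}\xrightarrow{\sim}F_{j+2}$; the two isomorphism classes appearing in the statement are the two orbits of chambers under the group generated by two consecutive wall reflections.

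The real content is the middle step: identifying precisely which negative classes of $\NS(F)$ are wall divisors, checking that each wall is a genuine Mukai-flop wall (an honest $\mathbf{P}^{2}$ is contracted, not a divisor and not a different flopping locus), and pinning down the first such $\mathbf{P}^{2}$ in terms of $T$. In \cite{HassettTschinkel2010} this is carried out by a direct study of the lines on $Y$; with the later machinery of Markman and Bayer--Macr\`i it becomes wall-crossing bookkeeping for $\mathrm{K3}^{[2]}$-type lattices, but the descent from lattice automorphisms to biregular isomorphisms and the local finiteness of the wall set at the irrational boundary remain the two delicate points.
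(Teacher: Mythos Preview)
The paper does not prove this theorem at all: it is quoted verbatim from \cite{HassettTschinkel2010} and immediately followed by the disclaimer ``We refer the reader to \cite{HassettTschinkel2010} for all the geometric details of this example (which is quite intricate).'' So there is nothing in the present paper to compare your argument against.

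That said, your sketch is a coherent \emph{post hoc} reorganisation of the Hassett--Tschinkel result through later machinery (Markman's movable-cone theorem, the classification of wall divisors in $\mathrm{K3}^{[2]}$-type, global Torelli), and you correctly flag at the end that the original 2010 proof proceeds instead by a direct geometric analysis of the lines on $Y$ and of the scroll $T$. The trade-off is the one you indicate: the lattice/wall-crossing route makes the infinitude of chambers and the two-orbit structure transparent (irrational isotropic rays, reflections in $\mathrm{Mon}^{2}_{\mathrm{Hdg}}$), but still owes a geometric debt at the step where one must identify the actual flopping centre as a specific Lagrangian $\mathbf{P}^{2}$ coming from $T$ rather than just an abstract MBM locus. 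A minor point: the paper (and Hassett--Tschinkel) records $q_{F}(g)=q_{F}(g,\tau)=6$, $q_{F}(\tau)=2$, giving $\NS(F)$ discriminant $-24$, which is indeed non-square and confirms your irrationality claim; you may want to use $\tau$ rather than the undefined $\mu$ for consistency with the surrounding text.
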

We refer the reader to \cite{HassettTschinkel2010} for all the geometric details of this example (which is quite intricate). We explain below the terminology of Theorem \ref{Thm:HassettTschinkel}. 
\vspace{0.2cm}

The birational geometry of $F$ is governed by the two planes $P, P^{\vee}$ and the surface $S$ contained in $F$ (cf.\ \cite[Proposition 4.1]{HassettTschinkel2010}). Let $f_1=f_P\colon F \DashedArrow F_1$ be the Mukai flop of $F$ along $P$, and $f_2=f_{P\cup S}\colon F \DashedArrow F_{2}$ the birational map obtained by flopping the plane $P\subset F$, first, and after $S_1\subset F_1$, where $S_1$ is the strict transform of the surface $S$ via $f_1$ (we can perform this Mukai flop because $S_1$ is a plane in $F_1$). One has $F_{2}\cong F$ (cf.\ \cite[Theorem 6.2]{HassettTschinkel2010}), so that $f_2$ gives a birational self map of order 2, i.e.\ an involution. Analogously we define
$f_1^{\vee} \colon F \DashedArrow F_1^{\vee}$ to be the Mukai flop of $F$ along $P^{\vee}$, and $f_2^{\vee}\colon F \DashedArrow F_2^{\vee}$ the birational map obtained by flopping the plane $P^{\vee}\subset F$, first, and after $S_1^{\vee}\subset F_1^{\vee}$, where $S_1^{\vee}$ is the strict transform of the surface $S$ via $f_1^{\vee}$ (we can perform this flop too because $S_1^{\vee}$ is a plane in $F_1^{\vee}$). Also, $f_1^{\vee}$ gives a birational involution. The composition $f_2\circ f_2^{\vee}$ is an element of $\mathrm{Bir}(F)$ of infinite order. The group $\mathrm{Bir}(F)$ is generated by $f_2^{\vee}$ and $f_2$, and any log MMP for pairs of the type $(F, D)$, with $[D]\in \mathrm{Mov}(F)$ starts by flopping first either $P$ or $P^{\vee}$, because the two extremal rays of the Mori cone of $F$ are generated by the classes of a line in $P$ and $P^{\vee}$.  To conclude, we define the birational map $f_3 \colon F \DashedArrow F_3$ as the composition of $f_2$ with the Mukai flop of $F_2$ along the strict transform $P^{\vee}_2$ via $f_2$ of $P^{\vee}$ (which is a plane), and the birational map $f_3^{\vee} \colon F \DashedArrow F_3^{\vee}$ as the composition of $f_2^{\vee}$ with the Mukai flop of $F_2^{\vee}$ along the strict transform $P_2$ via $f_2^{\vee}$ of $P$ (which also, in this case, is a plane).
\vspace{0.2cm}

Now, let $\tau$ be $\alpha([T])$,  where  $\alpha \colon H^4(Y,\mathbf{Z})\to H^2(F,\mathbf{Z})$ is the Abel-Jacobi map, $[T]$ is the class of the cubic scroll $T$ contained in $X$, and $g=\alpha(h^2)$, where $h^2$ is the square of the hyperplane class of the cubic fourfold. The pseudo-effective cone of $F$ is $\overline{\mathrm{Eff}(F)}=\mathrm{cone}\left(g-\left(3-\sqrt6\right)\tau,\left(3+\sqrt6\right)\tau-g\right)$. Note that we do not have prime exceptional divisors on $F$, hence $\mathrm{Big}(F)=\mathscr{C}_F\cap N^1(F)_{\mathbf{R}}=\mathrm{Mov}(F)=\mathrm{Eff}(F)$. One has $q_F(g)=q_F(g,\tau)=6$, and $q_F(\tau)=2$. Consider the following table of classes in $N^1(F)_{\mathbf{R}}$.
\vspace{0.2cm}

\begin{center}
\begin{tabular}{l|l|l}\label{table:classes}
 $\alpha_3^{\vee}=39\tau-7g$ & $\rho_3^{\vee}=8\tau-3/2\cdot g$ &  $f_{2,*}^{\vee}(\rho_3^{\vee})=\text{dual of the class of a line in } P_2^{\vee}$ \\ 
$\alpha_2^{\vee}=9\tau-g$ & $\rho_2^{\vee}=2\tau-g/2$  &  $f_{1,*}^{\vee}(\rho_2^{\vee})=\text{ dual of the class of a line in } S_1^{\vee}$   \\  
 $\alpha_1^{\vee}=g+3\tau$ & $\rho_1^{\vee}=\tau-g/2$  &  $\rho_1^{\vee}=$\text{ dual of the class of a line in } $P^{\vee}$\\  
 $\alpha_1=7g-3\tau$ & $\rho_1=3/2\cdot g-\tau$ & $\rho_1=$\text{ dual of the class of line in  } $P$ \\
 $\alpha_2=17g-9\tau$ & $\rho_2=7/2\cdot g-2\tau$ &  $f_{1,*}(\rho_2)=\text{ dual of the class of a line in } S_1$\\
 $\alpha_3=71g-39\tau$ & $\rho_3=29/2\cdot g- 8 \tau$ &  $f_{2,*}(\rho_3)=\text{ dual of the class of a line in } P_2$
\end{tabular}
\end{center}
\vspace{0.2cm}

 With the above notation, we obtain the following decomposition of the big cone into stability chambers.
\begin{Prop}
 We have for $\mathrm{Big}(F)$ six stability chambers. The first is  $\mathrm{Amp}(F)$, the ample cone of $F$. In the second, $\mathrm{SC}_1$, the augmented base locus consists of the plane $P$. In the third, $\mathrm{SC}_1^{\vee}$, the augmented base locus is given by the plane $P^{\vee}$. In the fourth, $\mathrm{SC}_2$, the augmented base locus is given by $P\cup S$, and in the fifth, $\mathrm{SC}_2^{\vee}$, the augmented base locus is given by the union $P^{\vee} \cup S$. The sixth stability chamber, which is not a convex cone, is denoted by $\mathrm{SC}_3$, and there the augmented base locus is given by the union $P\cup P^{\vee} \cup S$. 
 \begin{proof}
    The ample cone $\mathrm{Amp}(F)$ is the stability chamber where the augmented base locus is empty, which we denote by $\mathrm{SC}_0$. A log MMP for any divisor lying in $f_P^{*}(\mathrm{Amp}(F_1))$ is given by $f_P$. Then, the augmented base locus of any class lying in $f_P^{*}(\mathrm{Amp}(F_1))$ is given by the plane $P$. Note that the rays $\alpha_1$ and $\alpha_2$ are part of the unstable locus (i.e.\ the regions of the big cone where $\mathbf{B}_{-} \subsetneq \mathbf{B}_+$). Then, $\mathrm{SC}_1=f_P^{*}(\mathrm{Amp}(F_1))\cup \alpha_1$ gives a second stability chamber. Similarly, the augmented base locus of any class lying in $f_P^{*}(\mathrm{Amp}(F_1^{\vee}))$ is given by $P^{\vee}$, and so $\mathrm{SC}_1^{\vee}=f_P^{*}(\mathrm{Amp}(F_1^{\vee}))\cup \alpha_1^{\vee}$ gives the third stability chamber. The augmented base locus of any class lying in $f_{P\cup S}^{*}(\mathrm{Amp}( F_2))$ is given by $P\cup S$, because $f_{P\cup S}$ gives a log MMP for any such class. It follows that $\mathrm{SC}_{2}=f_{P\cup S}^{*}(\mathrm{Amp}( F_2)) \cup \alpha_2$ is a stability chamber. Analogously, we see that the augmented base locus of any class lying in $f_{P^{\vee}\cup S}^{*}(\mathrm{Amp}( F_2^{\vee}))$ is given by $P^{\vee}\cup S$, and hence $\mathrm{SC}_2^{\vee}=f_{P^{\vee}\cup S}^{*}(\mathrm{Amp}( F_2^{\vee}))\cup \alpha_2^{\vee}$ is another stability chamber. To conclude, we see that the augmented base locus of any class lying in $f^{*}_{P\cup S \cup P^{\vee}}(\mathrm{Amp}(F_3))$ is given by $P\cup S \cup P^{\vee}$, because $f_{P\cup S \cup P^{\vee}}$ gives a log MMP for these classes. But the same is true for any element lying in $f^{*}_{P^{\vee}\cup S \cup P}(\mathrm{Amp}(F_3^{\vee}))$. After this step, the augmented base locus stabilizes "on both sides" of the big cone (i.e.\ on the left of the ray $\alpha_3^{\vee}$, and on the right of $\alpha_3$), and it is always equal to $P^{\vee}\cup S\cup P$. The sixth chamber, denoted by $\mathrm{SC}_3$, is given by $\mathrm{Big}(F)$ minus the interior of the convex cone spanned by $\alpha_3$ and $\alpha_3^{\vee}$. In particular, we observe that it is not a convex subcone of $\mathrm{Big}(F)$. This is consistent with Lemma \ref{lem:convexity}. Indeed, for example, if $\gamma$ is any class lying in the interior of $\mathrm{SC_3}$ and $l^{\vee}$ (resp.\ $l$) is any line lying in $P^{\vee}$ (resp.\ $P$), we see that either $\gamma \cdot l^{\vee}>0$, or\ $\gamma \cdot l>0$. But $l^{\vee} \subset \mathbf{B}_+(\gamma)=\mathbf{B}_{-}(\gamma)$ (resp.\ $l \subset \mathbf{B}_+(\gamma)=\mathbf{B}_{-}(\gamma)$). In particular, to characterize numerically the asymptotic base loci, we cannot avoid considering the birational models of the variety. To conclude, we observe that unstable locus of $\mathrm{Big}(F)$ is given by the union of rays $\alpha_1\cup \alpha_1^{\vee}\cup \alpha_2 \cup \alpha_2^{\vee} \cup \alpha_3 \cup \alpha_3^{\vee}$.
 \end{proof}
\end{Prop}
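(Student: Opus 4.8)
The plan is to exploit that $F$ carries no prime exceptional divisors, so that $\mathrm{Big}(F)=\mathrm{Mov}(F)=\mathrm{Eff}(F)$, every big divisor on $F$ equals its own positive part, and by Proposition \ref{Prop:B+MBM} one has $\mathbf{B}_+(D)=\bigcup_{C\in\mathrm{MBM}(D)}\mathrm{Supp}(C)$, the union being over the rational curves flopped, or contracted at the end, along a log MMP for $(F,\epsilon D)$. By item (1) of Theorem \ref{Thm:baselociHK} every irreducible component of $\mathbf{B}_+(D)$ is an algebraically coisotropic surface, so the only possible components are the three Mukai-flop surfaces $P$, $P^{\vee}$, $S$ of \cite[Proposition 4.1]{HassettTschinkel2010}. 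I would then walk through the chamber decomposition of $\mathrm{Mov}(F)$ given by Theorem \ref{Thm:HassettTschinkel}: for a class in the interior of each nef-chamber I read off the corresponding string of Mukai flops as the log MMP for $(F,\epsilon D)$, and record which of $P$, $S$, $P^{\vee}$ gets flopped, using the dictionary between the extremal rays of the $\overline{\mathrm{NE}}$ of the models and the dual classes $\rho_i,\rho_i^{\vee}$ in the table.

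Concretely: for $D\in\mathrm{Amp}(F)=\mathrm{int}(\mathrm{Nef}(F))$ the MMP is trivial and $\mathbf{B}_+(D)=\emptyset$. For $D$ in $\mathrm{int}\,\big(f_P^{*}\mathrm{Nef}(F_1)\big)=\mathrm{int}\,\Cone(\alpha_1,\alpha_2)$ the MMP is the single flop $f_P$, after which $f_{P,*}D$ is ample on $F_1$, so $\mathrm{MBM}(D)$ is the family of lines of $P$ (dual to $\rho_1$) and $\mathbf{B}_+(D)=P$; this is $\mathrm{SC}_1$. On the wall $\alpha_1$ the divisor is nef on $F$ with the lines of $P$ orthogonal to it, so $\mathbf{B}_+(D)=P$ but $\mathbf{B}_-(D)=\emptyset$, which places $\alpha_1$ in $\mathrm{SC}_1$ and, by Proposition \ref{Prop:unstabledivisors}, shows $D$ is unstable there; symmetrically one gets $\mathrm{SC}_1^{\vee}$ with $\mathbf{B}_+(D)=P^{\vee}$. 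For $D$ in $\mathrm{int}\,\Cone(\alpha_2,\alpha_3)$ the MMP is $f_{P\cup S}$, i.e.\ flop $P$ then flop the strict transform $S_1$ of $S$; thus $\mathrm{MBM}(D)$ consists of the lines of $P$ together with the curves of $F_1$ sweeping out $S_1$, whose strict transforms on $F$ sweep out $S$, so $\mathbf{B}_+(D)=P\cup S$ — this is $\mathrm{SC}_2$. On $\alpha_2$ one has, by Corollary \ref{CorToThm:stablebaselocus}, $\mathbf{B}_-(D)=\mathrm{Ind}(f_P)=P\subsetneq P\cup S=\mathbf{B}_+(D)$, so $\alpha_2\in\mathrm{SC}_2$ and $D$ is unstable; symmetrically $\mathrm{SC}_2^{\vee}$.

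It remains to handle classes lying beyond $\alpha_3$ and $\alpha_3^{\vee}$. For $D$ with class in $\Cone\big(\alpha_3,\,g-(3-\sqrt6)\tau\big)$ I run the log MMP $f_3=f_{P\cup S\cup P^{\vee}}$ and all subsequent flops; by Theorem \ref{Thm:HassettTschinkel} the only centers of Mukai flops occurring in the entire birational geometry of $F$ are (strict transforms of) $P$, $S$, $P^{\vee}$, so $\mathbf{B}_+(D)$ stabilises to $P\cup S\cup P^{\vee}$, and symmetrically on the tail beyond $\alpha_3^{\vee}$. This identifies the last chamber $\mathrm{SC}_3=\mathrm{Big}(F)\setminus\mathrm{int}\,\Cone(\alpha_3,\alpha_3^{\vee})$, a disjoint union of two tails, hence not a convex subcone; this is consistent with Lemma \ref{lem:convexity}, since for $\gamma$ in the tail near $g-(3-\sqrt6)\tau$ a line $l^{\vee}\subset P^{\vee}$ satisfies $l^{\vee}\subset\mathbf{B}_+(\gamma)$ while $\gamma\cdot l^{\vee}=q_F(\gamma,\rho_1^{\vee})>0$ (and symmetrically with a line $l\subset P$ on the opposite tail). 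Finally, on every wall $\alpha_i,\alpha_i^{\vee}$ with $i\ge 4$ the MMP bringing $D$ to a nef model already flops all three surfaces, so $\mathbf{B}_-(D)=\mathrm{Ind}(f)=P\cup S\cup P^{\vee}=\mathbf{B}_+(D)$ and $D$ is stable there; together with the instabilities on $\alpha_1,\alpha_1^{\vee},\alpha_2,\alpha_2^{\vee}$ and on $\alpha_3,\alpha_3^{\vee}$ (where $\mathbf{B}_-(D)=P\cup S\subsetneq P\cup S\cup P^{\vee}$), the unstable locus of $\mathrm{Big}(F)$ is exactly $\alpha_1\cup\alpha_1^{\vee}\cup\alpha_2\cup\alpha_2^{\vee}\cup\alpha_3\cup\alpha_3^{\vee}$.

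The hard part is the bookkeeping behind the third paragraph: one must be sure that the log MMP for $(F,\epsilon D)$ is precisely the string of Mukai flops of Theorem \ref{Thm:HassettTschinkel}, with centers cycling through the strict transforms of $P$, $S$, $P^{\vee}$, and in particular that no additional algebraically coisotropic surface ever arises as a flopping or contracting center once $\mathrm{SC}_2$ and $\mathrm{SC}_2^{\vee}$ have been passed. This is exactly the content of Hassett and Tschinkel's description of $\mathrm{Bir}(F)$ and of the extremal rays of the Mori cones of all the models (encoded in the table of classes $\alpha_i,\rho_i$ and $\alpha_i^{\vee},\rho_i^{\vee}$); granting that input, the rest is a direct application of Propositions \ref{Prop:B+MBM} and \ref{Prop:unstabledivisors} together with Corollary \ref{CorToThm:stablebaselocus}.
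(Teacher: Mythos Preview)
Your proposal is correct and follows essentially the same route as the paper: both compute $\mathbf{B}_+$ via Proposition \ref{Prop:B+MBM} as the union of the centers flopped along the Hassett--Tschinkel log MMP, walk through the chamber decomposition of $\mathrm{Mov}(F)$ from Theorem \ref{Thm:HassettTschinkel}, and observe that the locus stabilises to $P\cup S\cup P^\vee$ beyond $\alpha_3,\alpha_3^\vee$. One small quibble: your early claim that algebraic coisotropy alone forces the components of $\mathbf{B}_+(D)$ to lie among $P,P^\vee,S$ is not justified (there could a priori be other coisotropic surfaces in $F$), but your argument does not actually rely on it --- as you correctly acknowledge in your final paragraph, the real input restricting the flopping centers is Hassett--Tschinkel's description, and with that granted the rest goes through exactly as in the paper.
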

\begin{Rem}
 Let $F$ be as in Theorem \ref{Thm:HassettTschinkel}. According to Theorem \ref{Thm:duality}, we provide an example of a class in $N_1(F)_{\mathbf{R}}$ which on $F_2$ is represented by a rational curve moving in a family sweeping out a subvariety of $F_2$, which is the strict transform of a $2$-dimensional subvariety of $F$. Following the notation of the table above, consider the class $\rho_3\in N_1(F)_{\mathbf{R}}$, and $f_{2,*}(\rho_3) \in  N_1(F_2)_{\mathbf{R}}$. Then, the class dual to $f_{2,*}(\rho_3)$ is represented by a line moving in a family sweeping out the lagrangian plane $f_{2,*}(P^{\vee})$ (the strict transform of $P^{\vee}$ via $f_2$). The extremal rays of the Mori cone of $F$ are generated by $q_F(\rho_1,-)$, and $q_F(\rho^{\vee}_1,-)$, and an easy calculation shows $q_F(\rho_3,-)=\frac{5}{2}q_F(\rho_1^{\vee},-)+\frac{21}{2}q_F(\rho_1,-)$.
 
\end{Rem}
\begin{figure}[htbp]\label{picture}
  \centering

\definecolor{light-gray}{gray}{0.95}
  \begin{tikzpicture}[scale=0.7, x=1.6
cm,y=0.8cm]

       \node (l) at (-2,10.4) {$\alpha_1^\vee$};

       \node (l) at (2,10.3) {$\alpha_1$};

       \node (l) at (-3.7,10.4) {$\alpha_2^\vee$};

       \node (l) at (3.7,10.3) {$\alpha_2$};

       \node (l) at (-4.5,10.4) {$\alpha_3^\vee$};

       \node (l) at (4.5,10.3) {$\alpha_3$};

\draw [line width=0.5pt] (0,0) -- (5,10);

\draw [line width=0.5pt] (0,0) -- (-5,10);

\draw [MistyRose1, line width=0.5pt](0,0) -- (-2,10);

\draw [LightCyan1, line width=0.5pt] (0,0) -- (2,10);

  \path[fill=LightCyan1] (0,0) -- (-2,10) -- (2,10) -- cycle;
  \path [fill=MistyRose1](0,0) -- (-2,10);

  \path[fill=DarkOliveGreen1] (0,0) -- (2,10) -- (3.7,10) -- cycle;

    \path[fill=MistyRose1] (0,0) -- (-2,10) -- (-3.7,10) -- cycle;

   \path[fill=LemonChiffon1] (0,0) -- (-3.7,10) -- (-4.5,10) -- cycle;

          \path[fill=Cornsilk1] (0,0) -- (3.7,10) -- (4.5,10) -- cycle;

          \path[fill=Snow3] (0,0) -- (-4.5,10) -- (-5,10) -- cycle;

             \path[fill=Snow3] (0,0) -- (4.5,10) -- (5,10) -- cycle;

\draw [LemonChiffon1, line width=0.5pt] (0,0) -- (-3.7,10);
\draw [AntiqueWhite1, line width=0.5pt] (0,0) -- (3.7,10);

\draw [Snow3, line width=0.5pt] (0,0) -- (-4.5,10);
\draw [Snow3, line width=0.5pt] (0,0) -- (4.5,10);

       \node (sigma0) at (0,6) {\small $\mathrm{SC}_0$};
        \node (sigma1) at (2.4,8) {\small $\mathrm{SC}_1$};
          \node (sigma1dual) at (-2.4,8) {\small $\mathrm{SC}_1^{\vee}$};

          \node (pt1dual) at (-2.5,3){\small $\mathrm{SC}_2^{\vee}$};
           \node (pt1) at (2.5,3){\small $\mathrm{SC}_2$};

           \node (pt) at (2,-1){\small $\mathrm{SC}_3$};

           \draw [->] (pt1dual) to[bend left] (-3.3,8);

           \draw [->] (pt1) to[bend right] (3.3,8);

              \draw [->] (pt) to[out=-150,in=270] (-4.5,9.5);

           \draw [->] (pt) to[bend right] (4.5,9.5);

       \fill(0,0) circle (2 pt);
       \node (o) at (-0.3,-0.3) {$0$};
  \end{tikzpicture}
  \caption{Representation of the stability chambers on $F$}
  \label{fig:cones}
\end{figure}

\begin{Rem}\label{Morifinerthanstabchambers}
Notice that in this example the decomposition of $\mathrm{Eff}(F)=\mathrm{Big}(F)$ into Mori chambers is strictly finer than the one into stability chambers. Indeed, while any Mori chamber is equal to $f^{-1}(\mathrm{Nef}(F'))$ for some birational model $F'$ of $F$ so that we have infinitely many ones, the stability chambers are six.
\end{Rem}

We would like to conclude this section with a result which is interesting on its own.
\vspace{0.2cm}

If $X$ is a projective HK manifold carrying a birational self-map of infinite order, $\mathrm{Bir}(X)$ is an infinite group. This happens for instance in the above example. Indeed, as it is proven in \cite{HassettTschinkel2010}, with the same notation of above, the map $f_{P\cup S}$ gives an involution $\iota \in \mathrm{Bir}(X)$, and $f_{P^{\vee} \cup S}$ induces a second involution $\iota^{\vee} \in \mathrm{Bir}(X)$. An element of infinite order is then $\iota \circ \iota^{\vee}$ (but also $\iota^{\vee}\circ \iota$). 
We below show that also the converse is true, using a classical result of Burnside of representation theory, and a result of Cattaneo-Fu.

\begin{Thm}[\cite{Burn}, Main Theorem]\label{burnthm}
Let $G$ be a subgroup of $\mathrm{GL}(n,\mathbf{C})$. Assume that there exists a positive integer $d$
such that any element of $G$ has order at most $d$. Then $G$ is a finite group.
\end{Thm}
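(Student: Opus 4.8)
The plan is to prove Theorem~\ref{burnthm} by induction on $n$, first reducing the general case to the case in which $G$ acts irreducibly on $V=\mathbf{C}^{n}$, and then exploiting the crucial consequence of the boundedness hypothesis, namely that the set of traces $\{\Tr(g):g\in G\}$ is \emph{finite}.

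The two elementary inputs I would record at the outset are the following. Since every $g\in G$ satisfies $g^{m}=\id$ for some $m\le d$, its minimal polynomial divides $x^{m}-1$, which is separable in characteristic zero; hence each $g$ is diagonalizable and its eigenvalues are roots of unity of order at most $d$. Letting $\mu\subset\mathbf{C}$ be the (finite) set of all $k$-th roots of unity with $1\le k\le d$, every eigenvalue of every element of $G$ lies in $\mu$, so $\Tr(g)$ lies in the finite set $T:=\{\zeta_{1}+\dots+\zeta_{n}:\zeta_{i}\in\mu\}$ for all $g\in G$. The second input is that a \emph{unipotent} element of finite order equals $\id$: if $g=\id+N$ with $N$ nilpotent and $g^{m}=\id$, expanding $(\id+N)^{m}$ and inspecting the lowest-degree nonzero term forces $N=0$. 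With these in hand, the reduction goes as follows. If $V$ is reducible, pick a $G$-invariant subspace $0\neq W\subsetneq V$ and consider the homomorphism $G\to\GL(W)\times\GL(V/W)$, $g\mapsto(g|_{W},g|_{V/W})$. Its kernel consists of elements $\id+\phi$ with $\phi\colon V/W\to W$, i.e.\ unipotent elements of finite order, hence trivial; so $G$ embeds into $\GL(W)\times\GL(V/W)$. Both projections of $G$ are subgroups of general linear groups of strictly smaller dimension in which every element still has order at most $d$, so by induction they are finite, whence $G$ is finite. The base case $n=1$ is immediate, since a subgroup of $\mathbf{C}^{\times}$ in which every element has order at most $d$ is contained in the group of $d!$-th roots of unity.

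It remains to treat the irreducible case, which is the heart of the matter. When $G$ acts irreducibly, Burnside's theorem on algebras (equivalently, the Jacobson density theorem over the algebraically closed field $\mathbf{C}$) gives $A:=\operatorname{span}_{\mathbf{C}}(G)=\operatorname{End}(V)=M_{n}(\mathbf{C})$. Choose $g_{1},\dots,g_{n^{2}}\in G$ forming a $\mathbf{C}$-basis of $M_{n}(\mathbf{C})$. Since the trace pairing $(x,y)\mapsto\Tr(xy)$ on $M_{n}(\mathbf{C})$ is non-degenerate, the linear map $M_{n}(\mathbf{C})\to\mathbf{C}^{n^{2}}$, $x\mapsto(\Tr(g_{1}x),\dots,\Tr(g_{n^{2}}x))$, is injective. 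But for $g\in G$ each coordinate $\Tr(g_{i}g)$ lies in the finite set $T$, because $g_{i}g\in G$; hence this injective map sends $G$ into a set of size at most $|T|^{n^{2}}$, and therefore $|G|\le|T|^{n^{2}}<\infty$.

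The step I expect to be the main obstacle is precisely the irreducible case, and within it the density statement $\operatorname{span}_{\mathbf{C}}(G)=M_{n}(\mathbf{C})$: this is the one genuinely non-elementary ingredient, everything else being bookkeeping with roots of unity and nilpotents. One may either invoke Burnside's/Jacobson's density theorem as a black box, or, for a self-contained route, work directly with the radical of the trace form on $A=\operatorname{span}(G)$, showing it is a two-sided ideal whose elements have all power sums zero (hence are nilpotent) and ruling it out using that $G$ is a group of semisimple matrices spanning $A$. In either case, once irreducibility is combined with the finiteness of the trace set, the counting bound closes the argument, and the induction then delivers the general statement.
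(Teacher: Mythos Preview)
Your proof is correct and is essentially the classical argument due to Burnside himself. However, the paper does not give its own proof of this statement: Theorem~\ref{burnthm} is merely quoted from \cite{Burn} as an external input and then applied as a black box in the subsequent Proposition. So there is nothing to compare against; you have supplied a complete proof where the paper intentionally cites one.
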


\begin{Prop}
Suppose that $\mathrm{Bir}(X)$ is infinite. Then $\mathrm{Bir}(X)$ contains an element of infinite order.
\begin{proof}
By \cite[Theorem 7.1]{CattaneoFu2019} the number of conjugacy classes of finite subgroups of $\mathrm{Bir}(X)$ is finite. Then, there exists an integer $N$ such that any element of finite order in $\mathrm{Bir}(X)$ has at most order $N$. Suppose by contradiction that $\mathrm{Bir}(X)$ does not contain elements of infinite order. Then the image of the natural representation
\[
\rho \colon \mathrm{Bir(X)} \to \mathrm{GL}(N^1(X)_{\mathbf{C}})
\]
is a group of finite order, by Theorem \ref{burnthm}. But then $\mathrm{Bir}(X)$ is finite by \cite[Corollary 2.6]{Ogu14}, and this is a contradiction.
\end{proof}
\end{Prop}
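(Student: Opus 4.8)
The plan is to argue by contradiction: assume that every element of $\mathrm{Bir}(X)$ has finite order, and deduce that $\mathrm{Bir}(X)$ itself is finite, contradicting the hypothesis. The strategy rests on three inputs, two of which are quoted from the literature: a uniform bound on the orders of torsion elements (from the Cattaneo--Fu finiteness theorem \cite{CattaneoFu2019}), Burnside's theorem on torsion linear groups of bounded exponent (Theorem \ref{burnthm}), and the finiteness of the kernel of the natural action of $\mathrm{Bir}(X)$ on $N^1(X)$ (Oguiso \cite{Ogu14}).

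First I would extract a uniform exponent bound. By \cite[Theorem 7.1]{CattaneoFu2019} there are only finitely many conjugacy classes of finite subgroups of $\mathrm{Bir}(X)$; fix representatives $G_1,\dots,G_r$ and set $N:=\max_i |G_i|$. Any element $g\in\mathrm{Bir}(X)$ of finite order generates a finite cyclic subgroup $\langle g\rangle$, which is conjugate to a subgroup of some $G_i$, so $\ord(g)\le N$. Thus, under the contradiction hypothesis, \emph{every} element of $\mathrm{Bir}(X)$ has order at most $N$. Next I would feed this into the natural representation $\rho\colon \mathrm{Bir}(X)\to \GL(N^1(X)_{\mathbf{C}})$ given by pullback on numerical divisor classes: for each $g$ the order of $\rho(g)$ divides $\ord(g)$, hence is $\le N$, so $\rho(\mathrm{Bir}(X))$ is a subgroup of $\GL(N^1(X)_{\mathbf{C}})$ of bounded exponent, and Theorem \ref{burnthm} forces $\im\rho$ to be finite. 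Finally, since $\Ker\rho$ is finite by \cite[Corollary 2.6]{Ogu14}, and $\im\rho$ is finite by the previous step, $\mathrm{Bir}(X)$ is finite, contradicting the assumption that it is infinite; hence $\mathrm{Bir}(X)$ contains an element of infinite order.

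I do not expect a serious obstacle here: once the two black-box results are invoked, the argument is formal. The only point needing a moment of care is the transition from ``finitely many conjugacy classes of finite subgroups'' to ``bounded exponent on torsion elements'', which is immediate because each torsion $g$ already lies in the finite subgroup $\langle g\rangle$, hence in a conjugate of one of the finitely many $G_i$. One should also note that $\rho$ need not be injective for Burnside to apply — only the bounded exponent of the image matters — which is exactly why the separate finiteness of $\Ker\rho$ is needed to close the argument.
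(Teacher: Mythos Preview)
Your proposal is correct and follows essentially the same route as the paper's proof: bounded exponent on torsion via Cattaneo--Fu, then Burnside on the image of $\rho$, then Oguiso's finiteness of the kernel to conclude. The only difference is cosmetic---you spell out explicitly the passage from finitely many conjugacy classes to a uniform order bound, and the extension-by-finite-kernel step, both of which the paper leaves implicit.
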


\subsection{\large \itshape A non-equidimensional augmented base locus}

Let $S$ be a projective K3 surface carrying a smooth rational curve $C \cong \mathbf{P}^1$. Consider the $n$-plane $\mathbf{P}^n\cong C^{[n]}\subset  S^{[n]} $. Let $\mathrm{cont} \colon S\to S'$ be the morphism contracting the curve $C$. We have an induced morphism $\mathrm{cont}^{(n)} \colon S^{(n)}\to S'^{(n)}$, contracting $C^{(n)}$. The $n$-plane $C^{[n]}$ is not contracted by the Hilbert-Chow morphism $\pi\colon S^{[n]}\to S^{(n)}$, hence $C^{[n]} \not\subset E$, where $E$ is the exceptional divisor coming from $\pi$. Let $D$ be the wall divisor related to $C^{[n]}$. Then, if $A$ is any ample, integral, Cartier divisor on $S'^{(n)}$, we have $D':=\left(\pi\circ \mathrm{cont}^{(n)}\right)^{*}(A) \in  D^{\perp} \cap E^{\perp}$, and $E$, $C^{[n]}$ are two irreducible components of $\mathbf{B}_+(D')$ of different dimension.

\subsection{\large \itshape Flopping contractions after Bayer-Macrì}

All the examples discussed above can be obtained as moduli spaces of Bridgeland stable sheaves on a K3 surface $S$. The birational geometry of these spaces is governed by a chamber and wall decomposition on a connected component $\Stab^\dagger(S)$ of the space of stability conditions on $\mathrm{D}^b(S)$. We have shown that the decomposition in stability chambers can be read in terms of the contraction loci, hence the decomposition of the big cone into stability chambers is encoded in terms of wall-crossing in $\Stab^\dagger(S)$. To mention one consequence of such decomposition we refer to \cite[Examples 14.3-14.4, Remark 14.5]{BayerMacri2014} where examples of contraction loci (and hence augmented base loci) with several irreducible components or arbitrarily many connected components are given.

\section{\bf \scshape Final Remarks and Some Questions}\label{Section8}
In this section, we ask two questions naturally arising from our work.

\subsection{\large \itshape Is the restricted base locus of divisors on HK manifolds Zariski-closed?}

In general, given a pseudo-effective divisor $D$ on a smooth projective variety $Y$, the restricted base locus $\mathbf{B}_-(D)$ is not Zariski closed. The first examples of non-Zariski closed restricted base loci were provided by John Leusieutre (cf.\ \cite{lesieutre}), and we do not know other examples of this pathology.
\vspace{0.2cm}

In the case of a projective HK manifold $X$, if  $\mathrm{Bir}(X)$ is finite, $\mathbf{B}_-(D)$ is Zariski closed for any pseudo-effective divisor $D$. In any case, if $D$ is big on $X$, as $\mathbf{B}_-(D)=\mathbf{B}(D)$, all big divisors have Zariski-closed restricted base locus. Hence, the only classes that might have a non-Zariski closed restricted base locus on HK manifolds, are the pseudo-effective but not big ones. In the example of Hassett and Tshinkel we observe that all pseudo-effective classes have a Zariski closed restricted base locus. Indeed, in that case, the only potential rays with a non-Zariski closed restricted base locus are the ones spanned by $\alpha=g-(3-\sqrt6)\tau$ and $\beta=(3+\sqrt6)\tau-g$. Using that $\mathbf{B}_-(D)=\cup_{m \in \mathbf{N}} \mathbf{B}_+(D+A_m)$, where $\{A_m\}_{m \in \mathbf{N}}$ is a sequence of ample divisors with $\lVert A_m \rVert \to 0$, and that $\mathbf{B}_+(\alpha+A_m)$ and $\mathbf{B}_+(\beta+A_m)$ stabilize for $m\gg 0$, we see that $\mathbf{B}_-(\beta)=\mathbf{B}_-(\alpha)=P\cup P^{\vee} \cup S$, so that all pseudo-effective divisors on $F$ have a Zariski-closed restricted base locus. Then, it is natural to ask the following question, which will be investigated by the authors in future works.

\begin{Quesone*}
Let $X$ be a projective HK manifold and $D$ any pseudo-effective divisor on $X$. Is $\mathbf{B}_-(D)$ Zariski closed?
\end{Quesone*}

\subsection{\large \itshape Is the Voisin filtration preserved under birational maps?}
Our results shed light on Conjecture \cite[Conjecture 0.4]{Voisin16} and also motivate the following question about the birational invariance of the Voisin filtration.
\begin{Questwo*}\label{Con:birationalinvarianceofVoisinfiltration}
Let $f: X\DashedArrow X'$ be a birational map between projective HK manifolds. Is it true that the induced isomorphism
\begin{equation}
    \mathrm{CH}_0(X)\cong \mathrm{CH}_0(X')
\end{equation}
preserves the filtration $S_\bullet$?
\end{Questwo*}
In the case of moduli space of Bridgeland stable sheaves on a K3 surface, the above question was answered affirmatively very recently in (cf.\ \cite[Corollary 3.3]{LiZhang2023}).

\section*{\bf \scshape Acknowledgments}
The first-named author would like to thank his doctoral advisors, Giovanni Mongardi and Gianluca Pacienza for their constant support. We would like to thank Daniele Agostini, Enrico Fatighenti, Enrica Floris, Annalisa Grossi, Christian Lehn, Claudio Onorati, Andrea Petracci, and Jieao Song for useful discussions. To conclude, we would like to thank the anonymous referee for his suggestions, which helped us to improve the exposition of the article. Part of this work was done during a visit of the first-named author at Sapienza Università di Roma. The visit was funded with the research grant “SEED PNR” of Simone Diverio, which we would like to thank. The second named author was partially supported by the SMWK research grant SAXAG. Both the authors were partially supported by the ERC Synergy Grant HyperK (Grant agreement ID: 854361).

\printbibliography

\end{document}